\providecommand{\U}[1]{\protect\rule{.1in}{.1in}}
\newtheorem{theorem}{Theorem}
\newtheorem{definition}[theorem]{Definition}
\newtheorem{example}[theorem]{Example}
\newtheorem{lemma}[theorem]{Lemma}
\newtheorem{remark}[theorem]{Remark}
\newenvironment{proof}[1][Proof]{\noindent \textbf{#1.} }{\  \rule{0.5em}{0.5em}}
\begin{document}

\title{{\Large Dynamic Programming Principle for Stochastic Recursive Optimal Control
Problem under }$G${\Large -framework}}
\author{Mingshang Hu \thanks{Qilu Institute of Finance, Shandong University, Jinan,
Shandong 250100, PR China. humingshang@sdu.edu.cn. Research supported by NSF
(No. 11201262, 11101242 and 11301068) and Shandong Province (No. BS2013SF020)}
\and Shaolin Ji\thanks{Qilu Institute of Finance, Shandong University, Jinan,
Shandong 250100, PR China. jsl@sdu.edu.cn (Corresponding author). Research
supported by NSF (No. 11171187, 11222110 and 11221061), Shandong Province
(No.JQ201202), Programme of Introducing Talents of Discipline to Universities
of China (No.B12023) and Program for New Century Excellent Talents in
University of China. Hu and Ji's research was partially supported by NSF (No.
10921101) and by the 111 Project (No. B12023) }}
\date{}
\maketitle

\textbf{Abstract}. In this paper, we study a stochastic recursive optimal
control problem in which the cost functional is described by the solution of a
backward stochastic differential equation driven by $G$-Brownian motion. Under
standard assumptions, we establish the dynamic programming principle and the
related fully nonlinear HJB equation in the framework of $G$-expectation.
Finally, we show that the value function is the viscosity solution of the
obtained HJB equation.

{\textbf{Key words}. }$G$-expectation, {backward stochastic differential
equations, stochastic recursive optimal control, robust control, dynamic
programming principle}

\textbf{AMS subject classifications.} 93E20, 60H10, 35K15

\addcontentsline{toc}{section}{\hspace*{1.8em}Abstract}

\section{Introduction}

It is well known that Duffie and Epstein \cite{DE} introduced a stochastic
differential recursive utility which corresponds to the solution of a
particular backward stochastic differential equation (BSDE). Thus the BSDE
point of view gives a simple formulation of recursive utilities (see
\cite{EPQ}). Since then, the classical stochastic optimal control problem is
generalized to a so called "stochastic recursive optimal control problem" in
which the cost functional is defined by the solution of BSDE. The stochastic
maximum principle and dynamic programming principle for this problem were
first established in Peng \cite{peng-1993} and \cite{peng-dpp} respectively.

Recently Hu et. al studied a new kind of BSDE which is driven by $G$-Brownian
motion in \cite{HJPS} and \cite{HJPS1}:%
\begin{align}
Y_{t}  &  =\xi+\int_{t}^{T}f(s,Y_{s},Z_{s})ds+\int_{t}^{T}g(s,Y_{s}%
,Z_{s})d\langle B\rangle_{s}\label{state-intro}\\
&  -\int_{t}^{T}Z_{s}dB_{s}-(K_{T}-K_{t}).\nonumber
\end{align}
They proved that there exists a unique triple of processes $(Y,Z,K)$ which
solves (\ref{state-intro}) under the standard Lipschitz conditions. This new
kind of BSDE is based on the $G$-expectation theory which is introduced by
Peng (see \cite{P07a}, \cite{P10} and the references therein). This
$G$-expectation framework ($G$-framework for short) does not require the
probability space and is convenient to study financial problems involving
volatility uncertainty. Let us mention that there are other recent advances in
this direction. Denis, Martini \cite{DenisMartini2006} and Denis, Hu, Peng
\cite{DHP11} developed quasi-sure stochastic analysis. Soner et al.
\cite{STZ11} have obtained a existence and uniqueness theorem for a new type
of fully nonlinear BSDE, called 2BSDE.

An important property of the solution $Y$ of (\ref{state-intro}) is that it
can be represented as the "supremum of expectations" over a set of
nondominated probability measures. For example, the solution $Y$ of
(\ref{state-intro}) at time $0$ can be written as
\begin{align}
Y_{0}  &  =\mathbb{\hat{E}}[\xi+\int_{0}^{T}f(s,Y_{s},Z_{s})ds+\int_{0}%
^{T}g(s,Y_{s},Z_{s})d\langle B\rangle_{s}]\label{state-intro-1}\\
&  =\sup_{P\in\mathcal{P}}E_{P}[\xi+\int_{0}^{T}f(s,Y_{s},Z_{s})ds+\int%
_{0}^{T}g(s,Y_{s},Z_{s})d\langle B\rangle_{s}]\nonumber
\end{align}
where $\mathcal{P}$ is a family of weakly compact nondominant probability
measures. Then, (\ref{state-intro}) can be used to define recursive utility
under volatility uncertainty. It is worth to point out that the recursive
utility under mean uncertainty was developed in Chen and Epstein \cite{CE}.
Epstein and Ji \cite{EJ-1,EJ-2} introduced a particular recursive utility
under both mean and volatility uncertainty.

Motivated by the recursive utility optimization under volatility uncertainty,
we explore a stochastic recursive optimal control problem in which the cost
functional is defined by the solution of the above new type of BSDE. In more
details, the state equation is governed by the following controlled SDE driven
by $G$-Brownian motion
\begin{align*}
dX_{s}^{t,x,u}  &  =b(s,X_{s}^{t,x,u},u_{s})ds+h_{ij}(s,X_{s}^{t,x,u}%
,u_{s})d\langle B^{i},B^{j}\rangle_{s}+\sigma(s,X_{s}^{t,x,u},u_{s})dB_{s},\\
X_{t}^{t,x,u}  &  =x.
\end{align*}
The cost functional is introduced by the solution $Y_{t}^{t,x,u}$ of the
following BSDE driven by $G$-Brownian motion at time $t$:%
\[%
\begin{array}
[c]{rl}%
-dY_{s}^{t,x,u}= & f(s,X_{s}^{t,x,u},Y_{s}^{t,x,u},Z_{s}^{t,x,u}%
,u_{s})ds+g_{ij}(s,X_{s}^{t,x,u},Y_{s}^{t,x,u},Z_{s}^{t,x,u},u_{s})d\langle
B^{i},B^{j}\rangle_{s}\\
& -Z_{s}^{t,x,u}dB_{s}-dK_{s}^{t,x,u},\\
Y_{T}^{t,x,u}= & \Phi(X_{T}^{t,x,u}),\text{ \ \ }s\in\lbrack t,T]\text{.}%
\end{array}
\]
We define the value function of our stochastic recursive optimal control
problem as follows:
\[
V(t,x)=\underset{u(\cdot)\in\mathcal{U}[t,T]}{\text{ess}\inf}Y_{t}^{t,x,u},
\]
where the control set is in the $G$-framework. In view of (\ref{state-intro-1}%
), we essentially have to solve a "inf sup problem". Such problem is known as
the robust optimal control problem, i.e., we consider the worst scenario by
maximizing over a set of probability measures and then we minimize the cost
functional. For recent development of robust utility maximization under
volatility uncertainty, we refer the interested readers to \cite{TTU},
\cite{MPZ} and \cite{DK}. Tevzadze, Toronjadze, Uzunashvili \cite{TTU} studied
robust exponential and power utilities. Matoussi, Possamai, Zhou \cite{MPZ}
related robust utility maximization problem to a particular 2BSDE with
quadratic growth. In \cite{DK}, Denis and Kervarec established a duality
theory for this problem in nondominated models.

The objective of our paper is to establish the dynamic programming principle
(DPP) for this stochastic recursive optimal control problem and investigate
the value function in $G$-framework.

It is well known that DPP and related HJB equations is a powerful approach to
solving optimal control problems (see \cite{Fleming W.H}, \cite{J.Yong} and
\cite{peng-dpp}). For the classical stochastic recursive optimal control
problem, Peng \cite{peng-dpp} obtained the Hamilton--Jacobi--Bellman equation
and proved that the value function is its viscosity solution. In
\cite{peng-dpp-1}, Peng generalized his results and originally introduced the
notion of stochastic backward semigroups which allows him to prove DPP in a
very straightforward way. This backward semigroup approach is also introduced
in the theory of stochastic differential games by Buckdahn and Li in
\cite{BL}. Note that Buckdahn et al. \cite{BLRT} obtained an existence result
of the stochastic recursive optimal control problem.

In this paper, we adopt the backward semigroup approach to build the DPP in
our context. At first, we need to define the essential infimum of a family of
random variables in the \textquotedblleft quasi-surely\textquotedblright%
\ sense (q.s. for short). Compared with classical case in \cite{peng-dpp-1},
this kind of essential infimum may not exist in our case (the q.s.\ case). We
define the essential infimum and prove its existence in this paper. Under a
family of non-dominated probability measures, it is far from being trivial to
prove that the value function $V$ is wellposed and deterministic. Due to a new
result in \cite{HWZ}, we construct the approximation of an element of the
admissible control set which is the key step to prove that $\underset{u(\cdot
)\in\mathcal{U}[t,T]}{\text{ess}\inf}Y_{t}^{t,x,u}$ is a deterministic
function. At last, we adopt an ``implied partition" approach to prove DPP (see
Lemma \ref{le-new-dpp1}) which is completely new in the literature.

We states that $V$ is deterministic continuous viscosity solution of the
following fully nonlinear HJB equation%
\begin{align*}
&  \partial_{t}V(t,x)+\underset{u\in U}{\inf}H(t,x,V,\partial_{x}%
V,\partial_{xx}^{2}V,u)=0,\\
&  V(T,x)=\Phi(x),\quad\ \ x\in\mathbb{R}^{n},
\end{align*}
where%
\[%
\begin{array}
[c]{cl}%
H(t,x,v,p,A,u)= & G(F(t,x,v,p,A,u))+\langle p,b(t,x,u)\rangle+f(t,x,v,\sigma
^{T}(t,x,u)p,u),\\
F_{ij}(t,x,v,p,A,u)= & (\sigma^{T}(t,x,u)A\sigma(t,x,u))_{ij}+2\langle
p,h_{ij}(t,x,u)\rangle\\
& +2g_{ij}(t,x,v,\sigma^{T}(t,x,u)p,u),
\end{array}
\]
$(t,x,v,p,A,u)\in\lbrack0,T]\times\mathbb{R}^{n}\times\mathbb{R}%
\times\mathbb{R}^{n}\times\mathbb{S}_{n}\times U$. The main difficulty to
prove this statement lies in the appearance of two decreasing $G$-martingale
terms. Applying a property of decreasing $G$-martingale proved in Lemma
\ref{le-hjb-4}, we overcome this difficulty (see Lemma \ref{Lem-auxilary-3})
and obtain the result.

In conclusion, since there is no reference probability measure under the
$G$-framework, our results generalize the results in Peng \cite{peng-dpp} and
\cite{peng-dpp-1} which was only considered in the Wiener space (corresponding
to $G$ is linear in our paper). Compared with our earlier article \cite{HJY},
the problem in \cite{HJY} is essentially a "sup sup problem" which is easier
to deal with. And the techniques developped in this paper can also used to
solve the problem in \cite{HJY}. Note that $G$ has the representation
(\ref{G-representation}) which leads to that the above HJB equation can also
be understood as a kind of Bellman-Issac equation. Then, it is meaningful to
show the difference between our paper and some related references (see
\cite{BL} and \cite{PZ}) in game theory. Needless to say, the game problem is
more complicated than the robust control problem since it needs to study the
value of game. Buckdahn, Li \cite{BL} employed strategies and Pham, Zhang
\cite{PZ} formulated their game problem in a weak framework. In constract, we
use controls and our formulation is a "strong" framework under the
$G$-framework. Different from \cite{PZ}, as revealed in \cite{HWZ}, our
admissible control set has quasi-continuous property and in particular, it
does not change with time. It is worth mentioning that, in our context, the
coefficents of the state equation include the state variable $X$.

The paper is organized as follows. In section 2, we present some fundamental
results on $G$-expectation theory. We formulate our stochastic recursive
optimal control problem in section 3. We prove the properties of the value
function in section 4 and establish the dynamic programming principle in
section 5. In section 6, we first derive the fully nonlinear HJB equation and
prove that the value function is the viscosity solution of the obtained HJB equation.

\section{Preliminaries}

We review some basic notions and results of $G$-expectation and the related
spaces of random variables. The readers may refer to \cite{P07a}, \cite{P07b},
\cite{P08a}, \cite{P10} for more details.

Let $\Omega_{T}=C_{0}([0,T];\mathbb{R}^{d})$, the space of $R^{d}$-valued
continuous functions on $[0,T]$ with $\omega_{0}=0$, and $B_{t}(\omega
)=\omega_{t}$ be the canonical process. Set
\[
L_{ip}(\Omega_{T}):=\{ \varphi(B_{t_{1}},...,B_{t_{n}}):n\geq1,t_{1}%
,...,t_{n}\in\lbrack0,T],\varphi\in C_{b.Lip}(\mathbb{R}^{d\times n})\},
\]
where $C_{b.Lip}(\mathbb{R}^{d\times n})$ denotes the set of bounded Lipschitz
functions on $\mathbb{R}^{d\times n}$.

We denote the $G$-expectation space by $(\Omega_{T},L_{ip}(\Omega
_{T}),\mathbb{\hat{E}})$. The function $G:\mathbb{S}_{d}\rightarrow\mathbb{R}$
is defined by
\begin{equation}
G(A):=\frac{1}{2}\mathbb{\hat{E}}[\langle AB_{1},B_{1}\rangle]. \label{eq-G}%
\end{equation}
where $\mathbb{S}_{d}$ denotes the collection of $d\times d$ symmetric
matrices. Note that there exists a bounded and closed subset $\Gamma
\subset\mathbb{R}^{d\times d}$ such that%
\begin{equation}
G(A)=\frac{1}{2}\underset{Q\in\Gamma}{\sup}tr[AQQ^{T}].
\label{G-representation}%
\end{equation}
In this paper, we only consider non-degenerate $G$-normal distribution, i.e.,
there exists some $\underline{\sigma}^{2}>0$ such that $G(A)-G(B)\geq\frac
{1}{2}\underline{\sigma}^{2}\mathrm{tr}[A-B]$ for any $A\geq B$.

We denote by $L_{G}^{p}(\Omega_{T})$ the completion of $L_{ip}(\Omega_{T})$
under the norm $\Vert X\Vert_{p,G}=(\mathbb{\hat{E}}[|X|^{p}])^{1/p}$ for
$p\geq1$. For each$\ t\geq0$, the conditional $G$-expectation $\mathbb{\hat
{E}}_{t}[\cdot]$ can be extended continuously to $L_{G}^{1}(\Omega_{T})$ under
the norm $\Vert\cdot\Vert_{1,G}$.

\begin{definition}
\label{def2.6} Let $M_{G}^{0}(0,T)$ be the collection of processes in the
following form: for a given partition $\{t_{0},\cdot\cdot\cdot,t_{N}\}=\pi
_{T}$ of $[0,T]$,
\[
\eta_{t}(\omega)=\sum_{j=0}^{N-1}\xi_{j}(\omega)I_{[t_{j},t_{j+1})}(t),
\]
where $\xi_{i}\in L_{ip}(\Omega_{t_{i}})$, $i=0,1,2,\cdot\cdot\cdot,N-1$.
\end{definition}

We denote by $M_{G}^{p}(0,T)$ the completion of $M_{G}^{0}(0,T)$ under the
norm $\Vert\eta\Vert_{M_{G}^{p}}=\{ \mathbb{\hat{E}}[\int_{0}^{T}|\eta
_{s}|^{p}ds]\}^{1/p}$ for $p\geq1$.

\begin{theorem}
\label{the2.7 copy(1)} (\cite{DHP11,HP09}) There exists a family of weakly
compact probability measures $\mathcal{P}$ on $(\Omega,\mathcal{B}(\Omega))$
such that
\[
\mathbb{\hat{E}}[\xi]=\sup_{P\in\mathcal{P}}E_{P}[\xi]\ \ \text{for
\ all}\ \xi\in L_{G}^{1}(\Omega).
\]
$\mathcal{P}$ is called a set that represents $\mathbb{\hat{E}}$.
\end{theorem}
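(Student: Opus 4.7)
The plan is to obtain the representation first on bounded continuous functions via a functional-analytic argument, and then extend it to $L_G^1(\Omega_T)$ by density. First I would extend $\hat{\mathbb{E}}$ from $L_{ip}(\Omega_T)$ to the space $C_b(\Omega_T)$ of bounded continuous functions on $\Omega_T$. The density of $L_{ip}(\Omega_T)$ in $C_b(\Omega_T)$ with respect to the sup-norm (using finite-dimensional approximations via projections $\pi_n(\omega) = (B_{t_1}(\omega), \ldots, B_{t_n}(\omega))$ combined with smooth Lipschitz approximations) together with the fact that $\hat{\mathbb{E}}$ is sublinear and $|\hat{\mathbb{E}}[\xi] - \hat{\mathbb{E}}[\eta]| \leq \hat{\mathbb{E}}[|\xi - \eta|] \leq \|\xi - \eta\|_\infty$ guarantees that the extension to a sublinear functional on $C_b(\Omega_T)$ is well-defined.

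Next I would apply a Hahn--Banach style argument. Since $\hat{\mathbb{E}}$ is sublinear on the linear space $C_b(\Omega_T)$, for each $\xi \in C_b(\Omega_T)$ one finds a linear functional $\ell_\xi$ on $C_b(\Omega_T)$ with $\ell_\xi \leq \hat{\mathbb{E}}$ and $\ell_\xi(\xi) = \hat{\mathbb{E}}[\xi]$. Let $\mathcal{L}$ be the collection of all such dominated linear functionals. Then by construction $\hat{\mathbb{E}}[\xi] = \sup_{\ell \in \mathcal{L}} \ell(\xi)$ for every $\xi \in C_b(\Omega_T)$, and each $\ell \in \mathcal{L}$ is positive (by sublinearity) and normalized ($\ell(1) = 1$).

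The crucial step is to identify each $\ell \in \mathcal{L}$ with a \emph{probability measure} on $(\Omega_T, \mathcal{B}(\Omega_T))$. To do this I would verify the Daniell continuity: if $\xi_n \in C_b(\Omega_T)$ decreases pointwise to $0$, then $\ell(\xi_n) \to 0$. This reduces to showing $\hat{\mathbb{E}}[\xi_n] \to 0$. This property follows from the tightness of the underlying finite-dimensional $G$-normal distributions, combined with the moment estimates $\hat{\mathbb{E}}[|B_t - B_s|^{2k}] \leq C_k|t-s|^k$ which, via Kolmogorov's criterion on the representing measures, guarantees that compact sets in $\Omega_T$ exhaust the mass. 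Once the Daniell condition holds, the Daniell--Stone theorem (or Riesz representation on the metric space $\Omega_T$) delivers a Borel probability measure $P_\ell$ with $\ell(\xi) = E_{P_\ell}[\xi]$ for $\xi \in C_b(\Omega_T)$. Setting $\mathcal{P} := \{P_\ell : \ell \in \mathcal{L}\}$ yields $\hat{\mathbb{E}}[\xi] = \sup_{P \in \mathcal{P}} E_P[\xi]$ on $C_b(\Omega_T)$.

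For weak compactness, I would use the uniform moment bounds on $B$ under every $P \in \mathcal{P}$ to obtain uniform tightness of $\mathcal{P}$ via Aldous-type/Kolmogorov-type criteria, and then take the weak closure of $\mathcal{P}$, which remains a representing set by monotone passage to the limit in $E_P[\xi]$. Finally I would extend the representation from $C_b(\Omega_T)$ to $L_G^1(\Omega_T)$: for $\xi \in L_G^1(\Omega_T)$, approximate by a sequence in $L_{ip}(\Omega_T) \subset C_b(\Omega_T)$ and pass to the limit using the estimate $|\sup_P E_P[\xi] - \sup_P E_P[\xi_n]| \leq \sup_P E_P[|\xi - \xi_n|] \leq \hat{\mathbb{E}}[|\xi - \xi_n|] \to 0$. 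I expect the main obstacle to be the verification of the Daniell/downward continuity at zero (equivalently, tightness of $\mathcal{P}$), since this is where the probabilistic structure of $G$-Brownian motion enters in an essential way; the purely algebraic Hahn--Banach step is routine, but upgrading the abstract linear functionals to genuine $\sigma$-additive probability measures requires the quantitative tightness estimates.
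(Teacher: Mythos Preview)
The paper does not give its own proof of this theorem; it is quoted as a known result from \cite{DHP11,HP09}. Your outline follows the standard route used in \cite{DHP11} (sublinear extension, Hahn--Banach, Daniell--Stone/Riesz, tightness), so in spirit you are on the same track as the cited literature.

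There is, however, a genuine gap in your first step. You assert that $L_{ip}(\Omega_T)$ is dense in $C_b(\Omega_T)$ for the sup-norm, and use this to extend $\hat{\mathbb{E}}$ to $C_b(\Omega_T)$. This is false: elements of $L_{ip}(\Omega_T)$ are cylinder functions, i.e.\ they depend on $\omega$ only through finitely many coordinates $(\omega_{t_1},\ldots,\omega_{t_n})$, whereas $\Omega_T=C_0([0,T];\mathbb{R}^d)$ is not compact. A bounded continuous functional such as a smooth truncation of $\omega\mapsto\sup_{t\in[0,T]}|\omega_t|$ cannot be uniformly approximated by cylinder functions, since two paths can agree at any prescribed finite set of times while having arbitrarily different suprema. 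Consequently your sup-norm extension argument does not go through, and without it you have no sublinear functional on $C_b(\Omega_T)$ to which Hahn--Banach can be applied.

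The correct workaround, which is what \cite{DHP11} actually does, is to bypass sup-norm density entirely: one first proves the Daniell continuity (\,$\xi_n\downarrow 0$ in $L_{ip}(\Omega_T)$ implies $\hat{\mathbb{E}}[\xi_n]\downarrow 0$\,) directly on the lattice $L_{ip}(\Omega_T)$ using the finite-dimensional tightness of $G$-normal laws, applies Hahn--Banach already on $L_{ip}(\Omega_T)$, and then invokes Daniell--Stone to produce Borel measures. The extension of the representation to $C_b(\Omega_T)$ and to $L_G^1(\Omega_T)$ is obtained \emph{afterwards}, using the tightness of the resulting family $\mathcal{P}$ (so approximation holds in $L^1(P)$ uniformly in $P\in\mathcal{P}$, not in sup-norm). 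Your later paragraphs already contain the ingredients for this; you just need to reorder the argument so that tightness is established before, not after, you leave the cylinder functions.
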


For this $\mathcal{P}$, we define capacity%
\[
c(A):=\sup_{P\in\mathcal{P}}P(A),\ A\in\mathcal{B}(\Omega_{T}).
\]
A set $A\subset\Omega_{T}$ is polar if $c(A)=0$. A property holds
\textquotedblleft quasi-surely\textquotedblright\ (q.s. for short) if it holds
outside a polar set. In the following, we do not distinguish two random
variables $X$ and $Y$ if $X=Y$ q.s.. We set%
\[
\mathbb{L}^{p}(\Omega_{T}):=\{X\in\mathcal{B}(\Omega_{T}):\sup_{P\in
\mathcal{P}}E_{P}[|X|^{p}]<\infty\} \ \text{for}\ p\geq1.
\]
It is important to note that $L_{G}^{p}(\Omega_{T})\subset\mathbb{L}%
^{p}(\Omega_{T})$. We extend $G$-expectation $\mathbb{\hat{E}}$ to
$\mathbb{L}^{p}(\Omega_{T})$ and still denote it by $\mathbb{\hat{E}}$, for
each $X\in$ $\mathbb{L}^{1}(\Omega_{T})$, we set%
\[
\mathbb{\hat{E}}[X]=\sup_{P\in\mathcal{P}}E_{P}[X].
\]
For $p\geq1$, $\mathbb{L}^{p}(\Omega_{T})$ is a Banach space under the norm
$(\mathbb{\hat{E}}[|\cdot|^{p}])^{1/p}$.

Furthermore, we extend the definition of conditional $G$-expectation. For each
fixed $t\in\lbrack0,T],$ let $(A_{i})_{i=1}^{n}$ be a partition of
$\mathcal{B}(\Omega_{t})$, and set
\[
\xi=\sum_{i=1}^{n}\eta_{i}I_{A_{i}},
\]
where $\eta_{i}\in L_{G}^{1}(\Omega_{T})$, $i=1,\cdots,n$. We define the
corresponding generalized conditional $G$-expectation, still denoted by
$\mathbb{\hat{E}}_{s}[\cdot]$, by setting%
\[
\mathbb{\hat{E}}_{s}[\sum_{i=1}^{n}\eta_{i}I_{A_{i}}]:=\sum_{i=1}%
^{n}\mathbb{\hat{E}}_{s}[\eta_{i}]I_{A_{i}}\ \text{\ for}\ s\in\lbrack t,T].
\]
Then, many properties of the conditional $G$-expectation still hold (refer to
Proposition 2.5 in \cite{HJPS1}).

\section{Problem}

\subsection{State equations}

We first give the definition of admissible controls.

\begin{definition}
For each $t\in\lbrack0,T],$ $u$ is said to be an admissible control on
$[t,T]$, if it satisfies the following conditions:

(i) $u:[t,T]\times\Omega\rightarrow U$ where $U$ is a given compact set of
$\mathbb{R}^{m}$;

(ii) $u\in M_{G}^{2}(t,T;\mathbb{R}^{m})$.
\end{definition}

The set of admissible controls on $[t,T]$ is denoted by $\mathcal{U}[t,T]$. In
the rest of this paper, we use Einstein summation convention.

Let $t\in\lbrack0,T]$, $\xi\in\cup_{\varepsilon>0}L_{G}^{2+\varepsilon}%
(\Omega_{t};\mathbb{R}^{n})$ and $u\in\mathcal{U}[t,T]$. Consider the
following forward and backward SDEs driven by $G$-Brownian motion:%

\begin{align}
dX_{s}^{t,\xi,u}  &  =b(s,X_{s}^{t,\xi,u},u_{s})ds+h_{ij}(s,X_{s}^{t,\xi
,u},u_{s})d\langle B^{i},B^{j}\rangle_{s}+\sigma(s,X_{s}^{t,\xi,u}%
,u_{s})dB_{s},\label{state-1}\\
X_{t}^{t,\xi,u}  &  =\xi,\nonumber
\end{align}
and%
\begin{align}
-dY_{s}^{t,\xi,u}  &  =f(s,X_{s}^{t,\xi,u},Y_{s}^{t,\xi,u},Z_{s}^{t,\xi
,u},u_{s})ds+g_{ij}(s,X_{s}^{t,\xi,u},Y_{s}^{t,\xi,u},Z_{s}^{t,\xi,u}%
,u_{s})d\langle B^{i},B^{j}\rangle_{s}\label{state-2}\\
&  -Z_{s}^{t,\xi,u}dB_{s}-dK_{s}^{t,\xi,u},\nonumber\\
Y_{T}^{t,\xi,u}  &  =\Phi(X_{T}^{t,\xi,u}),\text{ \ \ \ }s\in\lbrack
t,T]\text{.}\nonumber
\end{align}

Set
\[
S_{G}^{0}(0,T):=\{h(t,B_{t_{1}\wedge t},\cdot\cdot\cdot,B_{t_{n}\wedge
t}):t_{1},\ldots,t_{n}\in\lbrack0,T],h\in C_{b,Lip}(\mathbb{R}^{n+1})\}.
\]
For $p\geq1$ and $\eta\in S_{G}^{0}(0,T)$, let $\Vert\eta\Vert_{S_{G}^{p}}=\{
\mathbb{\hat{E}}[\underset{t\in\lbrack0,T]}{\sup}|\eta_{t}|^{p}]\}^{\frac
{1}{p}}$. Denote by $S_{G}^{p}(0,T)$ the completion of $S_{G}^{0}(0,T)$ under
the norm $\Vert\cdot\Vert_{S_{G}^{p}}$.

For given $t,$ $u$ and $\xi$, $(X^{t,\xi,u})$ and $(Y^{t,\xi,u},Z^{t,\xi
,u},K^{t,\xi,u})$ are called solutions of the above forward and backward SDEs
respectively if

(i) $(X^{t,\xi,u})\in M_{G}^{2}(t,T;\mathbb{R}^{n})$;

(ii) $(Y^{t,\xi,u},Z^{t,\xi,u})\in S_{G}^{2}(0,T)\times M_{G}^{2}%
(0,T;\mathbb{R}^{d})$;

(iii) $K^{t,\xi,u}$ is a decreasing $G$-martingale$\ $with $K_{t}^{t,\xi
,u}=0,$ $K_{T}^{t,\xi,u}\in L_{G}^{2}(\Omega_{T})$;

(iv) (\ref{state-1}) and (\ref{state-2}) are satisfied respectively.

We assume that $b,h_{ij}:[0,T]\times\mathbb{R}^{n}\times U\rightarrow
\mathbb{R}^{n}$, $\sigma:[0,T]\times\mathbb{R}^{n}\times U\rightarrow
\mathbb{R}^{n\times d}$, $\Phi:\mathbb{R}^{n}\rightarrow\mathbb{R}$,
$f,g_{ij}:[0,T]\times\mathbb{R}^{n}\times\mathbb{R}\times\mathbb{R}^{d}\times
U\rightarrow\mathbb{R}$ are deterministic functions and satisfy the following conditions:

\begin{description}
\item[(A1)] There exists a constant $C>0$ such that $\forall(s,x,y,z,u),$
$(s,x^{\prime},y^{\prime},z^{\prime},v)$ $\in\lbrack0,T]\times\mathbb{R}%
^{n}\times\mathbb{R}\times\mathbb{R}^{d}\times U,$%
\begin{align*}
&  |b(s,x,u)-b(s,x^{\prime},v)|+|h_{ij}(s,x,u)-h_{ij}(s,x^{\prime}%
,v)|+|\sigma(s,x,u)-\sigma(s,x^{\prime},v)|\\
&  \leq C(|x-x^{\prime}|+|u-v|),
\end{align*}%
\[
|\Phi(x)-\Phi(x^{\prime})|\leq C|x-x^{\prime}|,
\]%
\begin{align*}
&  |f(s,x,y,z,u)-f(s,x^{\prime},y^{\prime},z^{\prime},v)|+|g_{ij}%
(s,x,y,z,u)-g_{ij}(s,x^{\prime},y^{\prime},z^{\prime},v)|\\
&  \leq C(|x-x^{\prime}|+|y-y^{\prime}|+|z-z^{\prime}|+|u-v|);
\end{align*}

\item[(A2)] $b,h_{ij},\sigma,f,g_{ij}$ are continuous in $s$.
\end{description}

Then, we have the following theorems.

\begin{theorem}
(\cite{P10}) Let Assumptions (A1) and (A2) hold. Then there exists a unique
adapted solution $X$ for equation (\ref{state-1}).
\end{theorem}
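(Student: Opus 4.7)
The proof is a standard Picard iteration in $M_G^2(t,T;\mathbb{R}^n)$. Since the admissible control $u$ is fixed throughout this theorem, the equation (\ref{state-1}) can be read as a $G$-SDE with random, time-dependent, but Lipschitz coefficients $\tilde b(s,\omega,x):=b(s,x,u_s(\omega))$, and similarly for $\tilde h_{ij}$ and $\tilde\sigma$. I would therefore introduce the iterates $X^{0}_s \equiv \xi$ and
\begin{align*}
X^{n+1}_s &= \xi + \int_t^s b(r,X^n_r,u_r)\,dr + \int_t^s h_{ij}(r,X^n_r,u_r)\,d\langle B^{i},B^{j}\rangle_r \\
&\quad + \int_t^s \sigma(r,X^n_r,u_r)\,dB_r, \qquad s\in[t,T].
\end{align*}
The well-posedness of each iterate, and the fact that $X^{n}\in M_G^2(t,T;\mathbb{R}^n)$, uses the hypothesis $\xi\in L_G^{2+\varepsilon}$, the linear growth in $x$ inherited from (A1), the admissibility $u\in M_G^2$, and the fact that the $G$-It\^o integral and the $d\langle B^{i},B^{j}\rangle$-integral preserve $M_G^2$.

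Next I would apply the $G$-It\^o isometry $\mathbb{\hat E}[|\int_t^s \sigma_r dB_r|^2]\le C\mathbb{\hat E}[\int_t^s|\sigma_r|^2 dr]$, together with the boundedness of the Radon--Nikodym density of $\langle B^{i},B^{j}\rangle$ against $ds$ (a consequence of the $G$-representation (\ref{G-representation})), and the Lipschitz condition (A1), to obtain an estimate of the form
$$\mathbb{\hat E}\!\left[\sup_{r\in[t,s]}|X^{n+1}_r-X^n_r|^2\right]\le L\int_t^s \mathbb{\hat E}\!\left[\sup_{\tau\in[t,r]}|X^n_\tau-X^{n-1}_\tau|^2\right]dr.$$
Iterating this inequality gives the usual factorial decay $(LT)^n/n!$, so $\{X^n\}$ is Cauchy in the $\sup$-norm and thus in $M_G^2(t,T;\mathbb{R}^n)$; its limit $X$ is the desired solution. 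Uniqueness is obtained by applying exactly the same estimate to the difference of any two solutions and invoking Gronwall's lemma.

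The main technical delicacy, compared with the classical Brownian case, is keeping all objects inside the $G$-completion spaces (quasi-continuous, i.e.\ $M_G^2$ or $S_G^2$) rather than merely in $\mathbb{L}^2(\Omega_T)$: one must verify that, after each Picard step, the resulting process is still $G$-quasi-continuous. This is exactly the content of Peng's general $G$-SDE theory in \cite{P10} (which shows that $G$-stochastic integrals and bounded-variation integrals with $M_G^2$ integrands remain in $M_G^2$, and indeed in $S_G^2$), so the iteration is stable and the argument closes.
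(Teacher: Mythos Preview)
The paper does not give its own proof of this theorem: it merely cites \cite{P10}, where the existence and uniqueness of solutions to $G$-SDEs is established. Your Picard iteration argument is precisely the method used in \cite{P10} (Chapter V), so the proposal is correct and matches the original source; the only minor comment is that in \cite{P10} the contraction is often set up directly on $M_G^2$ over a short time interval (and then extended), rather than via the factorial-decay form of the iteration, but the two are equivalent.
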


\begin{theorem}
\label{SDE-est} (\cite{P10})Let $\xi,\xi^{\prime}\in L_{G}^{p}(\Omega
_{t};\mathbb{R}^{n})$ with $p\geq2$ and $u,v\in\mathcal{U}[t,T]$. Then we
have, for each $\delta\in\lbrack0,T-t]$,%
\[
\mathbb{\hat{E}}_{t}[|X_{t+\delta}^{t,\xi,u}-X_{t+\delta}^{t,\xi^{\prime}%
,v}|^{2}]\leq\bar{C}(|\xi-\xi^{\prime}|^{2}+\mathbb{\hat{E}}_{t}[\int%
_{t}^{t+\delta}|u_{s}-v_{s}|^{2}ds]),
\]%
\[
\mathbb{\hat{E}}_{t}[|X_{t+\delta}^{t,\xi,u}|^{p}]\leq\bar{C}(1+|\xi|^{p}),
\]%
\[
\mathbb{\hat{E}}_{t}[\sup_{s\in\lbrack t,t+\delta]}|X_{s}^{t,\xi,u}-\xi
|^{p}]\leq\bar{C}(1+|\xi|^{p})\delta^{p/2},
\]
where the constant $\bar{C}$ depends on $C$, $G$, $p$, $n$, $U$ and $T$.
\end{theorem}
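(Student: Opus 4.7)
The plan is to establish all three estimates by the classical SDE recipe adapted to the $G$-framework: apply the $G$-Itô formula to appropriate functionals of the solution, use the Lipschitz/linear-growth consequences of assumption (A1) to control the coefficients, absorb the stochastic integral terms using that $\int Z\, dB$ has zero conditional $G$-expectation and that $d\langle B^i,B^j\rangle_s$ is dominated by $\bar\sigma^2 \delta_{ij}\,ds$ q.s.\ (which follows from the representation (\ref{G-representation}) applied to the upper variance), and close with a Gronwall argument on the function $s\mapsto \mathbb{\hat{E}}_t[\,\cdot\,]$.

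For the first estimate, let $\tilde X_s := X_s^{t,\xi,u}-X_s^{t,\xi^{\prime},v}$ and write $d|\tilde X_s|^2$ via the $G$-Itô formula. The drift contribution is controlled by $(|\tilde X_s|+|u_s-v_s|)|\tilde X_s|$ through (A1); the $d\langle B^i,B^j\rangle$ contribution is of the same order after using the upper-variance bound; and the $dB_s$ contribution vanishes under $\mathbb{\hat{E}}_t[\cdot]$. This yields
\[
\mathbb{\hat{E}}_t[|\tilde X_s|^2]\le C|\xi-\xi^{\prime}|^2 + C\int_t^s \mathbb{\hat{E}}_t[|\tilde X_r|^2]\,dr + C\,\mathbb{\hat{E}}_t\!\left[\int_t^{t+\delta}|u_r-v_r|^2\,dr\right],
\]
where I have used the sub-additivity of $\mathbb{\hat{E}}_t$ and the tower property to push $\mathbb{\hat{E}}_t$ inside the time integral. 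Gronwall's inequality then gives the first bound. The second estimate is obtained the same way by applying $G$-Itô to $(1+|X_s^{t,\xi,u}|^2)^{p/2}$, using linear growth of $b, h_{ij}, \sigma$ (inherited from (A1)) to produce a Gronwall-type inequality in $\mathbb{\hat{E}}_t[(1+|X_s^{t,\xi,u}|^2)^{p/2}]$.

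For the third estimate, I would write
\[
X_s^{t,\xi,u}-\xi = \int_t^s b(r,X_r^{t,\xi,u},u_r)\,dr + \int_t^s h_{ij}(r,X_r^{t,\xi,u},u_r)\,d\langle B^i,B^j\rangle_r + \int_t^s \sigma(r,X_r^{t,\xi,u},u_r)\,dB_r,
\]
take $\sup_{s\in[t,t+\delta]}$, raise to the $p$-th power, and estimate each of the three terms separately. Hölder's inequality gives $\delta^{p-1}$ factors for the first two integrals, and the $G$-BDG inequality (available in Peng's calculus) handles the stochastic integral producing a $\delta^{p/2}$ factor. Combining with the second estimate to bound $\mathbb{\hat{E}}_t[|X_r^{t,\xi,u}|^p]$ uniformly in $r\in[t,t+\delta]$ by $\bar C(1+|\xi|^p)$, and noting that $\delta^{p/2}$ dominates $\delta^p$ on bounded horizons, yields the claimed $\delta^{p/2}$ rate.

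The only genuine subtlety, as opposed to a routine calculation, is that $\mathbb{\hat{E}}_t$ is only sub-linear: in particular one must justify the interchange of $\mathbb{\hat{E}}_t$ and the time integral (via Fubini-type results for $M_G^2$ that are part of the $G$-calculus) and one cannot simply use the isometry to equate $\mathbb{\hat{E}}_t\!\left[|\int \sigma\,dB|^2\right]$ with $\mathbb{\hat{E}}_t\!\left[\int|\sigma|^2 d\langle B\rangle\right]$; instead one has a sub-linear inequality together with the upper bound $d\langle B\rangle_s\le \bar\sigma^2 I\,ds$. Once these $G$-framework facts are invoked, the scheme reduces to the classical estimates and produces the constants $\bar C=\bar C(C,G,p,n,U,T)$ as asserted.
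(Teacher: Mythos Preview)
The paper does not prove this theorem at all: it is stated with the citation (\cite{P10}) and no proof is given, as these are standard a priori estimates for $G$-SDEs from Peng's monograph. Your sketch is the expected argument and is essentially correct in outline; there is nothing in the paper to compare it against.

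One small comment on the sketch itself: in the step where you ``push $\mathbb{\hat{E}}_t$ inside the time integral'' you need the inequality $\mathbb{\hat{E}}_t\bigl[\int_t^s |\tilde X_r|^2\,dr\bigr]\le \int_t^s \mathbb{\hat{E}}_t[|\tilde X_r|^2]\,dr$, which holds because $\mathbb{\hat{E}}_t$ is sublinear and the integrand is nonnegative (this is the usual $G$-Fubini inequality, not an equality). You implicitly use this direction, which is the correct one for the Gronwall argument. Similarly, for the $dB$ term you should not claim it ``vanishes under $\mathbb{\hat{E}}_t$'' outright (since $\mathbb{\hat{E}}_t$ is not linear), but rather use that $\int_t^s Z_r\,dB_r$ is a symmetric $G$-martingale so that both $\mathbb{\hat{E}}_t[\int_t^s Z_r\,dB_r]$ and $\mathbb{\hat{E}}_t[-\int_t^s Z_r\,dB_r]$ equal zero, which is enough to drop it after applying sub-additivity. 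With those two clarifications your plan reproduces the standard proof in \cite{P10}.
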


\begin{theorem}
(\cite{HJPS1}) Let Assumptions (A1) and (A2) hold. Then there exists a unique
adapted solution $(Y,Z,K)$ for equation (\ref{state-2}).
\end{theorem}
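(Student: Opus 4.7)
The plan is to reduce the statement to the general existence-uniqueness theorem for $G$-BSDEs in \cite{HJPS1} by verifying that equation (\ref{state-2}), with the forward process $X^{t,\xi,u}$ already constructed, fits into that framework. No genuinely new analysis is needed beyond packaging the hypotheses correctly.

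First I would check integrability of the data. Since $\xi\in L_G^{2+\varepsilon_{0}}(\Omega_t;\mathbb{R}^{n})$ for some $\varepsilon_{0}>0$ by the standing assumption on the initial condition, Theorem \ref{SDE-est} (applied with $p=2+\varepsilon_{0}$ and $\delta=T-t$) gives $\sup_{s\in[t,T]}|X_s^{t,\xi,u}|\in L_G^{2+\varepsilon_{0}}$. Combined with the Lipschitz bound on $\Phi$ from (A1), this yields $\Phi(X_T^{t,\xi,u})\in L_G^{2+\varepsilon_{0}}(\Omega_T)$, which is precisely the integrability required on the terminal data in \cite{HJPS1}.

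Next I would verify that the composed drivers
\[
\tilde f(s,\omega,y,z):=f(s,X_s^{t,\xi,u}(\omega),y,z,u_s(\omega)),\qquad \tilde g_{ij}(s,\omega,y,z):=g_{ij}(s,X_s^{t,\xi,u}(\omega),y,z,u_s(\omega)),
\]
satisfy the hypotheses of \cite{HJPS1}: namely, they are continuous in $s$ by (A2), uniformly Lipschitz in $(y,z)$ by (A1) with the same constant $C$, and the processes $\tilde f(\cdot,0,0)$, $\tilde g_{ij}(\cdot,0,0)$ lie in $M_G^{2+\varepsilon_{0}}(0,T)$. The latter uses once more the Lipschitz bound in (A1) together with $X^{t,\xi,u}\in M_G^{2+\varepsilon_{0}}$ and the fact that $u$ takes values in the compact set $U$, so $u\in M_G^{p}$ for every $p\ge 1$. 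Given these verifications, the existence-uniqueness theorem in \cite{HJPS1} produces a unique triple $(Y,Z,K)\in S_G^{2}(t,T)\times M_G^{2}(t,T;\mathbb{R}^{d})\times L_G^{2}(\Omega_T)$ with $K$ a decreasing $G$-martingale, $K_t=0$, solving (\ref{state-2}).

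The step I expect to be most delicate is not the Lipschitz or integrability bookkeeping but checking that the composed processes $s\mapsto X_s^{t,\xi,u}(\omega)$ and hence $\tilde f(\cdot,y,z)$, $\tilde g_{ij}(\cdot,y,z)$ truly sit in the $M_G^{\beta}$ spaces (rather than merely being $\mathcal{B}(\Omega)$-measurable with finite moments), since the $L_G^{p}$/$M_G^{p}$ topology used in \cite{HJPS1} is strictly finer than the quasi-sure one. This is handled by approximating $X^{t,\xi,u}$ by its Picard iterates, which are step-function limits in $M_G^{2+\varepsilon_{0}}$, and then composing with the continuous functions $f,g_{ij}$ using (A1)-(A2); everything else in the proposed proof is a direct appeal to the cited theorem.
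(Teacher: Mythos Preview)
Your proposal is correct and matches the paper's treatment: the paper does not give its own proof of this theorem but simply cites it from \cite{HJPS1}, so the only thing to do is exactly what you outline---verify that the terminal condition $\Phi(X_T^{t,\xi,u})$ and the composed drivers $\tilde f,\tilde g_{ij}$ satisfy the integrability and Lipschitz hypotheses of the general $G$-BSDE well-posedness theorem there. Your bookkeeping on the $M_G^{2+\varepsilon_0}$ membership of $X^{t,\xi,u}$ and the use of compactness of $U$ for the control is the right way to close the loop.
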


\begin{theorem}
\label{BSDE-est} (\cite{HJPS1})Let $\xi,\xi^{\prime}\in\cup_{\varepsilon
>0}L_{G}^{2+\varepsilon}(\Omega_{t};\mathbb{R}^{n})$ and $u,v\in
\mathcal{U}[t,T]$. Then there exist two positive constants $\bar{C}_{1}$ and
$\bar{C}_{2}$ depending on $C$, $G$ and $T$ such that%
\begin{align*}
|Y_{t}^{t,\xi,u}-Y_{t}^{t,\xi^{\prime},v}|^{2}  &  \leq\bar{C}_{1}%
\mathbb{\hat{E}}_{t}[|\Phi(X_{T}^{t,\xi,u})-\Phi(X_{T}^{t,\xi^{\prime}%
,v})|^{2}+(\int_{t}^{T}\hat{F}_{s}ds)^{2}]\\
&  \leq\bar{C}_{2}\mathbb{\hat{E}}_{t}[|\Phi(X_{T}^{t,\xi,u})-\Phi
(X_{T}^{t,\xi^{\prime},v})|^{2}+\int_{t}^{T}|\hat{F}_{s}|^{2}ds],
\end{align*}
where%
\begin{align*}
\hat{F}_{s}  &  =|f(s,X_{s}^{t,\xi,u},Y_{s}^{t,\xi,u},Z_{s}^{t,\xi,u}%
,u_{s})-f(s,X_{s}^{t,\xi^{\prime},v},Y_{s}^{t,\xi,u},Z_{s}^{t,\xi,u},v_{s})|\\
&  +\sum_{i,j=1}^{d}|g_{ij}(s,X_{s}^{t,\xi,u},Y_{s}^{t,\xi,u},Z_{s}^{t,\xi
,u},u_{s})-g_{ij}(s,X_{s}^{t,\xi^{\prime},v},Y_{s}^{t,\xi,u},Z_{s}^{t,\xi
,u},v_{s})|.
\end{align*}

\end{theorem}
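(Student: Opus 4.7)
The plan is to reduce this stability estimate to the standard $L^{2}$ a priori bound for $G$-BSDEs established in \cite{HJPS1}, by linearizing the generator so that the difference $\bar{Y}:=Y^{t,\xi,u}-Y^{t,\xi^{\prime},v}$ solves a $G$-BSDE with Lipschitz coefficients and an inhomogeneous forcing controlled by $\hat{F}$. First I would subtract the two equations (\ref{state-2}) written for $(\xi,u)$ and $(\xi^{\prime},v)$. Setting $\bar{Z}:=Z^{t,\xi,u}-Z^{t,\xi^{\prime},v}$ and $\bar{K}:=K^{t,\xi,u}-K^{t,\xi^{\prime},v}$, this yields
\[
-d\bar{Y}_{s}=\delta f_{s}\,ds+\delta g_{ij,s}\,d\langle B^{i},B^{j}\rangle_{s}-\bar{Z}_{s}\,dB_{s}-d\bar{K}_{s},\quad\bar{Y}_{T}=\Phi(X_{T}^{t,\xi,u})-\Phi(X_{T}^{t,\xi^{\prime},v}),
\]
where $\delta f_{s}$, $\delta g_{ij,s}$ denote the differences of the two generators at the respective triples.

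Next I would linearize each $\delta f_{s}$ and $\delta g_{ij,s}$ by inserting and subtracting the intermediate expression in which only the $X$-argument and the control are switched from $(X^{t,\xi,u},u)$ to $(X^{t,\xi^{\prime},v},v)$, while $(Y,Z)$ remain $(Y^{t,\xi,u},Z^{t,\xi,u})$. By (A1), this gives
\[
\delta f_{s}=\alpha^{f}_{s}\bar{Y}_{s}+\langle\beta^{f}_{s},\bar{Z}_{s}\rangle+F^{f}_{s},\qquad\delta g_{ij,s}=\alpha^{g,ij}_{s}\bar{Y}_{s}+\langle\beta^{g,ij}_{s},\bar{Z}_{s}\rangle+F^{g,ij}_{s},
\]
with bounded coefficients $|\alpha^{f}_{s}|,|\alpha^{g,ij}_{s}|,|\beta^{f}_{s}|,|\beta^{g,ij}_{s}|\leq C$, and residuals $F^{f},F^{g,ij}$ satisfying $|F^{f}_{s}|+\sum_{i,j}|F^{g,ij}_{s}|\leq\hat{F}_{s}$. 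Thus $(\bar{Y},\bar{Z},\bar{K})$ solves a $G$-BSDE whose generator is globally Lipschitz in $(y,z)$ with constant depending only on $C$, with terminal value $\Phi(X_{T}^{t,\xi,u})-\Phi(X_{T}^{t,\xi^{\prime},v})$ and inhomogeneous parts $F^{f},F^{g,ij}$.

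I would then apply the conditional $L^{2}$ a priori estimate for such $G$-BSDEs from \cite{HJPS1} to obtain
\[
|\bar{Y}_{t}|^{2}\leq\bar{C}_{1}\mathbb{\hat{E}}_{t}\bigl[|\bar{Y}_{T}|^{2}+\bigl(\textstyle\int_{t}^{T}\hat{F}_{s}\,ds\bigr)^{2}\bigr],
\]
which is the first claimed inequality. The second inequality then follows from Cauchy--Schwarz, $\bigl(\int_{t}^{T}\hat{F}_{s}\,ds\bigr)^{2}\leq(T-t)\int_{t}^{T}\hat{F}_{s}^{2}\,ds$, together with the monotonicity of $\mathbb{\hat{E}}_{t}$, so one may take $\bar{C}_{2}=\bar{C}_{1}\max(1,T)$.

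The step I expect to be the real obstacle is the cited $G$-BSDE a priori estimate itself. After applying It\^o's formula to $e^{\lambda s}|\bar{Y}_{s}|^{2}$ one is confronted with the term $2\int_{t}^{T}e^{\lambda s}\bar{Y}_{s}\,d\bar{K}_{s}$; since $\bar{K}$ is a difference of two decreasing $G$-martingales rather than a symmetric $G$-martingale, taking $\mathbb{\hat{E}}_{t}$ does not kill it, unlike in the classical Wiener setting. The argument in \cite{HJPS1} circumvents this by combining sublinearity of $\mathbb{\hat{E}}_{t}$, the non-degeneracy $G(A)-G(B)\geq\tfrac{1}{2}\underline{\sigma}^{2}\mathrm{tr}[A-B]$ which turns $\int|\bar{Z}_{s}|^{2}d\langle B\rangle_{s}$ into a coercive quadratic term, and the moment control $K_{T}^{t,\xi,u},K_{T}^{t,\xi^{\prime},v}\in L_{G}^{2}(\Omega_{T})$ guaranteed by the $L_{G}^{2+\varepsilon}$ integrability of the initial data. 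I would quote this estimate rather than reprove it, since it is the core technical result of \cite{HJPS1}.
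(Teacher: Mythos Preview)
The paper does not prove this theorem; it is simply quoted from \cite{HJPS1} without argument, so there is no in-paper proof to compare against. Your sketch is a reasonable outline of how the cited result is obtained, and you correctly isolate the genuine difficulty.

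One point of phrasing deserves tightening: you write that $(\bar Y,\bar Z,\bar K)$ ``solves a $G$-BSDE whose generator is globally Lipschitz'' and then propose to invoke the a~priori estimate for $G$-BSDEs. Strictly speaking this is not a $G$-BSDE in the sense of \cite{HJPS1}, because $\bar K=K^{t,\xi,u}-K^{t,\xi',v}$ is a difference of two decreasing $G$-martingales and hence neither decreasing nor a $G$-martingale. You cannot feed the triple into an off-the-shelf a~priori bound for solutions. The actual argument in \cite{HJPS1} (which you gesture at in your last paragraph) applies It\^o's formula to $e^{\lambda s}|\bar Y_s|^2$ directly and treats the two $dK$-integrals \emph{separately}: each term $\int \bar Y_s\,dK_s^{t,\xi,u}$ is controlled using that $K^{t,\xi,u}$ is decreasing together with the $L_G^2$ bound on $K_T^{t,\xi,u}$, and similarly for the other one, combined with the non-degeneracy of $G$ to absorb the $\bar Z$-terms. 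Your final paragraph shows you understand this; just make sure the body of the argument does not pretend the difference equation is itself a well-posed $G$-BSDE. With that correction the proposal matches the approach of \cite{HJPS1}.
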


\begin{theorem}
\label{quasi}(\cite{HWZ})Let $b$, $h_{ij}$, $\sigma$ be independent of $u$ and
satisfy (A1) and (A2). Assume further that there exist constants $L>0$,
$0<\lambda<\Lambda$ such that $|b|\leq L$, $|h_{ij}|\leq L$, $\lambda
\leq|\sigma_{i}|\leq\Lambda$ for $i\leq n$, where $\sigma_{i}$ is the $i$-th
row of $\sigma$. Then for each $x$, $a$, $a^{\prime}\in\mathbb{R}^{n}$ with
$a\leq a^{\prime}$, $s\geq t$, we have $I_{\{X_{s}^{t,x}\in\lbrack
a,a^{\prime})\}}\in L_{G}^{2}(\Omega_{s})$. In particular, for each $c$,
$c^{\prime}\in\mathbb{R}^{d\times k}$, $c\leq c^{\prime}$ and $t\leq s_{1}%
\leq\cdots\leq s_{k}$, we have $I_{\{(B_{s_{1}}-B_{t},\ldots,B_{s_{k}}%
-B_{t})\in\lbrack c,c^{\prime})\}}\in L_{G}^{2}(\Omega_{s_{k}})$.
\end{theorem}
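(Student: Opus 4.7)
The plan is to realise $I_{\{X_s^{t,x}\in[a,a')\}}$ as an $L_G^2(\Omega_s)$-limit of bounded Lipschitz functionals of $X_s^{t,x}$, which themselves lie in $L_G^2(\Omega_s)$; the whole problem then reduces to a uniform-in-$P\in\mathcal P$ capacity estimate for thin slabs. First I would fix $\epsilon>0$ and choose a bounded Lipschitz mollification $\phi_\epsilon:\mathbb R^n\to[0,1]$ equal to $I_{[a,a')}$ outside the $\epsilon$-neighbourhood $N_\epsilon$ of $\partial[a,a')$. Since $X_s^{t,x}\in L_G^2(\Omega_s;\mathbb R^n)$ by Theorem~\ref{SDE-est} and $\phi_\epsilon$ is bounded Lipschitz, one has $\phi_\epsilon(X_s^{t,x})\in L_G^2(\Omega_s)$: approximate $X_s^{t,x}$ in $L_G^2$ by Euler or Picard iterates, each a cylinder Lipschitz function of Brownian values and hence in $L_{ip}(\Omega_s)$, and compose with $\phi_\epsilon$. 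Moreover
\[
\|\phi_\epsilon(X_s^{t,x})-I_{\{X_s^{t,x}\in[a,a')\}}\|_{2,G}^2\le\mathbb{\hat{E}}\bigl[I_{\{X_s^{t,x}\in N_\epsilon\}}\bigr]=\sup_{P\in\mathcal P}P\bigl(X_s^{t,x}\in N_\epsilon\bigr),
\]
so the theorem reduces to showing this supremum is $O(\epsilon)$.

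To prove that slab estimate, I would decompose $N_\epsilon$ into the $2n$ coordinate slabs $\{|y_i-c|<\epsilon\}$ with $c\in\{a_i,a_i'\}$ and aim at $\sup_{P\in\mathcal P}P(|X_s^{t,x,i}-c|<\epsilon)\le C\epsilon$ for a constant $C=C(L,\lambda,\underline{\sigma},T)$. Fix $P\in\mathcal P$; by the $G$-Brownian representation there is a $\Gamma$-valued process $\theta^P_s$ with $d\langle B\rangle_s=\theta^P_s\,ds$, and the non-degeneracy of $G$ gives $\theta^P_s\ge\underline{\sigma}^2 I_d$. Under $P$, the $i$-th component of $X^{t,x}$ is an It\^o process with drift bounded by some $C_0=C_0(L,\Gamma)$ and martingale quadratic variation of density at least $\underline{\sigma}^2|\sigma_i|^2\ge\underline{\sigma}^2\lambda^2$. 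A Girsanov change of measure, whose density and inverse have $L^2(P)$-norms bounded uniformly in $P$ thanks to the bounded drift and scalar uniform ellipticity, removes the drift; Dambis--Dubins--Schwarz then identifies the remaining martingale with a Brownian motion run up to a time at least $\underline{\sigma}^2\lambda^2(s-t)$, and the Gaussian density bound yields $P(|X_s^{t,x,i}-c|<\epsilon)\le C\epsilon/\sqrt{s-t}$, uniformly in $P$.

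Combining the two steps gives $\phi_\epsilon(X_s^{t,x})\to I_{\{X_s^{t,x}\in[a,a')\}}$ in $L_G^2(\Omega_s)$ as $\epsilon\to0$, so the indicator lies in $L_G^2(\Omega_s)$. The ``in particular'' statement on Brownian increments then follows either by specialising the above to $b=h=0$, $\sigma=\mathrm{id}$, or, more directly, by iterating the same one-dimensional slab estimate on the $G$-normal law of each independent increment via the factorisation $\mathbb{\hat{E}}[\varphi(B_{s_1}-B_t,\ldots,B_{s_k}-B_t)]=\mathbb{\hat{E}}[\mathbb{\hat{E}}_{s_{k-1}}[\cdots]]$.

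The main obstacle is the slab estimate in the middle paragraph: because $\mathcal P$ is non-dominated one cannot invoke Aronson-type density bounds for a single reference measure, and the constant $C$ must be extracted uniformly in $P\in\mathcal P$. This is precisely where the assumptions $|\sigma_i|\ge\lambda$ on each row and the non-degeneracy $\theta^P_s\ge\underline{\sigma}^2 I_d$ become indispensable: jointly they force a pointwise lower bound on the one-dimensional quadratic variation of each $X^{t,x,i}$ under every $P$ simultaneously, which is exactly what is needed to make the Girsanov--DDS--Gaussian chain uniform.
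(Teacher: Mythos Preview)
The paper does not contain its own proof of this theorem: it is quoted verbatim from \cite{HWZ} and used as a black box (see the Remark immediately following the statement). There is therefore nothing in the present paper to compare your argument against.

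As a standalone sketch your strategy is the right one and matches the standard route to quasi-continuity: approximate the indicator by bounded Lipschitz functions of $X_s^{t,x}$, reduce to a uniform-in-$P$ small-ball estimate $\sup_{P\in\mathcal P}P(X_s^{t,x}\in N_\epsilon)\to 0$, and derive the latter from the row-wise nondegeneracy $|\sigma_i|\ge\lambda$ together with $\underline{\sigma}^2>0$. Two places deserve more care. First, after Girsanov the transfer back to $P$ via Cauchy--Schwarz costs a square root, so you actually obtain $P(|X_s^{t,x,i}-c|<\epsilon)\le C\epsilon^{1/2}$ rather than $C\epsilon$; this is harmless for the limit but should be stated correctly. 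Second, the Dambis--Dubins--Schwarz step does not by itself give a pointwise density bound for $W_\tau$ when the terminal time $\tau$ is random and adapted; one needs either a Krylov-type estimate for one-dimensional It\^o processes with uniformly nondegenerate diffusion (which is exactly what the hypotheses provide), or an occupation-time argument using both the lower and the upper bound $|\sigma_i|\le\Lambda$. Either device yields the required uniform slab bound and closes the argument, but the DDS--Gaussian chain as written is a gap that should be filled by naming the estimate you actually invoke.
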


\begin{remark}
If there exists a $t_{0}<T$ such that $b$, $h_{ij}$, $\sigma$ are continuous
in $s$ just on $[t_{0},T]$, then the above theorem still holds by the proof in
\cite{HWZ}.
\end{remark}

\subsection{Stochastic optimal control problem}

The state equation of our stochastic optimal control problem is governed by
the above forward SDE (\ref{state-1}) and the objective functional is
introduced by the solution of the BSDE (\ref{state-2}) at time $t$. Let $\xi$
equals a constant $x\in\mathbb{R}^{n}$. When $u$ changes, $Y_{t}^{t,x,u}$ (the
solution $Y^{t,x,u}$ at time $t$) also changes. In order to study the value
function of our stochastic optimal control problem, we need to define the
essential infimum of $\{Y_{t}^{t,x,u}\mid u\in\mathcal{U}[t,T]\}.$

\begin{definition}
\label{esssup}The essential infimum of $\{Y_{t}^{t,x,u}\mid u\in
\mathcal{U}[t,T]\}$, denoted by $\underset{u(\cdot)\in\mathcal{U}%
[t,T]}{\text{ess}\inf}Y_{t}^{t,x,u}$, is a random variable $\zeta\in L_{G}%
^{2}(\Omega_{t})$ satisfying:

(i) $\forall u\in\mathcal{U}[t,T],$ $\zeta\leq Y_{t}^{t,x,u}$ $\ $q.s.;

(ii) if $\eta$ is a random variable satisfying $\eta\leq Y_{t}^{t,x,u}$
$\ $q.s. for any $u\in\mathcal{U}[t,T]$, then $\zeta\geq\eta$ $\ $q.s..
\end{definition}

Similarly, we can define the essential infimum of $\{Y_{t}^{t,\xi,u}\mid
u\in\mathcal{U}[t,T]\},$ where $\xi\in\cup_{\varepsilon>0}L_{G}^{2+\varepsilon
}(\Omega_{t};\mathbb{R}^{n})$.

The following example shows that the essential infimum may be not exist.

\begin{example}
Let $d=1$ and $(B_{t})_{t\geq0}$ be a $1$-dimensional $G$-Brownian motion with
$G(a)=\frac{1}{2}(a^{+}-\frac{1}{3}a^{-})$. We first show that $I_{\{\langle
B\rangle_{1}=\frac{1}{2}\}}$, $I_{\{\langle B\rangle_{1}\geq\frac{1}{2}\}}$
and $I_{\{\langle B\rangle_{1}>\frac{1}{2}\}}\not \in L_{G}^{2}(\Omega_{1})$.

It is easy to verify that $h_{k}(\langle B\rangle_{1})\downarrow I_{\{\langle
B\rangle_{1}=\frac{1}{2}\}}$, where
\[
h_{k}(x)=k(x-\frac{1}{2}+\frac{1}{k})I_{[\frac{1}{2}-\frac{1}{k},\frac{1}{2}%
]}(x)+[1-k(x-\frac{1}{2})]I_{(\frac{1}{2},\frac{1}{2}+\frac{1}{k}]}(x).
\]
If $I_{\{\langle B\rangle_{1}=\frac{1}{2}\}}\in L_{G}^{2}(\Omega_{1})$, then
\[
h_{k}(\langle B\rangle_{1})-I_{\{\langle B\rangle_{1}=\frac{1}{2}\}}\in
L_{G}^{2}(\Omega_{1})\text{ and }h_{k}(\langle B\rangle_{1})-I_{\{\langle
B\rangle_{1}=\frac{1}{2}\}}\downarrow0.
\]
By Corollary 33 in \cite{DHP11}, we have $\mathbb{\hat{E}}[h_{k}(\langle
B\rangle_{1})-I_{\{\langle B\rangle_{1}=\frac{1}{2}\}}]\downarrow0$. On the
other hand,%
\[%
\begin{array}
[c]{rl}
& \mathbb{\hat{E}}[h_{k}(\langle B\rangle_{1})-I_{\{\langle B\rangle_{1}%
=\frac{1}{2}\}}]\\
\geq & \lim_{j\rightarrow\infty}\mathbb{\hat{E}}[h_{k}(\langle B\rangle
_{1})-h_{j}(\langle B\rangle_{1})]\\
= & \lim_{j\rightarrow\infty}\sup\{h_{k}(x)-h_{j}(x):x\in\lbrack0,1]\}\\
= & 1.
\end{array}
\]
Thus $I_{\{\langle B\rangle_{1}=\frac{1}{2}\}}\not \in L_{G}^{2}(\Omega_{1})$.
Similarly, we can prove that $I_{\{\langle B\rangle_{1}\geq\frac{1}{2}\}}$ and
$I_{\{\langle B\rangle_{1}\leq\frac{1}{2}\}}\not \in L_{G}^{2}(\Omega_{1})$,
which implies that
\[
I_{\{\langle B\rangle_{1}>\frac{1}{2}\}}=1-I_{\{\langle B\rangle_{1}\leq
\frac{1}{2}\}}\not \in L_{G}^{2}(\Omega_{1}).
\]
Set $\mathcal{H}_{1}=\{h_{k}(\langle B\rangle_{1}):k\geq1\}$ and
$\mathcal{H}_{2}=\{g_{k}(\langle B\rangle_{1}):k\geq1\}$, where
\[
g_{k}(x)=k(x-\frac{1}{2}+\frac{1}{k})I_{[\frac{1}{2}-\frac{1}{k},\frac{1}{2}%
]}(x)+I_{(\frac{1}{2},\infty)}(x).
\]
We assert that either $\underset{\xi\in\mathcal{H}_{1}}{ess\inf}\xi$ or
$\underset{\xi\in\mathcal{H}_{2}}{ess\inf}\xi$ does not exist. Otherwise,
$\underset{\xi\in\mathcal{H}_{1}}{ess\inf}\xi$ and $\underset{\xi
\in\mathcal{H}_{2}}{ess\inf}\xi$ belong to $L_{G}^{2}(\Omega_{1})$.

By the definition we get%
\[%
\begin{array}
[c]{l}%
\underset{\xi\in\mathcal{H}_{1}}{ess\inf}\xi\leq I_{\{\langle B\rangle
_{1}=\frac{1}{2}\}\text{ }}\text{q.s.};\\
\underset{\xi\in\mathcal{H}_{2}}{ess\inf}\xi\leq I_{\{\langle B\rangle_{1}%
\geq\frac{1}{2}\}}\text{ q.s.};\\
\underset{\xi\in\mathcal{H}_{1}}{(ess\inf}\xi-\underset{\xi\in\mathcal{H}%
_{2}}{ess\inf}\xi)^{+}\in L_{G}^{2}(\Omega_{1});\\
\underset{\xi\in\mathcal{H}_{1}}{(ess\inf}\xi-\underset{\xi\in\mathcal{H}%
_{2}}{ess\inf}\xi)^{-}\in L_{G}^{2}(\Omega_{1}),
\end{array}
\]
which implies that
\[
I_{\{\langle B\rangle_{1}=\frac{1}{2}\}}\underset{\xi\in\mathcal{H}%
_{2}}{ess\inf}\xi=\underset{\xi\in\mathcal{H}_{1}}{ess\inf}\xi.
\]
Note that $\tilde{h}_{k}(\langle B\rangle_{1})\leq\underset{\xi\in
\mathcal{H}_{2}}{ess\inf}\xi$ for $k\geq1$, where
\[
\tilde{h}_{k}(x)=k(x-\frac{1}{2})I_{[\frac{1}{2},\frac{1}{2}+\frac{1}{k}%
)}(x)+I_{[\frac{1}{2}+\frac{1}{k},\infty)}(x).
\]
It yields that $I_{\{\langle B\rangle_{1}>\frac{1}{2}\}}\leq\underset{\xi
\in\mathcal{H}_{2}}{ess\inf}\xi$ q.s.. Then $\underset{\xi\in\mathcal{H}%
_{2}}{ess\inf}\xi=\underset{\xi\in\mathcal{H}_{1}}{ess\inf}\xi+I_{\{\langle
B\rangle_{1}>\frac{1}{2}\}}$ which implies $I_{\{\langle B\rangle_{1}>\frac
{1}{2}\}}\in L_{G}^{2}(\Omega_{1})$.

But this contradicts to $I_{\{\langle B\rangle_{1}>\frac{1}{2}\}}%
\not \in L_{G}^{2}(\Omega_{1})$. $\blacksquare$
\end{example}

Our stochastic optimal control problem is: for given $x\in\mathbb{R}^{n}$, to
find $u(\cdot)\in\mathcal{U}[t,T]$ so as to minimize the objective function
$Y_{t}^{t,x,u}$.

For $x\in\mathbb{R}^{n}$, we define the value function%
\begin{equation}
V(t,x):=\underset{u\in\mathcal{U}[t,T]}{ess\inf}Y_{t}^{t,x,u}\text{ for }%
x\in\mathbb{R}^{n}. \label{valuefunction0}%
\end{equation}

In the following we will prove that $V(\cdot,\cdot)$ exists and is
deterministic and for each $\xi\in\cup_{\varepsilon>0}L_{G}^{2+\varepsilon
}(\Omega_{t};\mathbb{R}^{n})$, $V(t,\xi)=\underset{u\in\mathcal{U}%
[t,T]}{ess\inf}Y_{t}^{t,\xi,u}$. Futhermore, we will obtain the dynamic
programming principle and the related fully nonlinear HJB equation.

\section{Properties of the value function}

We first give some notations:%
\begin{align*}
L_{ip}(\Omega_{s}^{t})  &  :=\{ \varphi(B_{t_{1}}-B_{t},...,B_{t_{n}}%
-B_{t}):n\geq1,t_{1},...,t_{n}\in\lbrack t,s],\varphi\in C_{b.Lip}%
(\mathbb{R}^{d\times n})\};\\
L_{G}^{2}(\Omega_{s}^{t})  &  :=\{ \text{the completion of }L_{ip}(\Omega
_{s}^{t})\text{ under the norm }\Vert\cdot\Vert_{2,G}\};\\
M_{G}^{0,t}(t,T)  &  :=\{ \eta_{s}=\sum_{i=0}^{N-1}\xi_{i}I_{[t_{i},t_{i+1}%
)}(s):t=t_{0}<\cdots<t_{N}=T,\xi_{i}\in L_{ip}(\Omega_{t_{i}}^{t})\};\\
M_{G}^{2,t}(t,T)  &  :=\{ \text{the completion of }M_{G}^{0,t}(t,T)\text{
under the norm }\Vert\cdot\Vert_{M_{G}^{2}}\};\\
\mathcal{U}[t,T]  &  :=\{u:u\in M_{G}^{2}(t,T;\mathbb{R}^{m})\text{ with
values in }U\};\\
\mathcal{U}^{t}[t,T]  &  :=\{u:u\in M_{G}^{2,t}(t,T;\mathbb{R}^{m})\text{ with
values in }U\};\\
\mathbb{U}[t,T]  &  :=\{u=\sum\limits_{i=1}^{n}I_{A_{i}}u^{i}:n\in
\mathbb{N},u^{i}\in\mathcal{U}^{t}[t,T],I_{A_{i}}\in L_{G}^{2}(\Omega
_{t}),\Omega=%
{\displaystyle\bigcup\limits_{i=1}^{n}}
A_{i}\};\\
\mathbb{U}^{t}[t,T]  &  :=\{u=\sum_{i=0}^{N-1}(\sum_{j=1}^{l_{i}}a_{j}%
^{i}I_{A_{j}^{i}})I_{[t_{i},t_{i+1})}(s):l_{i}\in\mathbb{N},a_{j}^{i}\in
U,I_{A_{j}^{i}}\in L_{G}^{2}(\Omega_{t_{i}}^{t}),\Omega=%
{\displaystyle\bigcup\limits_{j=1}^{l_{i}}}
A_{j}^{i}\}.
\end{align*}

\begin{remark}
\label{quasi1}For $t=t_{0}<\cdots<t_{N}=T$, $\xi_{i}\in L_{G}^{2}%
(\Omega_{t_{i}}^{t})$, it is easy to check that $\sum_{i=0}^{N-1}\xi
_{i}I_{[t_{i},t_{i+1})}(s)\in M_{G}^{2,t}(t,T)$. From this we can deduce that
$\mathbb{U}^{t}[t,T]\subset\mathcal{U}^{t}[t,T]\subset\mathbb{U}%
[t,T]\subset\mathcal{U}[t,T]$.
\end{remark}

In order to prove%
\[
V(t,x)=\inf_{u\in\mathcal{U}^{t}[t,T]}Y_{t}^{t,x,u}=\inf_{u\in\mathbb{U}%
^{t}[t,T]}Y_{t}^{t,x,u},
\]
we need the following lemmas.

\begin{lemma}
\label{le-dpp-2} Let $u\in\mathcal{U}[t,T]$ be given. Then there exists a
sequence $(u^{k})_{k\geq1}$ in $\mathbb{U}[t,T]$ such that%
\[
\lim_{k\rightarrow\infty}\mathbb{\hat{E}}[\int_{t}^{T}|u_{s}-u_{s}^{k}%
|^{2}ds]=0.
\]

\end{lemma}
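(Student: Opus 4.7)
My plan is to approximate $u$ in three stages: (i) by a step-in-time process with Lipschitz coefficients via density in $M_G^2$, (ii) by discretizing the spatial randomness into finitely many cells whose indicators lie in $L_G^2$, and (iii) by replacing the value on each cell by its nearest point in $U$. The key input throughout is Theorem \ref{quasi} applied to canonical Brownian motion ($b=0$, $h=0$, $\sigma=I$, non-degenerate), which supplies the required $L_G^2$ cell-indicators.

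By definition of $M_G^2(t,T;\mathbb{R}^m)$, pick $v^k=\sum_{i=0}^{N_k-1}\eta_i^k I_{[t_i^k,t_{i+1}^k)}\in M_G^0(t,T)$ with $t_0^k=t$, $\eta_i^k\in L_{ip}(\Omega_{t_i^k};\mathbb{R}^m)$, and $\|v^k-u\|_{M_G^2}\to 0$. After inserting dummy times, I may assume every $\eta_i^k$ depends on a common pre-$t$ vector $Y=(B_{s_1},\ldots,B_{s_q})$ with $s_q=t$; writing $B_r=B_t+(B_r-B_t)$ for the post-$t$ times yields $\eta_i^k=\psi_i^k(Y,Z_i^k)$ with $\psi_i^k\in C_{b,Lip}$ and $Z_i^k=(B_r-B_t)_{r\in(t,t_i^k]}$. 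For each $k$, fix $R_k>0$ large and $\delta_k>0$ small (to be tuned). Partition $[-R_k,R_k]^{qd}$ and each $Z_i^k$-range into axis-aligned boxes $\{C_\ell\}$, $\{D_m^i\}$ of side $\delta_k$, together with a single ``outside'' cell. Theorem \ref{quasi} yields $I_{\{Y\in C_\ell\}}\in L_G^2(\Omega_t)$ and $I_{\{Z_i^k\in D_m^i\}}\in L_G^2(\Omega_{t_i^k}^t)$, and the outside indicators lie in the same spaces by linearity. Choose representatives $y_\ell\in C_\ell$, $z_m^i\in D_m^i$, let $a_{\ell,m,i,k}\in U$ be a nearest point of $U$ to $\psi_i^k(y_\ell,z_m^i)$, and define
\[
u^{k,\ell}_s:=\sum_{i,m}a_{\ell,m,i,k}\,I_{\{Z_i^k\in D_m^i\}}\,I_{[t_i^k,t_{i+1}^k)}(s),\qquad \tilde{v}^k_s:=\sum_\ell I_{\{Y\in C_\ell\}}\,u^{k,\ell}_s.
\]
By Remark \ref{quasi1}, $u^{k,\ell}\in M_G^{2,t}(t,T;\mathbb{R}^m)$; being $U$-valued, $u^{k,\ell}\in\mathcal{U}^t[t,T]$, so together with $I_{\{Y\in C_\ell\}}\in L_G^2(\Omega_t)$ this puts $\tilde{v}^k\in\mathbb{U}[t,T]$.

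For the error, on each bounded joint cell I bound $|\tilde{v}^k_s-v^k_s|\le|a_{\ell,m,i,k}-\psi_i^k(y_\ell,z_m^i)|+|\psi_i^k(y_\ell,z_m^i)-\psi_i^k(Y,Z_i^k)|$; the second term is $\le L\delta_k\sqrt{(q+\dim Z_i^k)d}$ by Lipschitz continuity, while the first equals $d(\psi_i^k(y_\ell,z_m^i),U)\le|\psi_i^k(y_\ell,z_m^i)-u_s|\le L\delta_k\sqrt{(q+\dim Z_i^k)d}+|v^k_s-u_s|$ since $u_s\in U$. The outer-cell contribution is dominated by $2\|\psi_i^k\|_\infty$ times the indicator of $\{|Y|>R_k\}\cup\bigcup_i\{|Z_i^k|>R_k\}$, whose $G$-expectation tends to $0$ as $R_k\to\infty$ by Chebyshev applied to the finite second $G$-moments of Brownian increments. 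Choosing $\delta_k$ small and $R_k$ large depending on $k$, I arrive at $\|\tilde{v}^k-u\|_{M_G^2}\le 2\|v^k-u\|_{M_G^2}+\varepsilon_k$ with $\varepsilon_k\to 0$, whence $\tilde{v}^k\to u$ in $M_G^2$.

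The main obstacle is preserving the $U$-valued constraint while keeping membership in $M_G^{2,t}$: a pointwise projection of a continuous approximation onto a (possibly non-convex) compact $U$ need not be quasi-continuous, so one cannot simply project $v^k$. The remedy above is to first discretize both the pre-$t$ and post-$t$ information via cell indicators (legitimized by Theorem \ref{quasi}), and only then to replace the piecewise-constant output by its nearest $U$-value constant-by-constant, so the final approximation lies in $L_G^2$ by construction and the projection error is controlled by $|v^k-u|$ together with the cell diameter.
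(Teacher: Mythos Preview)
Your proof is correct and follows essentially the same route as the paper: approximate $u$ by a step process in $M_G^0$, separate pre-$t$ and post-$t$ Brownian coordinates, discretize both via finite cubes, invoke Theorem~\ref{quasi} to place the resulting indicators in $L_G^2(\Omega_t)$ and $L_G^2(\Omega_{t_i}^t)$, and finally replace each cell-value by its nearest point in $U$. The only noteworthy difference is in how the $U$-projection error is controlled: the paper introduces the enlarged set $U^\varepsilon=\{a:d(a,U)\le\sqrt\varepsilon/4\}$ and shows the step approximation rarely leaves $U^\varepsilon$, whereas you bound $d(\psi_i^k(y_\ell,z_m^i),U)\le|\psi_i^k(y_\ell,z_m^i)-u_s|$ directly using $u_s\in U$; both arguments are valid and yield the same conclusion.
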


\begin{proof}
For each $\varepsilon>0$, we only need to prove that there exists a process
$v\in\mathbb{U}[t,T]$ such that $\mathbb{\hat{E}}[\int_{t}^{T}|u_{s}%
-v_{s}|^{2}ds]\leq\varepsilon$. Since $u\in M_{G}^{2}(t,T;\mathbb{R}^{m})$,
there exists a sequence processes $v^{k}\in M_{G}^{0}(t,T;\mathbb{R}^{m})$
such that $\mathbb{\hat{E}}[\int_{t}^{T}|u_{s}-v_{s}^{k}|^{2}ds]\rightarrow0$.
Set $U^{\varepsilon}:=\{a\in\mathbb{R}^{m}:d(a,U)\leq\frac{\sqrt{\varepsilon}%
}{4}\}$, then%
\[
\mathbb{\hat{E}}[\int_{t}^{T}|u_{s}-v_{s}^{k}|^{2}ds]\geq\mathbb{\hat{E}}%
[\int_{t}^{T}|u_{s}-v_{s}^{k}|^{2}I_{\{v_{s}^{k}\not \in U^{\varepsilon}%
\}}ds]\geq\frac{\varepsilon}{16}\mathbb{\hat{E}}[\int_{t}^{T}I_{\{v_{s}%
^{k}\not \in U^{\varepsilon}\}}ds],
\]
which implies that $\mathbb{\hat{E}}[\int_{t}^{T}I_{\{v_{s}^{k}\not \in
U^{\varepsilon}\}}ds]\rightarrow0$. Thus there exists a $k_{0}\geq1$ such
that
\[
\mathbb{\hat{E}}[\int_{t}^{T}|u_{s}-v_{s}^{k_{0}}|^{2}ds]\leq\frac
{\varepsilon}{4},\text{ }\mathbb{\hat{E}}[\int_{t}^{T}I_{\{v_{s}^{k_{0}%
}\not \in U^{\varepsilon}\}}ds]\leq\frac{\varepsilon}{16M^{2}},
\]
where $M=\sup\{|a|:a\in U\}$. Set $\bar{v}=v^{k_{0}}$, we can write $\bar{v}$
as%
\[
\bar{v}_{s}=\varphi_{0}(\xi_{0})I_{[t_{0},t_{1})}(s)+\sum_{i=1}^{N-1}%
\varphi_{i}(\xi_{0},\xi_{i})I_{[t_{i},t_{i+1})}(s),
\]
where $t=t_{0}<t_{1}<\cdots<t_{N}=T$, $\xi_{0}=(B_{s_{1}^{0}},\ldots
,B_{s_{k_{0}}^{0}})$ for $s_{j}^{0}\in\lbrack0,t]$, $\xi_{i}=(B_{s_{1}^{i}%
}-B_{t},\ldots,B_{s_{k_{i}}^{i}}-B_{t})$ for $s_{j}^{i}\in\lbrack t,t_{i}]$,
$i\geq1$, $\varphi_{i}\in C_{b.Lip}(\mathbb{R}^{n_{i}};\mathbb{R}^{m})$ with
$n_{0}=dk_{0}$, $n_{i}=d(k_{0}+k_{i})$ for $i\geq1$. Obviously, we can find
two constants $\bar{M}>0$ and $L>0$ such that for $i\leq N-1$,%
\[
|\varphi_{i}|\leq\bar{M},\text{ }|\varphi_{i}(x^{i})-\varphi_{i}(\bar{x}%
^{i})|\leq L|x^{i}-\bar{x}^{i}|\text{ for }x^{i},\bar{x}^{i}\in\mathbb{R}%
^{n_{i}}.
\]
For each $k\geq1$, we can find finite nonempty cubes $A_{j}^{i,k}%
\subset\mathbb{R}^{dk_{i}}$, $i\geq0$, $j=1,\ldots,l_{i}^{k}-1$, such that
$[-ke^{i},ke^{i})=\cup_{j\leq l_{i}^{k}-1}A_{j}^{i,k}$ with $e^{i}%
=[1,\ldots,1]^{T}\in\mathbb{R}^{dk_{i}}$ and $\rho(A_{j}^{i,k}):=\sup
\{|x^{i}-\bar{x}^{i}|:x^{i},\bar{x}^{i}\in A_{j}^{i,k}\} \leq\frac{1}{k}$. Set
$A_{l_{i}^{k}}^{i,k}=\mathbb{R}^{dk_{i}}\backslash\lbrack-ke^{i},ke^{i})$ and%
\begin{align*}
\bar{v}_{s}^{k} =  &  (\sum_{j_{0}\leq l_{0}^{k}}\varphi_{0}(x_{j_{0}}%
^{0,k})I_{\{ \xi_{0}\in A_{j_{0}}^{0,k}\}})I_{[t_{0},t_{1})}(s)\\
&  +\sum_{i=1}^{N-1}(\sum_{j_{0}\leq l_{0}^{k},j_{i}\leq l_{i}^{k}}\varphi
_{i}(x_{j_{0}}^{0,k},x_{j_{i}}^{i,k})I_{\{ \xi_{0}\in A_{j_{0}}^{0,k}\}}I_{\{
\xi_{i}\in A_{j_{i}}^{i,k}\}})I_{[t_{i},t_{i+1})}(s)\\
=  &  \sum_{j_{0}\leq l_{0}^{k}}I_{\{ \xi_{0}\in A_{j_{0}}^{0,k}\}}\left(
\varphi_{0}(x_{j_{0}}^{0,k})I_{[t_{0},t_{1})}(s)+\sum_{i=1}^{N-1}(\sum
_{j_{i}\leq l_{i}^{k}}\varphi_{i}(x_{j_{0}}^{0,k},x_{j_{i}}^{i,k})I_{\{
\xi_{i}\in A_{j_{i}}^{i,k}\}})I_{[t_{i},t_{i+1})}(s)\right)  ,
\end{align*}
where $x_{j}^{i,k}$ is one point belonging to $A_{j}^{i,k}$ for $i\geq0$ and
$j\leq l_{i}^{k}$. By Theorem \ref{quasi} we can get $I_{\{ \xi_{0}\in
A_{j_{0}}^{0,k}\}}\in L_{G}^{2}(\Omega_{t})$ and $I_{\{ \xi_{i}\in A_{j_{i}%
}^{i,k}\}}\in L_{G}^{2}(\Omega_{t_{i}}^{t})$ for $i\geq1$. Then we have%
\begin{align*}
\mathbb{\hat{E}}[\int_{t}^{T}|\bar{v}_{s}-\bar{v}_{s}^{k}|^{2}ds]  &  \leq
\sum_{i=0}^{N-1}\mathbb{\hat{E}}[|\bar{v}_{t_{i}}-\bar{v}_{t_{i}}^{k}%
|^{2}](t_{i+1}-t_{i})\\
&  \leq\sum_{i=0}^{N-1}\mathbb{\hat{E}}[\frac{2L^{2}}{k^{2}}+\frac{4\bar
{M}^{2}}{k^{2}}(|\xi_{0}|^{2}+|\xi_{i}|^{2})](t_{i+1}-t_{i})\\
&  \rightarrow0\text{ as }k\rightarrow\infty.
\end{align*}
We set%
\[
\tilde{v}_{s}^{k}=\sum_{j_{0}\leq l_{0}^{k}}I_{\{ \xi_{0}\in A_{j_{0}}%
^{0,k}\}}\left(  \tilde{\varphi}_{0}(x_{j_{0}}^{0,k})I_{[t_{0},t_{1})}%
(s)+\sum_{i=1}^{N-1}(\sum_{j_{i}\leq l_{i}^{k}}\tilde{\varphi}_{i}(x_{j_{0}%
}^{0,k},x_{j_{i}}^{i,k})I_{\{ \xi_{i}\in A_{j_{i}}^{i,k}\}})I_{[t_{i}%
,t_{i+1})}(s)\right)  ,
\]
where $\tilde{\varphi}_{i}(x_{j_{0}}^{0,k},x_{j_{i}}^{i,k})$ is one point in
$U$ such that $|\varphi_{i}(x_{j_{0}}^{0,k},x_{j_{i}}^{i,k})-\tilde{\varphi
}_{i}(x_{j_{0}}^{0,k},x_{j_{i}}^{i,k})|=d(\varphi_{i}(x_{j_{0}}^{0,k}%
,x_{j_{i}}^{i,k}),U)$. By Remark \ref{quasi1}, it is easy to verify that
$\tilde{v}^{k}\in\mathbb{U}[t,T]$ and%
\[
\tilde{\varphi}_{0}(x_{j_{0}}^{0,k})I_{[t_{0},t_{1})}(s)+\sum_{i=1}^{N-1}%
(\sum_{j_{i}\leq l_{i}^{k}}\tilde{\varphi}_{i}(x_{j_{0}}^{0,k},x_{j_{i}}%
^{i,k})I_{\{ \xi_{i}\in A_{j_{i}}^{i,k}\}})I_{[t_{i},t_{i+1})}(s)\in
\mathbb{U}^{t}[t,T].
\]
Note that%
\begin{align*}
|u_{s}-\tilde{v}_{s}^{k}|^{2}  &  =|u_{s}-\tilde{v}_{s}^{k}|^{2}I_{\{ \bar
{v}_{s}\not \in U^{\varepsilon}\}}+|u_{s}-\tilde{v}_{s}^{k}|^{2}I_{\{ \bar
{v}_{s}\in U^{\varepsilon}\}}\\
&  \leq4M^{2}I_{\{ \bar{v}_{s}\not \in U^{\varepsilon}\}}+2|u_{s}-\bar{v}%
_{s}|^{2}+2|\bar{v}_{s}-\tilde{v}_{s}^{k}|^{2}I_{\{ \bar{v}_{s}\in
U^{\varepsilon}\}}\\
&  \leq4M^{2}I_{\{ \bar{v}_{s}\not \in U^{\varepsilon}\}}+2|u_{s}-\bar{v}%
_{s}|^{2}+2(\frac{\sqrt{\varepsilon}}{4}+\frac{2\sqrt{2}L}{k})^{2},
\end{align*}
then we get%
\[
\underset{k\rightarrow\infty}{\limsup}\mathbb{\hat{E}}[\int_{t}^{T}%
|u_{s}-\tilde{v}_{s}^{k}|^{2}ds]\leq\frac{7}{8}\varepsilon.
\]
Thus there exists a $k_{1}\geq1$ such that $\mathbb{\hat{E}}[\int_{t}%
^{T}|u_{s}-\tilde{v}_{s}^{k_{1}}|^{2}ds]\leq\varepsilon$. The proof is
complete by taking $v=\tilde{v}^{k_{1}}\in\mathbb{U}[t,T]$.
\end{proof}

\begin{lemma}
\label{le-dpp-n2}Let $u\in\mathcal{U}^{t}[t,T]$ be given. Then there exists a
sequence $(u^{k})_{k\geq1}$ in $\mathbb{U}^{t}[t,T]$ such that%
\[
\lim_{k\rightarrow\infty}\mathbb{\hat{E}}[\int_{t}^{T}|u_{s}-u_{s}^{k}%
|^{2}ds]=0.
\]

\end{lemma}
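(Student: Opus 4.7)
The plan is to mimic the proof of Lemma \ref{le-dpp-2}, but to stay inside the filtration generated by Brownian increments after time $t$ throughout, so that every indicator we produce lies in $L_G^2(\Omega_{t_i}^t)$ rather than merely $L_G^2(\Omega_{t_i})$.

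First, I would fix $\varepsilon>0$ and use the very definition of $M_G^{2,t}(t,T;\mathbb{R}^m)$ to pick $\bar v\in M_G^{0,t}(t,T;\mathbb{R}^m)$ of the form
\[
\bar v_s=\sum_{i=0}^{N-1}\varphi_i(\xi_i)\,I_{[t_i,t_{i+1})}(s),\qquad \xi_i=(B_{s_1^i}-B_t,\ldots,B_{s_{k_i}^i}-B_t),\ s_j^i\in[t,t_i],
\]
with $\varphi_i\in C_{b.Lip}(\mathbb{R}^{dk_i};\mathbb{R}^m)$, and such that $\mathbb{\hat E}[\int_t^T|u_s-\bar v_s|^2ds]\le \varepsilon/4$ and moreover $\mathbb{\hat E}[\int_t^T I_{\{\bar v_s\notin U^{\varepsilon}\}}ds]\le \varepsilon/(16M^2)$, where $U^{\varepsilon}=\{a\in\mathbb{R}^m:d(a,U)\le\sqrt{\varepsilon}/4\}$ and $M=\sup\{|a|:a\in U\}$. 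The key point is that since $u$ is already in $M_G^{2,t}$, the approximating step processes can be chosen in $M_G^{0,t}$, so the arguments of $\varphi_i$ involve only Brownian increments on $[t,t_i]$; the historical term $\xi_0$ from the previous lemma simply disappears.

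Next, for each $k\ge 1$ I would partition the truncated cube $[-ke^i,ke^i)\subset\mathbb{R}^{dk_i}$ into finitely many sub-cubes $A_j^{i,k}$ of diameter at most $1/k$, add the complementary cell $A_{l_i^k}^{i,k}=\mathbb{R}^{dk_i}\setminus[-ke^i,ke^i)$, pick a representative $x_j^{i,k}\in A_j^{i,k}$, and define
\[
\bar v_s^{k}=\sum_{i=0}^{N-1}\Bigl(\sum_{j\le l_i^k}\varphi_i(x_j^{i,k})\,I_{\{\xi_i\in A_j^{i,k}\}}\Bigr)I_{[t_i,t_{i+1})}(s).
\]
Because each $\xi_i$ is an increment vector after time $t$, Theorem \ref{quasi} (in the increment form stated in its second conclusion) gives $I_{\{\xi_i\in A_j^{i,k}\}}\in L_G^2(\Omega_{t_i}^t)$. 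Using the uniform bound on $\varphi_i$ together with the Lipschitz estimate $|\varphi_i(x)-\varphi_i(y)|\le L|x-y|$ on each small cube and the tail bound via $\mathbb{\hat E}[|\xi_i|^2]$, one obtains $\mathbb{\hat E}[\int_t^T|\bar v_s-\bar v_s^k|^2ds]\to 0$ as $k\to\infty$, exactly as in Lemma \ref{le-dpp-2}.

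Finally, I would project values into $U$: set $\tilde\varphi_i(x_j^{i,k})\in U$ to be any point realizing $d(\varphi_i(x_j^{i,k}),U)$, and define $\tilde v^k$ by replacing $\varphi_i(x_j^{i,k})$ with $\tilde\varphi_i(x_j^{i,k})$ in $\bar v^k$. The resulting process is a finite $U$-valued sum of $I_{A_j^i}I_{[t_i,t_{i+1})}$ with $I_{A_j^i}\in L_G^2(\Omega_{t_i}^t)$, hence belongs to $\mathbb{U}^t[t,T]$ by the very definition. The triangle inequality gives, on the set $\{\bar v_s\in U^{\varepsilon}\}$, $|\bar v_s-\tilde v_s^k|\le \sqrt\varepsilon/4+2\sqrt 2 L/k$, while on $\{\bar v_s\notin U^{\varepsilon}\}$ the crude bound $|u_s-\tilde v_s^k|\le 2M$ combined with the chosen smallness of $\mathbb{\hat E}[\int_t^T I_{\{\bar v_s\notin U^{\varepsilon}\}}ds]$ yields $\limsup_{k}\mathbb{\hat E}[\int_t^T|u_s-\tilde v_s^k|^2ds]\le \tfrac{7}{8}\varepsilon$, so some $\tilde v^{k_1}\in\mathbb{U}^t[t,T]$ satisfies the $\varepsilon$-bound.

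The only genuine obstacle is the measurability constraint that $I_{\{\xi_i\in A_j^{i,k}\}}$ lie in $L_G^2(\Omega_{t_i}^t)$ and not just $L_G^2(\Omega_{t_i})$; this is precisely what Theorem \ref{quasi}, applied to Brownian increments $B_{s_j^i}-B_t$ rather than to an SDE started at $t=0$, delivers, and it is what makes the $t$-superscripted version of the approximation work with essentially no additional technical machinery beyond Lemma \ref{le-dpp-2}.
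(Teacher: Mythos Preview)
Your proposal is correct and is precisely the argument the paper has in mind: the paper's own proof reads ``The proof is the same as Lemma \ref{le-dpp-2}, we omit it,'' and you have accurately unpacked what that means---start from $u\in M_G^{2,t}(t,T)$ so that the step approximation $\bar v$ can be taken in $M_G^{0,t}(t,T)$ (no $\xi_0$ term), and then the increment form of Theorem \ref{quasi} ensures each $I_{\{\xi_i\in A_j^{i,k}\}}\in L_G^2(\Omega_{t_i}^t)$, so the projected process $\tilde v^k$ lands in $\mathbb{U}^t[t,T]$.
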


\begin{proof}
The proof is the same as Lemma \ref{le-dpp-2}, we omit it.
\end{proof}

\begin{lemma}
\label{le-dpp-3} Let $\xi\in\cup_{\varepsilon>0}L_{G}^{2+\varepsilon}%
(\Omega_{t};\mathbb{R}^{n})$, $u\in\mathcal{U}[t,T]$ and $v_{s}=\sum_{i=1}%
^{N}I_{A_{i}}v_{s}^{i}\in\mathbb{U}[t,T]$. Then there exists a constant
$L_{1}$ depending on $T$, $G$ and $C$ such that
\[
\mathbb{\hat{E}}[|Y_{t}^{t,\xi,u}-\sum_{i=1}^{N}I_{A_{i}}Y_{t}^{t,\xi,v^{i}%
}|^{2}]\leq L_{1}\mathbb{\hat{E}}[\int_{t}^{T}|u_{s}-v_{s}|^{2}ds].
\]

\end{lemma}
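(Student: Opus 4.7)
My plan is to exploit the partition structure directly: apply the BSDE stability estimate (Theorem \ref{BSDE-est}) slice-by-slice on each $A_{i}$, and then reassemble the bound using the algebra of generalized conditional $G$-expectations over partitions. This avoids having to prove a ``gluing'' property $Y^{t,\xi,v}=\sum_{i}I_{A_{i}}Y^{t,\xi,v^{i}}$ for the $G$-BSDE, which would itself be nontrivial in the quasi-sure setting.

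The first step is the pointwise identity
\[
\left\vert Y_{t}^{t,\xi,u}-\sum_{i=1}^{N}I_{A_{i}}Y_{t}^{t,\xi,v^{i}}\right\vert ^{2}=\sum_{i=1}^{N}I_{A_{i}}|Y_{t}^{t,\xi,u}-Y_{t}^{t,\xi,v^{i}}|^{2},
\]
valid since $\{A_{i}\}$ is a partition of $\Omega$. For each $i$, I apply Theorem \ref{BSDE-est} with $\xi^{\prime}=\xi$ and control $v=v^{i}$, multiply by the non-negative indicator $I_{A_{i}}\in L_{G}^{2}(\Omega_{t})$, and use the pull-out property of $\mathbb{\hat{E}}_{t}$ to obtain
\[
I_{A_{i}}|Y_{t}^{t,\xi,u}-Y_{t}^{t,\xi,v^{i}}|^{2}\leq\bar{C}_{2}\mathbb{\hat{E}}_{t}\left[I_{A_{i}}\left(|\Phi(X_{T}^{t,\xi,u})-\Phi(X_{T}^{t,\xi,v^{i}})|^{2}+\int_{t}^{T}|\hat{F}_{s}^{i}|^{2}ds\right)\right].
\]
Summing over $i$ and invoking the identity $\sum_{i}\mathbb{\hat{E}}_{t}[I_{A_{i}}\eta_{i}]=\mathbb{\hat{E}}_{t}[\sum_{i}I_{A_{i}}\eta_{i}]$ built into the definition of the generalized conditional $G$-expectation on partitions (Section~2), followed by the tower property, reduces the problem to bounding $\mathbb{\hat{E}}[\sum_{i}I_{A_{i}}(|\Phi(X_{T}^{t,\xi,u})-\Phi(X_{T}^{t,\xi,v^{i}})|^{2}+\int_{t}^{T}|\hat{F}_{s}^{i}|^{2}ds)]$.

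The crucial algebraic fact now is the partition identity $\sum_{i}I_{A_{i}}|u_{s}-v_{s}^{i}|^{2}=|u_{s}-v_{s}|^{2}$, which follows directly from $v=\sum_{i}I_{A_{i}}v^{i}$. Lipschitz assumption (A1) yields a constant $C^{\prime}$ with $|\hat{F}_{s}^{i}|^{2}\leq C^{\prime}(|X_{s}^{t,\xi,u}-X_{s}^{t,\xi,v^{i}}|^{2}+|u_{s}-v_{s}^{i}|^{2})$, and analogously $|\Phi(X_{T}^{t,\xi,u})-\Phi(X_{T}^{t,\xi,v^{i}})|^{2}\leq C^{2}|X_{T}^{t,\xi,u}-X_{T}^{t,\xi,v^{i}}|^{2}$. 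For each forward-state piece I use Theorem \ref{SDE-est} with $\xi^{\prime}=\xi$, multiply by $I_{A_{i}}$, and sum via the partition identity to get
\[
\mathbb{\hat{E}}_{t}\left[\sum_{i}I_{A_{i}}|X_{s}^{t,\xi,u}-X_{s}^{t,\xi,v^{i}}|^{2}\right]\leq\bar{C}\mathbb{\hat{E}}_{t}\left[\int_{t}^{s}|u_{r}-v_{r}|^{2}dr\right].
\]
The sub-Fubini inequality $\mathbb{\hat{E}}[\int_{t}^{T}\eta_{s}ds]\leq\int_{t}^{T}\mathbb{\hat{E}}[\eta_{s}]ds$ handles the outer $ds$-integral on the driver piece, and collecting constants (all depending only on $C$, $\bar{C}$, $\bar{C}_{2}$, $T$, hence on $C$, $G$ and $T$) produces $L_{1}$.

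The main obstacle I anticipate is justifying the interchange $\sum_{i}\mathbb{\hat{E}}_{t}[I_{A_{i}}\eta_{i}]=\mathbb{\hat{E}}_{t}[\sum_{i}I_{A_{i}}\eta_{i}]$: sub-additivity of a sublinear expectation yields only one inequality in general, and the reverse rests on the combination of the partition structure of $\{A_{i}\}$, the non-negative pull-out $I_{A_{i}}\mathbb{\hat{E}}_{t}[\eta_{i}]=\mathbb{\hat{E}}_{t}[I_{A_{i}}\eta_{i}]$, and the explicit definition of generalized conditional $G$-expectation recalled in Section~2. Once this localization is in place, the remainder is routine bookkeeping of Lipschitz constants and invocations of the SDE/BSDE stability theorems.
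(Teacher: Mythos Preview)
Your argument is correct, but it takes a genuinely different route from the paper. The paper does \emph{not} avoid the gluing identity; on the contrary, it establishes it directly: setting $\bar{X}_{s}=\sum_{i}I_{A_{i}}X_{s}^{t,\xi,v^{i}}$, $\bar{Y}_{s}=\sum_{i}I_{A_{i}}Y_{s}^{t,\xi,v^{i}}$, $\bar{Z}_{s}=\sum_{i}I_{A_{i}}Z_{s}^{t,\xi,v^{i}}$, $\bar{K}_{s}=\sum_{i}I_{A_{i}}K_{s}^{t,\xi,v^{i}}$, one multiplies each forward and backward equation by $I_{A_{i}}$ and sums to see that $(\bar{X},\bar{Y},\bar{Z},\bar{K})$ solves the FBSDE with control $v$ and initial data $\xi$. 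Then Theorems \ref{BSDE-est} and \ref{SDE-est} are applied \emph{once} to the pair $(u,v)$, yielding the estimate after a single $G$-expectation. Your approach instead applies the stability estimates $N$ times (once per slice) and reassembles via the partition identities $\sum_{i}I_{A_{i}}|u_{s}-v_{s}^{i}|^{2}=|u_{s}-v_{s}|^{2}$ and $\sum_{i}\mathbb{\hat{E}}_{t}[I_{A_{i}}\eta_{i}]=\mathbb{\hat{E}}_{t}[\sum_{i}I_{A_{i}}\eta_{i}]$.

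What each buys: the paper's gluing is shorter and conceptually cleaner---it identifies $\sum_{i}I_{A_{i}}Y_{t}^{t,\xi,v^{i}}$ as $Y_{t}^{t,\xi,v}$, so the lemma becomes a direct corollary of the stability theorems with no bookkeeping. Your slice-by-slice method is more elementary in that it never needs to check that $\bar{K}$ is a decreasing $G$-martingale (the one place where gluing requires a moment's thought), but it pays for this with the localization algebra you flag as the main obstacle. That obstacle is in fact harmless here: the pull-out $I_{A_{i}}\mathbb{\hat{E}}_{t}[\eta]=\mathbb{\hat{E}}_{t}[I_{A_{i}}\eta]$ (Proposition 2.5 in \cite{HJPS1}) combined with the very definition of generalized conditional $G$-expectation on partitions in Section~2 gives the equality you need, so your worry about sub-additivity does not materialize. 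Note, incidentally, that your remark that gluing ``would itself be nontrivial in the quasi-sure setting'' overstates the difficulty---the paper dispatches it in three lines.
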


\begin{proof}
Consider the following equations:%
\begin{align*}
dX_{s}^{t,\xi,v^{i}}  &  =b(s,X_{s}^{t,\xi,v^{i}},v_{s}^{i})ds+h_{ij}%
(s,X_{s}^{t,\xi,v^{i}},v_{s}^{i})d\langle B^{i},B^{j}\rangle_{s}%
+\sigma(s,X_{s}^{t,\xi,v^{i}},v_{s}^{i})dB_{s},\\
X_{t}^{t,\xi,v^{i}}  &  =\xi,\text{\ }s\in\lbrack t,T]\text{, }i=1,...,N.
\end{align*}%
\begin{align*}
-dY_{s}^{t,\xi,v^{i}}  &  =f(s,X_{s}^{t,\xi,v^{i}},Y_{s}^{t,\xi,v^{i}}%
,Z_{s}^{t,\xi,v^{i}},v_{s}^{i})ds+g_{ij}(s,X_{s}^{t,\xi,v^{i}},Y_{s}%
^{t,\xi,v^{i}},Z_{s}^{t,\xi,v^{i}},v_{s}^{i})d\langle B^{i},B^{j}\rangle_{s}\\
&  -Z_{s}^{t,\xi,v^{i}}dB_{s}-dK_{s}^{t,\xi,v^{i}},\\
Y_{T}^{t,\xi,v^{i}}  &  =\Phi(X_{T}^{t,\xi,v^{i}}),\text{ \ \ \ }s\in\lbrack
t,T]\text{, }i=1,...,N.
\end{align*}
For $s\in\lbrack t,T]$, we set
\[
\bar{X}_{s}^{t,\xi,v}=\sum_{i=1}^{N}I_{A_{i}}X_{s}^{t,\xi,v^{i}},\bar{Y}%
_{s}^{t,\xi,v}=\sum_{i=1}^{N}I_{A_{i}}Y_{s}^{t,\xi,v^{i}},\bar{Z}_{s}%
^{t,\xi,v}=\sum_{i=1}^{N}I_{A_{i}}Z_{s}^{t,\xi,v^{i}},\bar{K}_{s}^{t,\xi
,v}=\sum_{i=1}^{N}I_{A_{i}}K_{s}^{t,\xi,v^{i}}.
\]
Multiplying $I_{A_{i}}$ on both sides of the above equations and summing up,
we have%
\begin{align*}
d\bar{X}_{s}^{t,\xi,v}  &  =b(s,\bar{X}_{s}^{t,\xi,v},v_{s})ds+h_{ij}%
(s,\bar{X}_{s}^{t,\xi,v},v_{s})d\langle B^{i},B^{j}\rangle_{s}+\sigma
(s,\bar{X}_{s}^{t,\xi,v},v_{s})dB_{s},\\
\bar{X}_{t}^{t,\xi,v}  &  =\xi,\text{ \ }s\in\lbrack t,T],
\end{align*}%
\begin{align*}
-d\bar{Y}_{s}^{t,\xi,v}  &  =f(s,\bar{X}_{s}^{t,\xi,v},\bar{Y}_{s}^{t,\xi
,v},\bar{Z}_{s}^{t,\xi,v},v_{s})ds+g_{ij}(s,\bar{X}_{s}^{t,\xi,v},\bar{Y}%
_{s}^{t,\xi,v},\bar{Z}_{s}^{t,\xi,v},v_{s})d\langle B^{i},B^{j}\rangle_{s}\\
&  -Z_{s}^{t,\xi,v^{i}}dB_{s}-d\bar{K}_{s}^{t,\xi,v},\\
\bar{Y}_{T}^{t,\xi,v}  &  =\Phi(\bar{X}_{T}^{t,\xi,v}),\text{ \ \ \ }%
s\in\lbrack t,T].
\end{align*}
By Theorem \ref{BSDE-est}, we can obtain that there exists a constant
$C_{1}>0$ depending on $T$, $G$ and $C$ such that%
\begin{equation}%
\begin{array}
[c]{rl}
& |Y_{t}^{t,\xi,u}-\sum_{i=1}^{N}I_{A_{i}}Y_{t}^{t,\xi,v^{i}}|^{2}%
=|Y_{t}^{t,\xi,u}-\bar{Y}_{t}^{t,\xi,v}|^{2}\\
\leq & C_{1}\mathbb{\hat{E}}_{t}[\mid\Phi(X_{T}^{t,\xi,u})-\Phi(\bar{X}%
_{T}^{t,\xi,v})\mid^{2}+\int\nolimits_{t}^{T}(\mid X_{s}^{t,\xi,u}-\bar{X}%
_{s}^{t,\xi,v}\mid^{2}+\mid u_{s}-v_{s}\mid^{2})ds]\\
\leq & C_{1}\mathbb{\hat{E}}_{t}[C\mid X_{T}^{t,\xi,u}-\bar{X}_{T}^{t,\xi
,v}\mid^{2}+\int\nolimits_{t}^{T}(\mid X_{s}^{t,\xi,u}-\bar{X}_{s}^{t,\xi
,v}\mid^{2}+\mid u_{s}-v_{s}\mid^{2})ds],
\end{array}
\label{estimation-1}%
\end{equation}
where $C$ is the Lipschitz constant of $\Phi$. By Theorem \ref{SDE-est}, there
exists a constant $C_{2}>0$ depending on $T$, $n$, $G$ and $C$ such that%
\[
\mathbb{\hat{E}}[\mid X_{s}^{t,\xi,u}-\bar{X}_{s}^{t,\xi,v}\mid^{2}]\leq
C_{2}\mathbb{\hat{E}}[\int\nolimits_{t}^{T}\mid u_{s}-v_{s}\mid^{2}ds].
\]
Taking $G$-expectation on both sides of (\ref{estimation-1}), we obtain the result.
\end{proof}

\begin{remark}
By the definition of generalized conditional $G$-expectation and Proposition
2.5 in \cite{HJPS1}, the above analysis still holds for the case that
$\{A_{i}\}_{i=1}^{N}$ is a $\mathcal{B}(\Omega_{t})$-partition of $\Omega$.
\end{remark}

\begin{theorem}
\label{the-dpp-1} The value function $V(t,x)$ exists and
\[
V(t,x)=\inf_{u\in\mathcal{U}^{t}[t,T]}Y_{t}^{t,x,u}=\inf_{u\in\mathbb{U}%
^{t}[t,T]}Y_{t}^{t,x,u}.
\]

\end{theorem}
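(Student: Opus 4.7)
Set $\tilde V(t,x):=\inf_{u\in\mathbb{U}^{t}[t,T]}Y_{t}^{t,x,u}$ and $V'(t,x):=\inf_{u\in\mathcal{U}^{t}[t,T]}Y_{t}^{t,x,u}$. The plan is to show, in order, that (a) $Y_{t}^{t,x,u}$ is a deterministic number whenever $u\in\mathcal{U}^{t}[t,T]$, so that $\tilde V$ and $V'$ are ordinary real numbers; (b) $\tilde V(t,x)=V'(t,x)$; and (c) this common number is the essential infimum over all of $\mathcal{U}[t,T]$, in the sense of Definition \ref{esssup}.

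For (a), start with $u\in\mathbb{U}^{t}[t,T]$. Because the initial datum $x$ is deterministic and the coefficients $b,h_{ij},\sigma,f,g_{ij},\Phi$ are deterministic, the solutions $X^{t,x,u}$ and $(Y^{t,x,u},Z^{t,x,u},K^{t,x,u})$ of (\ref{state-1})--(\ref{state-2}) can be constructed by Picard iteration in terms of $u$ and the increments $\{B_{s}-B_{t}:s\in[t,T]\}$; these increments are independent of $\mathcal{F}_{t}$ under $\hat{\mathbb{E}}$, so $Y_{t}^{t,x,u}$, which is simultaneously $\mathcal{F}_{t}$-measurable and a measurable function of the future increments, must be deterministic. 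For a general $u\in\mathcal{U}^{t}[t,T]$, Lemma \ref{le-dpp-n2} yields $u^{k}\in\mathbb{U}^{t}[t,T]$ with $\hat{\mathbb{E}}[\int_{t}^{T}|u-u^{k}|^{2}ds]\to 0$; combining Theorem \ref{SDE-est} with Theorem \ref{BSDE-est} gives $|Y_{t}^{t,x,u^{k}}-Y_{t}^{t,x,u}|\to 0$ in $L_{G}^{2}(\Omega_{t})$, so the $Y_{t}^{t,x,u^{k}}$ (deterministic) converge q.s.\ along a subsequence to $Y_{t}^{t,x,u}$, which is therefore deterministic. This gives (a) and moreover $V'(t,x)=\tilde V(t,x)$: the inclusion $\mathbb{U}^{t}[t,T]\subset\mathcal{U}^{t}[t,T]$ gives $V'\le \tilde V$, and the density argument just given gives the reverse inequality.

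For (c), fix an arbitrary $u\in\mathcal{U}[t,T]$. By Lemma \ref{le-dpp-2} choose $\hat u^{k}=\sum_{i=1}^{N_{k}}I_{A_{i}^{k}}u^{i,k}\in\mathbb{U}[t,T]$ with $u^{i,k}\in\mathcal{U}^{t}[t,T]$ and $\hat{\mathbb{E}}[\int_{t}^{T}|u-\hat u^{k}|^{2}ds]\to 0$. Lemma \ref{le-dpp-3} (applied with $v=\hat u^{k}$) then gives
\[
\hat{\mathbb{E}}\!\left[\,\Big|Y_{t}^{t,x,u}-\sum_{i=1}^{N_{k}}I_{A_{i}^{k}}Y_{t}^{t,x,u^{i,k}}\Big|^{2}\right]\le L_{1}\hat{\mathbb{E}}\!\left[\int_{t}^{T}|u_{s}-\hat u_{s}^{k}|^{2}ds\right]\longrightarrow 0.
\]
By (a), each $Y_{t}^{t,x,u^{i,k}}$ is a deterministic number $\ge V'(t,x)$, and the $A_{i}^{k}$ partition $\Omega$, so $\sum_{i}I_{A_{i}^{k}}Y_{t}^{t,x,u^{i,k}}\ge V'(t,x)$ q.s. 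Writing $(V'(t,x)-Y_{t}^{t,x,u})^{+}\le (V'(t,x)-S_{k})^{+}+|S_{k}-Y_{t}^{t,x,u}|$ and taking $\hat{\mathbb{E}}$, the first term is zero and the second tends to $0$, so $V'(t,x)\le Y_{t}^{t,x,u}$ q.s., verifying (i) of Definition \ref{esssup} with $\zeta=V'(t,x)$. For (ii), if $\eta\le Y_{t}^{t,x,u}$ q.s.\ for all $u\in\mathcal{U}[t,T]$, then in particular this holds for every $u\in\mathbb{U}^{t}[t,T]$, for which $Y_{t}^{t,x,u}$ is the deterministic number $Y_{t}^{t,x,u}\in\mathbb{R}$; taking infimum over these $u$ gives $\eta\le\tilde V(t,x)=V'(t,x)$ q.s. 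Thus $V(t,x)$ exists and equals $V'(t,x)=\tilde V(t,x)$, which is the claimed identity.

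The only non-routine point is the determinism assertion (a): once that is in hand, the rest is a standard density-plus-stability argument using the three lemmas and the BSDE estimate. The delicate issue is handling the $G$-framework independence carefully enough to conclude that an $\mathcal{F}_{t}$-measurable random variable depending solely on increments after $t$ must reduce to a constant, which is where one leans on the construction of $(X^{t,x,u},Y^{t,x,u},Z^{t,x,u},K^{t,x,u})$ for step functions $u\in\mathbb{U}^{t}[t,T]$ and on the quasi-continuity provided by Theorem \ref{quasi}.
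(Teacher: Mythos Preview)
Your proof is correct and follows essentially the same route as the paper: establish that $Y_{t}^{t,x,u}$ is deterministic for $u\in\mathcal{U}^{t}[t,T]$, use Lemma~\ref{le-dpp-n2} to identify the two infima over $\mathcal{U}^{t}[t,T]$ and $\mathbb{U}^{t}[t,T]$, and then use Lemmas~\ref{le-dpp-2} and~\ref{le-dpp-3} to show the deterministic infimum is the q.s.\ essential infimum over all of $\mathcal{U}[t,T]$. The paper simply asserts the determinism step as ``easy to check'' and passes to a q.s.\ convergent subsequence rather than your $(V'-Y)^{+}$ trick, but these are cosmetic differences; you also need not invoke Theorem~\ref{quasi} directly here, since its role is already absorbed into the proof of Lemma~\ref{le-dpp-2}.
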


\begin{proof}
For each $v\in\mathcal{U}^{t}[t,T]$, it is easy to check that $Y_{t}^{t,x,v}$
is a constant. In the following, we prove that $\underset{u\in\mathcal{U}%
[t,T]}{ess\inf}Y_{t}^{t,x,u}=\inf_{v\in\mathcal{U}^{t}[t,T]}Y_{t}^{t,x,v}%
,\;$q.s.. Since $\mathcal{U}^{t}[t,T]\subset\mathcal{U}[t,T]$, we only need to
show that $Y_{t}^{t,x,u}\geq\inf_{v\in\mathcal{U}^{t}[t,T]}Y_{t}^{t,x,v}$ q.s.
for each $u\in\mathcal{U}[t,T]$. For each fixed $u\in\mathcal{U}[t,T],$ by
Lemma \ref{le-dpp-2}, there exists a sequence $u^{k}=\sum_{i=1}^{N_{k}%
}I_{A_{i}^{k}}v^{i,k}\in\mathbb{U}[t,T]$, $k=1,2,...,$ such that
\[
\mathbb{\hat{E}}[\int_{t}^{T}|u_{s}-\sum_{i=1}^{N_{k}}I_{A_{i}^{k}}%
v^{i,k}|^{2}ds]\rightarrow0.
\]
By Lemma \ref{le-dpp-3},
\[%
\begin{array}
[c]{rl}
& \mathbb{\hat{E}}[\mid Y_{t}^{t,x,u}-\sum_{i=1}^{N_{k}}I_{A_{i}^{k}}%
Y_{t}^{t,x,v^{i,k}}\mid^{2}]\\
\leq & L_{1}\mathbb{\hat{E}}[\int_{t}^{T}|u_{s}-\sum_{i=1}^{N_{k}}I_{A_{i}%
^{k}}v^{i,k}|^{2}ds].
\end{array}
\]
It yields that $\sum_{i=1}^{N_{k}}I_{A_{i}^{k}}Y_{t}^{t,x,v^{i,k}}$ converges
to $Y_{t}^{t,x,u}$ in $\mathbb{L}_{G}^{2}$. Then there exists a subsequence
(for simplicity, we still denote it by $\{ \sum_{i=1}^{N_{k}}I_{A_{i}^{k}%
}Y_{t}^{t,x,v^{i,k}}\}$) which converges to $Y_{t}^{t,x,u}$ q.s.. Note that
\[
\sum_{i=1}^{N_{k}}I_{A_{i}^{k}}Y_{t}^{t,x,v^{i,k}}\geq\inf_{v\in
\mathcal{U}^{t}[t,T]}Y_{t}^{t,x,v}\text{ q.s.},
\]
then we have $Y_{t}^{t,x,u}\geq\inf_{v\in\mathcal{U}^{t}[t,T]}Y_{t}^{t,x,v}$
q.s.. Thus
\[
V(t,x)=\underset{u\in\mathcal{U}[t,T]}{ess\inf}Y_{t}^{t,x,u}=\inf
_{v\in\mathcal{U}^{t}[t,T]}Y_{t}^{t,x,v}.
\]
Similarly, by Lemmas \ref{le-dpp-n2} and \ref{le-dpp-3}, we can get
$\inf_{u\in\mathcal{U}^{t}[t,T]}Y_{t}^{t,x,u}=\inf_{u\in\mathbb{U}^{t}%
[t,T]}Y_{t}^{t,x,u}$. The proof is complete.
\end{proof}

\begin{lemma}
\label{le-dpp-4} There exists a constant $L_{2}>0$ depending on $T$, $G$ and
$C$ such that
\[
\mid V(t,x)-V(t,y)\mid\leq L_{2}\mid x-y\mid\text{ for any }x,y\in
\mathbb{R}^{n}.
\]

\end{lemma}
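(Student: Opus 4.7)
The plan is to reduce the Lipschitz estimate for $V$ to a uniform-in-$u$ Lipschitz estimate for $Y_t^{t,x,u}$, exploiting the representation $V(t,x)=\inf_{u\in\mathcal{U}^{t}[t,T]}Y_{t}^{t,x,u}$ from Theorem \ref{the-dpp-1}. The advantage is that for $u\in\mathcal{U}^{t}[t,T]$ and constant initial data, $Y_{t}^{t,x,u}$ is itself a constant, so working with infima of real numbers causes no measurability trouble.

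First I would fix $u\in\mathcal{U}^{t}[t,T]$ and apply Theorem \ref{BSDE-est} with $\xi=x$, $\xi'=y$ and the same control $v=u$. This gives
\[
|Y_{t}^{t,x,u}-Y_{t}^{t,y,u}|^{2}\leq \bar{C}_{2}\,\mathbb{\hat{E}}_{t}\!\Bigl[|\Phi(X_{T}^{t,x,u})-\Phi(X_{T}^{t,y,u})|^{2}+\int_{t}^{T}|\hat{F}_{s}|^{2}ds\Bigr],
\]
where, since $u=v$, the Lipschitz assumption (A1) on $f$ and $g_{ij}$ yields $|\hat{F}_{s}|\leq C'|X_{s}^{t,x,u}-X_{s}^{t,y,u}|$ for a constant $C'$ depending only on $C$ and $d$. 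The left-hand side is deterministic, so I can take $\mathbb{\hat{E}}$ of both sides and use $\mathbb{\hat{E}}[\mathbb{\hat{E}}_{t}[\,\cdot\,]]=\mathbb{\hat{E}}[\,\cdot\,]$.

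Next I would invoke Theorem \ref{SDE-est} with $\xi=x$, $\xi'=y$ and $v=u$ to get $\mathbb{\hat{E}}[|X_{s}^{t,x,u}-X_{s}^{t,y,u}|^{2}]\leq\bar{C}|x-y|^{2}$ for every $s\in[t,T]$, together with the Lipschitz property of $\Phi$ from (A1). Substituting these bounds produces
\[
|Y_{t}^{t,x,u}-Y_{t}^{t,y,u}|^{2}\leq L_{2}^{2}|x-y|^{2}
\]
with $L_{2}$ depending only on $T$, $G$ and $C$ (in particular, independent of $u$).

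Finally, since the estimate is uniform in $u\in\mathcal{U}^{t}[t,T]$, for any such $u$ I have $Y_{t}^{t,y,u}\leq Y_{t}^{t,x,u}+L_{2}|x-y|$, and taking the infimum over $u$ on both sides together with Theorem \ref{the-dpp-1} yields $V(t,y)\leq V(t,x)+L_{2}|x-y|$. Exchanging the roles of $x$ and $y$ completes the proof. There is no real obstacle here: the only thing to be a little careful about is to apply the BSDE estimate with identical controls (so $\hat{F}_{s}$ depends only on the $X$-difference) and to use the $\mathcal{U}^{t}$-representation of $V$ so that the infimum is literally an infimum of real numbers rather than a $q.s.$ essential infimum.
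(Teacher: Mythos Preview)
Your proposal is correct and follows essentially the same approach as the paper: apply Theorem \ref{BSDE-est} with the same control to reduce to an $X$-difference, use Theorem \ref{SDE-est} to bound that by $|x-y|^{2}$, and pass to the infimum via the representation $V(t,x)=\inf_{u\in\mathcal{U}^{t}[t,T]}Y_{t}^{t,x,u}$ from Theorem \ref{the-dpp-1}. The paper phrases the last step as $|\inf_{v}Y_{t}^{t,x,v}-\inf_{v}Y_{t}^{t,y,v}|\leq\sup_{v}|Y_{t}^{t,x,v}-Y_{t}^{t,y,v}|$, which is equivalent to your one-sided infimum argument.
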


\begin{proof}
By Theorems \ref{SDE-est} and \ref{BSDE-est}, for any $x$, $y\in\mathbb{R}%
^{n}$ and $u\in\mathcal{U}^{t}[t,T]$,
\[%
\begin{array}
[c]{rl}
& \mid Y_{t}^{t,x,u}-Y_{t}^{t,y,u}\mid^{2}\\
\leq & C_{1}\mathbb{\hat{E}}[\mid\Phi(X_{T}^{t,x,u})-\Phi(X_{T}^{t,y,u}%
)\mid^{2}+\int\nolimits_{t}^{T}\mid X_{s}^{t,x,u}-X_{s}^{t,y,u}\mid^{2}ds]\\
\leq & C_{2}\mid x-y\mid^{2}.
\end{array}
\]
It is easy to verify that $|\inf_{v\in\mathcal{U}^{t}[t,T]}Y_{t}^{t,x,v}%
-\inf_{v\in\mathcal{U}^{t}[t,T]}Y_{t}^{t,y,v}|\leq\sup_{v\in\mathcal{U}%
^{t}[t,T]}|Y_{t}^{t,x,v}-Y_{t}^{t,y,v}|$. Thus by the above estimate and
Theorem \ref{the-dpp-1}, we obtain the result.
\end{proof}

\begin{lemma}
\label{le-neww-dpp-5}There exists a constant $L_{3}>0$ depending on $T$, $G$
and $C$ such that
\[
\mid V(t,x)\mid\leq L_{3}(1+\mid x\mid)\text{ for any }x\in\mathbb{R}^{n}.
\]

\end{lemma}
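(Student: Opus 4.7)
The plan is to reduce the growth bound on $V(t,x)$ to a uniform bound at $x=0$ via the already-established Lipschitz estimate, and then bound $|V(t,0)|$ by comparing a generic $Y_t^{t,0,u}$ with a reference value produced by a constant control.

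By Lemma \ref{le-dpp-4}, $|V(t,x)| \leq |V(t,0)| + L_{2}|x|$, so it is enough to produce a constant $C_{0}$ with $|V(t,0)|\leq C_{0}$. For this, I would first fix any point $u_{0}\in U$, viewed as a constant control in $\mathcal{U}^{t}[t,T]$. Then $Y_{t}^{t,0,u_{0}}$ is a fixed real number, and by Theorem \ref{the-dpp-1} we have the upper bound $V(t,0)\leq Y_{t}^{t,0,u_{0}}$.

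For the matching lower bound I would apply Theorem \ref{BSDE-est} with $(\xi,u)=(0,u)$ and $(\xi',v)=(0,u_{0})$ for an arbitrary $u\in\mathcal{U}^{t}[t,T]$, obtaining
\[
|Y_{t}^{t,0,u}-Y_{t}^{t,0,u_{0}}|^{2}\leq \bar{C}_{2}\,\mathbb{\hat{E}}_{t}\!\left[|\Phi(X_{T}^{t,0,u})-\Phi(X_{T}^{t,0,u_{0}})|^{2}+\int_{t}^{T}|\hat{F}_{s}|^{2}ds\right].
\]
Using the Lipschitz assumption (A1), the fact that $\Phi$ is Lipschitz, and the compactness of $U$ (so that $|u_{s}-u_{0}|\leq \operatorname{diam}(U)$), both integrands are controlled by $C(|X_{\cdot}^{t,0,u}|+|X_{\cdot}^{t,0,u_{0}}|+1)$. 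Theorem \ref{SDE-est} applied with $\xi=0$ and $p=2$ gives $\mathbb{\hat{E}}_{t}[|X_{s}^{t,0,u}|^{2}]\leq \bar{C}$ with $\bar{C}$ independent of $u$, and similarly for $u_{0}$. Hence $|Y_{t}^{t,0,u}-Y_{t}^{t,0,u_{0}}|^{2}\leq C'$ uniformly in $u$, which implies $|Y_{t}^{t,0,u}|\leq |Y_{t}^{t,0,u_{0}}|+\sqrt{C'}=:C_{0}$ for every $u\in\mathcal{U}^{t}[t,T]$.

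Taking the infimum over $u$ and invoking Theorem \ref{the-dpp-1} yields $|V(t,0)|\leq C_{0}$, and combining with the Lipschitz inequality from Lemma \ref{le-dpp-4} gives $|V(t,x)|\leq C_{0}+L_{2}|x|\leq L_{3}(1+|x|)$ for $L_{3}:=\max(C_{0},L_{2})$. There is no real obstacle here; the only point to be careful about is that all constants in the bounds of $\hat{F}_{s}$ and $\Phi$ must be kept uniform in $u\in\mathcal{U}^{t}[t,T]$, which is exactly what the compactness of $U$ together with the moment estimate in Theorem \ref{SDE-est} provides.
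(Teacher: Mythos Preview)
Your reduction via Lemma~\ref{le-dpp-4} to bounding $|V(t,0)|$ is sound, and the comparison with a reference control $u_{0}$ through Theorem~\ref{BSDE-est} and Theorem~\ref{SDE-est} is carried out correctly. However, there is one loose end: your constant $C_{0}=|Y_{t}^{t,0,u_{0}}|+\sqrt{C'}$ still depends on $t$ through the first term, and the lemma requires a single $L_{3}$ valid for all $t\in[0,T]$. Theorem~\ref{BSDE-est} is a \emph{difference} estimate and does not by itself bound $|Y_{t}^{t,0,u_{0}}|$; to control this quantity uniformly in $t$ you need the a~priori estimate for $G$-BSDEs (Proposition~5.1 in \cite{HJPS1}, used later in the paper), which gives $|Y_{t}^{t,0,u_{0}}|^{2}\leq C\,\mathbb{\hat{E}}[|\Phi(X_{T}^{t,0,u_{0}})|^{2}+(\int_{t}^{T}(1+|X_{s}^{t,0,u_{0}}|)\,ds)^{2}]$ and hence a bound depending only on $T,G,C$.

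Once you invoke that a~priori estimate, the detour through $u_{0}$ becomes redundant: the same inequality applied directly to $Y_{t}^{t,x,u}$ for arbitrary $u\in\mathcal{U}^{t}[t,T]$, combined with the moment bound $\mathbb{\hat{E}}_{t}[|X_{s}^{t,x,u}|^{2}]\leq \bar{C}(1+|x|^{2})$ from Theorem~\ref{SDE-est} and the linear growth of $\Phi,f,g_{ij}$ implied by (A1)--(A2), yields $|Y_{t}^{t,x,u}|\leq L_{3}(1+|x|)$ uniformly in $u$ and $t$, and then one takes the infimum. This is presumably what the paper means by ``similar to Lemma~\ref{le-dpp-4}'' (the paper omits the proof), and it is shorter than your route while using the same ingredients.
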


\begin{proof}
The proof is similar to Lemma \ref{le-dpp-4}, we omit it.
\end{proof}

\begin{theorem}
\label{the-dpp-1-1} For any $\xi\in\cup_{\varepsilon>0}L_{G}^{2+\varepsilon
}(\Omega_{t};\mathbb{R}^{n})$, we have%
\[
V(t,\xi)=\underset{u\in\mathcal{U}[t,T]}{ess\inf}Y_{t}^{t,\xi,u}.
\]

\end{theorem}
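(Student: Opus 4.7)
The plan is to verify that $V(t,\xi)$ (where $V(t,\cdot)$ is the deterministic function of Theorem \ref{the-dpp-1}) satisfies the two defining conditions of Definition \ref{esssup} applied with the random initial condition $\xi$. I would first handle simple $\xi$, then extend to general $\xi\in\cup_{\varepsilon>0}L_G^{2+\varepsilon}(\Omega_t;\mathbb{R}^n)$ by approximation.

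For the simple case, take $\xi=\sum_{i=1}^{N}x_{i}I_{A_{i}}$ with $\{A_{i}\}$ a $\mathcal{B}(\Omega_{t})$-partition satisfying $I_{A_i}\in L_G^{2}(\Omega_t)$ and $x_i\in\mathbb{R}^n$. By the uniqueness of solutions and the pointwise identity $\phi(\sum_{i}I_{A_i}y_i)=\sum_i I_{A_i}\phi(y_i)$ for any $\phi$ (used to rewrite the coefficients), multiplying (\ref{state-1}) and (\ref{state-2}) by $I_{A_i}$ and summing gives $Y_t^{t,\xi,u}=\sum_{i}I_{A_i}Y_t^{t,x_i,u}$ for any admissible $u$. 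For condition (i): $Y_t^{t,\xi,u}=\sum_{i}I_{A_i}Y_t^{t,x_i,u}\geq\sum_{i}I_{A_i}V(t,x_i)=V(t,\xi)$ q.s. For condition (ii): by Theorem \ref{the-dpp-1}, for each $\varepsilon>0$ and each $i$ pick $u^{i,\varepsilon}\in\mathcal{U}^{t}[t,T]$ with $Y_t^{t,x_i,u^{i,\varepsilon}}\leq V(t,x_i)+\varepsilon$, and set $u^{\varepsilon}=\sum_i I_{A_i}u^{i,\varepsilon}\in\mathbb{U}[t,T]\subset\mathcal{U}[t,T]$ (via Remark \ref{quasi1}); then $Y_t^{t,\xi,u^{\varepsilon}}\leq V(t,\xi)+\varepsilon$, so any lower bound $\eta\in L_G^2(\Omega_t)$ satisfies $\eta\leq V(t,\xi)+\varepsilon$, and letting $\varepsilon\to 0$ gives (ii).

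For general $\xi$, I would construct simple approximations $\xi^{m}\to\xi$ in $L_G^{2+\varepsilon}$: first approximate in $L_G^{2+\varepsilon}$-norm by an element of $L_{ip}(\Omega_t;\mathbb{R}^n)$ of the form $\tilde\xi^m=\varphi_m(B_{s_1},\ldots,B_{s_k})$, then discretize $\varphi_m$ by partitioning $\mathbb{R}^{dk}$ into small cubes plus a far-field piece, precisely as in the proof of Lemma \ref{le-dpp-2}. By Theorem \ref{quasi} the resulting indicators lie in $L_G^{2}(\Omega_t)$ (hence in every $L_G^p$, since they are bounded), so $\xi^{m}$ is simple in the above sense. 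Now by Theorem \ref{BSDE-est} combined with Theorem \ref{SDE-est} one obtains $\mathbb{\hat E}[|Y_t^{t,\xi,u}-Y_t^{t,\xi^m,u}|^{2}]\leq C\,\mathbb{\hat E}[|\xi-\xi^m|^{2}]\to 0$ with $C$ independent of $u$, while Lemma \ref{le-dpp-4} gives $|V(t,\xi)-V(t,\xi^m)|\leq L_2|\xi-\xi^m|$ q.s. Extracting a q.s.\ convergent subsequence and passing to the limit in the simple-case condition (i) for $\xi^m$ yields $V(t,\xi)\leq Y_t^{t,\xi,u}$ q.s.\ for every $u\in\mathcal{U}[t,T]$. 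For condition (ii), given $\eta\in L_G^2(\Omega_t)$ with $\eta\leq Y_t^{t,\xi,u}$ q.s.\ for all $u$, apply the simple-case construction to each $\xi^{m}$ to obtain controls $u^{m,k}$ satisfying $Y_t^{t,\xi^{m},u^{m,k}}\to V(t,\xi^{m})$ in $L_G^{2}$ as $k\to\infty$, then choose $k_m$ by a diagonal argument so that $Y_t^{t,\xi,u^{m,k_m}}\to V(t,\xi)$ in $L_G^{2}$; passing to a q.s.\ subsequence gives $\eta\leq V(t,\xi)$ q.s.

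The main obstacle will be controlling the three simultaneous errors in the diagonal argument for (ii): the stability error $\|Y_t^{t,\xi,u^{m,k_m}}-Y_t^{t,\xi^{m},u^{m,k_m}}\|_{2,G}$, the simple-case approximation error $\|Y_t^{t,\xi^{m},u^{m,k_m}}-V(t,\xi^{m})\|_{2,G}$, and the Lipschitz error $\|V(t,\xi^{m})-V(t,\xi)\|_{2,G}$. The first must be uniform in the chosen control, which is exactly what Theorems \ref{SDE-est} and \ref{BSDE-est} provide; without this uniformity, together with the Lipschitz regularity of the deterministic $V$ from Lemma \ref{le-dpp-4}, the limit procedure would fail. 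The existence of a q.s.\ convergent subsequence of a $L_G^2$-Cauchy sequence (a standard consequence of the capacity-Chebyshev inequality under $c$) is the final ingredient that transfers the $L_G^2$-limits back into q.s.\ inequalities.
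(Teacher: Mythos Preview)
Your proposal is correct and follows the same two-step skeleton (verify (i) and (ii) of Definition \ref{esssup}), but differs from the paper in two places.

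First, the paper approximates $\xi$ by simple functions $\xi^{k}=\sum_{i}x_{i}^{k}I_{A_{i}^{k}}$ with an \emph{arbitrary} $\mathcal{B}(\Omega_{t})$-partition, explicitly allowing $I_{A_{i}^{k}}\notin L_{G}^{2}(\Omega_{t})$ and invoking the generalized conditional $G$-expectation (Proposition 2.5 in \cite{HJPS1}). You instead insist on $I_{A_{i}}\in L_{G}^{2}(\Omega_{t})$, manufactured via Theorem \ref{quasi}. Your route costs an extra discretization step but buys access to genuine admissible controls $u^{\varepsilon}=\sum_{i}I_{A_{i}}u^{i,\varepsilon}\in\mathbb{U}[t,T]$; the paper never needs to form such glued controls.

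Second, and more substantively, for condition (ii) in the general case the paper avoids your diagonal construction entirely. It uses the \emph{pointwise q.s.} conditional estimate
\[
\bigl|Y_{t}^{t,\xi,u}-\textstyle\sum_{i}I_{A_{i}^{k}}Y_{t}^{t,x_{i}^{k},u}\bigr|\leq\sqrt{C_{2}\,\mathbb{\hat{E}}_{t}[|\xi-\xi^{k}|^{2}]}\quad\text{q.s.}
\]
(coming from Theorem \ref{BSDE-est}) to get, for every $u\in\mathcal{U}[t,T]$,
\[
\eta\leq\textstyle\sum_{i}I_{A_{i}^{k}}Y_{t}^{t,x_{i}^{k},u}+\sqrt{C_{2}\,\mathbb{\hat{E}}_{t}[|\xi-\xi^{k}|^{2}]}\quad\text{q.s.},
\]
then, restricting to $u\in\mathcal{U}^{t}[t,T]$ (for which each $Y_{t}^{t,x_{i}^{k},u}$ is a constant) and taking the infimum piecewise, obtains $\eta\leq V(t,\xi^{k})+\sqrt{C_{2}\,\mathbb{\hat{E}}_{t}[|\xi-\xi^{k}|^{2}]}$ q.s.\ directly, and finally passes to a q.s.\ convergent subsequence. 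This is shorter than your route of producing near-optimal controls $u^{m,k_{m}}$ and tracking three $L_{G}^{2}$-errors, though your argument is also valid (and in fact no genuine diagonal extraction is needed: $k_{m}=m$ already works, since all three error terms are controlled by $\|\xi-\xi^{m}\|_{2,G}$ or $1/m$).
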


\begin{proof}
First, we prove that $\forall u\in\mathcal{U}[t,T]$, $V(t,\xi)\leq
Y_{t}^{t,\xi,u}\;$q.s..

For a fixed $\xi\in\cup_{\varepsilon>0}L_{G}^{2+\varepsilon}(\Omega
_{t};\mathbb{R}^{n})$, we can find a sequence $\xi^{k}=\sum_{i=1}^{N_{k}}%
x_{i}^{k}I_{A_{i}^{k}}$, $k=1,2,...,$where $x_{i}^{k}\in\mathbb{R}^{n}$ and
$\{A_{i}^{k}\}_{i=1}^{N_{k}}$ is a $\mathcal{B}(\Omega_{t})$-partition of
$\Omega$, such that
\[
\lim_{k\rightarrow\infty}\mathbb{\hat{E}}[|\xi-\xi^{k}|^{2}]=0.
\]
Here $I_{A_{i}^{k}}$ may not in $L_{G}^{2}(\Omega_{t})$. By Lemma
\ref{le-dpp-4}, we have%
\[
\mathbb{\hat{E}}[|V(t,\xi)-V(t,\xi^{k})|^{2}]\leq L_{2}^{2}\mathbb{\hat{E}%
}[|\xi-\xi^{k}|^{2}]\rightarrow0.
\]
By similar analysis as in Lemma \ref{le-dpp-3} and the definition of
generalized conditional $G$-expectation,
\[%
\begin{array}
[c]{rl}
& |Y_{t}^{t,\xi,u}-\sum_{i=1}^{N_{k}}I_{A_{i}^{k}}Y_{t}^{t,x_{i}^{k},u}|^{2}\\
\leq & C_{1}\mathbb{\hat{E}}_{t}[\mid\Phi(X_{T}^{t,\xi,u})-\Phi(\sum
_{i=1}^{N_{k}}I_{A_{i}^{k}}X_{T}^{t,x_{i}^{k},u})\mid^{2}+\int\nolimits_{t}%
^{T}\mid X_{s}^{t,\xi,u}-\sum_{i=1}^{N_{k}}I_{A_{i}^{k}}X_{s}^{t,x_{i}^{k}%
,u}\mid^{2}ds]\\
\leq & C_{2}\mathbb{\hat{E}}_{t}[\mid\xi-\xi^{k}\mid^{2}].
\end{array}
\]
Then
\[
\lim_{k\rightarrow\infty}\mathbb{\hat{E}[}|Y_{t}^{t,\xi,u}-\sum_{i=1}^{N_{k}%
}I_{A_{i}^{k}}Y_{t}^{t,x_{i}^{k},u}|^{2}]=0.
\]
Note that%
\[
V(t,\xi^{k})=\sum_{i=1}^{N_{k}}I_{A_{i}^{k}}V(t,x_{i}^{k})\leq\sum
_{i=1}^{N_{k}}I_{A_{i}^{k}}Y_{t}^{t,x_{i}^{k},u}\; \text{q.s.}.
\]
Thus
\[
V(t,\xi)\leq Y_{t}^{t,\xi,u}\; \text{q.s.}.
\]

Second, we prove that for a given $\eta\in L_{G}^{2}(\Omega_{t})$, if $\forall
u\in\mathcal{U}[t,T]$, $\eta\leq Y_{t}^{t,\xi,u}\;$q.s., then $\eta\leq
V(t,\xi)\;$q.s..

By the above analysis, we know that%
\[
|Y_{t}^{t,\xi,u}-\sum_{i=1}^{N_{k}}I_{A_{i}^{k}}Y_{t}^{t,x_{i}^{k},u}|^{2}\leq
C_{2}\mathbb{\hat{E}}_{t}[\mid\xi-\xi^{k}\mid^{2}]\; \; \text{q.s.}.
\]
Then, for any $u\in\mathcal{U}[t,T]$,%
\[
\eta\leq\sum_{i=1}^{N_{k}}I_{A_{i}^{k}}Y_{t}^{t,x_{i}^{k},u}+\sqrt
{C_{2}\mathbb{\hat{E}}_{t}[\mid\xi-\xi^{k}\mid^{2}]}\; \; \text{q.s.}.
\]
For each fixed $N_{k}$, it yields that%
\[%
\begin{array}
[c]{rl}%
\eta & \leq\sum_{i=1}^{N_{k}}I_{A_{i}^{k}}V(t,x_{i}^{k})+\sqrt{C_{2}%
\mathbb{\hat{E}}_{t}[\mid\xi-\xi^{k}\mid^{2}]}\\
& =V(t,\xi^{k})+\sqrt{C_{2}\mathbb{\hat{E}}_{t}[\mid\xi-\xi^{k}\mid^{2}]}\; \;
\text{q.s.}.
\end{array}
\]
Note that
\[
\lim_{k\rightarrow\infty}\mathbb{\hat{E}}[|V(t,\xi)-V(t,\xi^{k})|^{2}]=0
\]
and%
\[
\lim_{k\rightarrow\infty}\mathbb{\hat{E}}[|\xi-\xi^{k}|^{2}]=0.
\]
Then there exists a subsequence $(\xi^{k_{i}})$\ of $(\xi^{k})$ such that as
$k_{i}\longrightarrow\infty$,%
\[
V(t,\xi^{k_{i}})\longrightarrow V(t,\xi),\; \mathbb{\hat{E}}_{t}[\mid\xi
-\xi^{k_{i}}\mid^{2}]\longrightarrow0,\; \; \text{q.s.}.
\]
Thus $\eta\leq V(t,\xi)\; \;$q.s.. This completes the proof.
\end{proof}

\section{Dynamic programming principle}

For given initial data $(t,x)$, a positive real number $\delta\leq T-t$ and
$\eta\in\cup_{\varepsilon>0}L_{G}^{2+\varepsilon}(\Omega_{t+\delta})$, we
define
\[
\mathbb{G}_{t,t+\delta}^{t,x,u}[\eta]:=\tilde{Y}_{t}^{t,x,u},
\]
where $(X_{s}^{t,x,u},\tilde{Y}_{s}^{t,x,u},\tilde{Z}_{s}^{t,x,u})_{t\leq
s\leq t+\delta}$ is the solution of the following forward and backward
equations:%
\[%
\begin{array}
[c]{rl}%
dX_{s}^{t,x,u}= & b(s,X_{s}^{t,x,u},u_{s})ds+h_{ij}(s,X_{s}^{t,x,u}%
,u_{s})d\langle B^{i},B^{j}\rangle_{s}+\sigma(s,X_{s}^{t,x,u},u_{s})dB_{s},\\
X_{t}^{t,x,u}= & x,
\end{array}
\]

and%
\begin{equation}%
\begin{array}
[c]{ll}%
-d\tilde{Y}_{s}^{t,x,u}= & f(s,X_{s}^{t,x,u},\tilde{Y}_{s}^{t,x,u},\tilde
{Z}_{s}^{t,x,u},u_{s})ds+g_{ij}(s,X_{s}^{t,x,u},\tilde{Y}_{s}^{t,x,u}%
,\tilde{Z}_{s}^{t,x,u},u_{s})d\langle B^{i},B^{j}\rangle_{s}\\
& -\tilde{Z}_{s}^{t,x,u}dB_{s}-d\tilde{K}_{s}^{t,x,u},\\
\tilde{Y}_{t+\delta}^{t,x,u}= & \eta,\text{ \ \ \ }s\in\lbrack t,t+\delta
]\text{.}%
\end{array}
\label{state equ-2}%
\end{equation}
Note that $\mathbb{G}_{t,t+\delta}^{t,x,u}[\cdot]$ is a (backward) semigroup
which was first introduced by Peng in\ \cite{peng-dpp-1}.

Our main result in this section is the following\ dynamic programming principle.

\begin{theorem}
\label{the-dpp-2} Let Assumptions (A1) and (A2) hold. Then for any $t\leq
s\leq T$, $x\in\mathbb{R}^{n}$, we have
\begin{equation}%
\begin{array}
[c]{rl}%
V(t,x)= & \underset{u(\cdot)\in\mathcal{U}[t,s]}{\text{ess}\inf}%
\mathbb{G}_{t,s}^{t,x,u}[V(s,X_{s}^{t,x,u})]\\
= & \underset{u(\cdot)\in\mathcal{U}^{t}[t,s]}{\inf}\mathbb{G}_{t,s}%
^{t,x,u}[V(s,X_{s}^{t,x,u})].
\end{array}
\label{DPP}%
\end{equation}

\end{theorem}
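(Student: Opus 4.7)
The plan is to establish the two inequalities
\[ V(t,x) \geq \underset{u \in \mathcal{U}[t,s]}{\text{ess}\inf} \mathbb{G}_{t,s}^{t,x,u}[V(s, X_s^{t,x,u})] \quad \text{and} \quad V(t,x) \leq \underset{u \in \mathcal{U}[t,s]}{\text{ess}\inf} \mathbb{G}_{t,s}^{t,x,u}[V(s, X_s^{t,x,u})], \]
and then to replace the essential infimum over $\mathcal{U}[t,s]$ by the ordinary infimum over $\mathcal{U}^t[t,s]$ via the density argument already used in Theorem \ref{the-dpp-1} (combining Lemmas \ref{le-dpp-2}, \ref{le-dpp-n2} and \ref{le-dpp-3}). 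Two structural facts underlie the argument: uniqueness of the $G$-SDE and $G$-BSDE yields the flow property $Y_r^{t,x,u} = Y_r^{s, X_s^{t,x,u}, u}$ for $r \in [s,T]$ together with the semigroup identity $Y_t^{t,x,u} = \mathbb{G}_{t,s}^{t,x,u}[Y_s^{t,x,u}]$; and the comparison theorem for $G$-BSDEs makes the map $\eta \mapsto \mathbb{G}_{t,s}^{t,x,u}[\eta]$ monotone.

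For the direction $\geq$, I fix any $u \in \mathcal{U}[t,T]$. Theorem \ref{SDE-est} guarantees $X_s^{t,x,u} \in \bigcup_{\varepsilon > 0} L_G^{2+\varepsilon}(\Omega_s; \mathbb{R}^n)$, so Theorem \ref{the-dpp-1-1} applies to give $Y_s^{s, X_s^{t,x,u}, u} \geq V(s, X_s^{t,x,u})$ q.s. Combined with the flow property and $G$-BSDE comparison on $[t,s]$, this yields $Y_t^{t,x,u} \geq \mathbb{G}_{t,s}^{t,x,u}[V(s, X_s^{t,x,u})]$, and taking the essential infimum in $u$ gives the inequality (restricting $u$ to $[t,s]$ only affects the right-hand side).

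For the direction $\leq$, I fix $\varepsilon > 0$ and $u_1 \in \mathcal{U}^t[t,s]$. Since $V(s, \cdot)$ is Lipschitz (Lemma \ref{le-dpp-4}), cover $\mathbb{R}^n$ by a disjoint family of cubes $\{O_i\}$ of diameter at most $\varepsilon$ with representatives $y_i \in O_i$. By Theorem \ref{the-dpp-1} pick $v^{\varepsilon, i} \in \mathbb{U}^s[s,T]$ with $Y_s^{s, y_i, v^{\varepsilon, i}} \leq V(s, y_i) + \varepsilon$. Put $A_i = \{X_s^{t,x,u_1} \in O_i\}$. Because $u_1$ is a simple process, the $u_1$-controlled SDE for $X^{t,x,u_1}$ has coefficients that are simple in $\omega$, and Theorem \ref{quasi} (with the subsequent remark) yields $I_{A_i} \in L_G^2(\Omega_s)$. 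After a standard truncation in the index $i$, the concatenation
\[ u_r := u_1(r) \mathbf{1}_{[t,s)}(r) + \sum_i I_{A_i} v_r^{\varepsilon, i} \mathbf{1}_{[s,T]}(r) \]
belongs to $\mathcal{U}[t,T]$. Applying Lemma \ref{le-dpp-3} on $[s,T]$ together with the Lipschitz regularity of $V(s, \cdot)$ gives $Y_s^{s, X_s^{t,x,u_1}, u|_{[s,T]}} \leq V(s, X_s^{t,x,u_1}) + O(\varepsilon)$ q.s., and Theorem \ref{BSDE-est} on $[t,s]$ then produces $V(t,x) \leq Y_t^{t,x,u} \leq \mathbb{G}_{t,s}^{t,x,u_1}[V(s, X_s^{t,x,u_1})] + O(\varepsilon)$. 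Letting $\varepsilon \to 0$ and taking infimum in $u_1 \in \mathcal{U}^t[t,s]$ closes the argument; the equality with the ess inf over $\mathcal{U}[t,s]$ then follows by the same approximation as in Theorem \ref{the-dpp-1}.

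\textbf{Main obstacle.} The delicate step is the concatenation in the $\leq$ direction: we must guarantee that the glued control is admissible in the quasi-sure sense — i.e., that $I_{\{X_s^{t,x,u_1} \in O_i\}} \in L_G^2(\Omega_s)$ — and that the local $\varepsilon$-optimizers $v^{\varepsilon, i}$ can be amalgamated with an error that is controlled uniformly in the random initial state $X_s^{t,x,u_1}$. The first issue forces the non-degeneracy hypothesis on $\sigma$ and the use of the quasi-continuity result of \cite{HWZ} (Theorem \ref{quasi}); the second relies on Lemmas \ref{le-dpp-4} and \ref{le-dpp-3}. Together these constitute the ``implied partition'' lemma (Lemma \ref{le-new-dpp1}) highlighted in the introduction, and is the genuinely new ingredient compared with the classical DPP proof of \cite{peng-dpp-1}.
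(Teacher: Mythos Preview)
Your overall architecture matches the paper's, and the $\geq$ direction is fine. The gap is in your implementation of the $\leq$ direction, specifically in the claim that Theorem~\ref{quasi} gives $I_{\{X_s^{t,x,u_1}\in O_i\}}\in L_G^2(\Omega_s)$.

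Theorem~\ref{quasi} has three hypotheses you do not have at hand: the diffusion rows must satisfy $\lambda\le|\sigma_i|\le\Lambda$ (a \emph{non-degeneracy} condition on $\sigma$, which is nowhere assumed---only $G$ is non-degenerate); $b,h_{ij}$ must be bounded; and the coefficients must be deterministic in $(s,x)$, whereas once you plug in a control $u_1\in\mathcal U^t[t,s]$ (even a simple one) the drift and diffusion become $\omega$-dependent. Your remark that ``non-degeneracy of $\sigma$'' is forced is therefore mistaken: the theorem must be proved without it. The paper handles all three issues, and this is precisely the content of Lemmas~\ref{le-new-dpp1} and~\ref{le-new-dpp2}. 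First one restricts to $u\in\mathbb U^t[t,s]$ and decomposes the step control into finitely many \emph{deterministic} controls $u^{J_{i+1}}$, so that the associated flows are driven by deterministic coefficients. Second---and this is the key trick you are missing---one does not partition $X_s^{t,x,u}$ directly but rather the auxiliary process $\tilde X_s^{t,x,u}=X_s^{t,x,u}+(\Lambda+1)(B_s^1-B_t^1)e$, whose diffusion is \emph{artificially made non-degenerate} by the added $(\Lambda+1)e\,dB^1$ term; only then does Theorem~\ref{quasi} apply to each deterministic piece $\tilde X_s^{t,x,J_N}$, and one recovers a discretization $\xi^{k,u}$ of $X_s^{t,x,u}$ by simultaneously partitioning $\tilde X_s^{t,x,u}$ and $B_s^1-B_t^1$. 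Third, the boundedness of $b,h_{ij},\sigma$ is obtained by a separate truncation argument (Lemma~\ref{le-new-dpp2}) with uniform error $O(N^{-1})$.

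In short, your concatenation step as written would fail because $\{X_s^{t,x,u_1}\in O_i\}$ need not have a quasi-continuous indicator under (A1)--(A2) alone; the ``implied partition'' construction is more delicate than a direct cube decomposition of the state.
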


In order to prove Theorem \ref{the-dpp-2}, we need the following lemmas.

\begin{lemma}
\label{le-new-dpp1}Let Assumptions (A1) and (A2) hold. Assume further that
there exist constants $L>0$, $\Lambda>0$ such that $|b|\leq L$, $|h_{ij}|\leq
L$ and $|\sigma_{i}|\leq\Lambda$ for $i\leq n$ and $(t,x,u)\in\lbrack
0,T]\times\mathbb{R}^{n}\times U$, where $\sigma_{i}$ is the $i$-th row of
$\sigma$. Then for any $t<s\leq T$, $x\in\mathbb{R}^{n}$, we have%
\[
V(t,x)\leq\underset{u(\cdot)\in\mathbb{U}^{t}[t,s]}{\inf}\mathbb{G}%
_{t,s}^{t,x,u}[V(s,X_{s}^{t,x,u})].
\]

\end{lemma}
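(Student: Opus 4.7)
The plan is to establish the inequality by a pasting argument: given $u\in\mathbb{U}^{t}[t,s]$, I construct an admissible extension $\tilde u\in\mathcal{U}[t,T]$ on the full horizon whose cost at time $t$ approximates $\mathbb{G}_{t,s}^{t,x,u}[V(s,X_{s}^{t,x,u})]$ from above. Since $V(t,x)\leq Y_{t}^{t,x,\tilde u}$ by Theorem \ref{the-dpp-1}, letting the approximation parameter tend to zero and taking the infimum over $u$ gives the claim.

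Fix $u\in\mathbb{U}^{t}[t,s]$ and $\varepsilon>0$. Lemma \ref{le-dpp-4} gives a Lipschitz bound for $V(s,\cdot)$, Lemma \ref{le-neww-dpp-5} gives linear growth, and Theorem \ref{SDE-est} gives a uniform $L^{2}$-bound on $X_{s}^{t,x,u}$. Choose $R>0$ large (depending on $\varepsilon$), partition the ball $B_{R}\subset\mathbb{R}^{n}$ into finitely many disjoint cubes $Q_{1},\dots,Q_{N}$ of diameter at most $\varepsilon$, pick $y_{j}\in Q_{j}$, and on the complement fix any $a\in U$. By Theorem \ref{the-dpp-1} there exist $v^{j}\in\mathbb{U}^{s}[s,T]$ with $Y_{s}^{s,y_{j},v^{j}}\leq V(s,y_{j})+\varepsilon$. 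Define the candidate extension
\[
\tilde u_{r}:=u_{r}\,I_{[t,s)}(r)+\Bigl(\sum_{j=1}^{N}I_{\{X_{s}^{t,x,u}\in Q_{j}\}}v^{j}_{r}+I_{\{X_{s}^{t,x,u}\notin B_{R}\}}\,a\Bigr)I_{[s,T]}(r).
\]
The critical admissibility check is $I_{\{X_{s}^{t,x,u}\in Q_{j}\}}\in L_{G}^{2}(\Omega_{s})$. Since $u\in\mathbb{U}^{t}[t,s]$ is piecewise deterministic on a finite $\mathcal{B}(\Omega_{s})$-partition generated by increments of $B$, I argue cell by cell: on each atom the coefficients $b,h_{ij},\sigma$ reduce to (piecewise-continuous in time) deterministic functions of $(r,x)$ with the non-degeneracy and bounds of the hypothesis preserved, so Theorem \ref{quasi} together with the remark immediately following it yields the required integrability. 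Multiplying by $v^{j}\in M_{G}^{2,s}(s,T)$ and summing over $j$ then places $\tilde u$ in $\mathcal{U}[t,T]$.

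By uniqueness of the $G$-BSDE solution on $[t,s]$ with terminal value $Y_{s}^{t,x,\tilde u}$, the semigroup identity $Y_{t}^{t,x,\tilde u}=\mathbb{G}_{t,s}^{t,x,u}[Y_{s}^{t,x,\tilde u}]$ holds by construction. On $\{X_{s}^{t,x,u}\in Q_{j}\}$ one has $Y_{s}^{t,x,\tilde u}=Y_{s}^{s,X_{s}^{t,x,u},v^{j}}$, so combining Theorem \ref{BSDE-est} (applied to the two initial data $X_{s}^{t,x,u}$ and $y_{j}$ with the same control $v^{j}$) with Theorem \ref{SDE-est}, the choice of $v^{j}$, and the Lipschitz estimate of Lemma \ref{le-dpp-4} gives
\[
Y_{s}^{t,x,\tilde u}\leq V(s,X_{s}^{t,x,u})+C\varepsilon\qquad\text{q.s.\ on }\{X_{s}^{t,x,u}\in B_{R}\},
\]
with $C$ independent of $\varepsilon$; the contribution on $\{X_{s}^{t,x,u}\notin B_{R}\}$ is controlled by Chebyshev together with the linear growth of $V$. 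The comparison principle for $G$-BSDE (a standard input from \cite{HJPS,HJPS1}) and terminal-condition stability then produce
\[
V(t,x)\leq Y_{t}^{t,x,\tilde u}=\mathbb{G}_{t,s}^{t,x,u}[Y_{s}^{t,x,\tilde u}]\leq \mathbb{G}_{t,s}^{t,x,u}[V(s,X_{s}^{t,x,u})]+C'\varepsilon,
\]
and letting $\varepsilon\downarrow 0$ followed by taking the infimum over $u\in\mathbb{U}^{t}[t,s]$ yields the stated inequality.

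The main obstacle is the admissibility of $\tilde u$: the \emph{implied partition} $\{X_{s}^{t,x,u}\in Q_{j}\}$ is random, and it is precisely the quasi-continuity delivered by Theorem \ref{quasi} (whose non-degeneracy and boundedness hypotheses are exactly what this lemma assumes, and whose time-piecewise extension handles the step-process character of $u$) that places the pasting indicators in $L_{G}^{2}$. Once admissibility is secured, the rest is a routine combination of $G$-BSDE stability, comparison, and the regularity of $V$ established in the previous section.
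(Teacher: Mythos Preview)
Your overall pasting strategy mirrors the paper's, and the ``cell by cell'' reduction to deterministic step controls is exactly how the paper handles the structure of $u\in\mathbb{U}^{t}[t,s]$. However there is a genuine gap at the crucial admissibility step. You invoke Theorem~\ref{quasi} to conclude $I_{\{X_{s}^{t,x,u}\in Q_{j}\}}\in L_{G}^{2}(\Omega_{s})$, asserting that ``the non-degeneracy and bounds of the hypothesis'' are preserved on each atom. But the hypothesis of the present lemma imposes only the \emph{upper} bound $|\sigma_{i}|\leq\Lambda$; it does \emph{not} assume any lower bound $\lambda\leq|\sigma_{i}|$, while Theorem~\ref{quasi} explicitly requires $0<\lambda\leq|\sigma_{i}|$. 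In particular $\sigma$ may vanish identically, in which case $X_{s}^{t,x,u}$ can be deterministic and your indicators need not lie in $L_{G}^{2}$. So the direct application of Theorem~\ref{quasi} to $X^{t,x,J_{N}}$ is unjustified.

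The paper resolves this by manufacturing non-degeneracy: it introduces the auxiliary process $\tilde X_{r}^{t,x,u}=X_{r}^{t,x,u}+(\Lambda+1)(B_{r}^{1}-B_{t}^{1})e$, whose diffusion matrix has $i$-th row satisfying $\sqrt{|\sigma_{i1}+\Lambda+1|^{2}+\cdots+|\sigma_{id}|^{2}}\geq1$, so that Theorem~\ref{quasi} applies to each $\tilde X_{s}^{t,x,J_{N}}$ and hence $I_{\{\tilde X_{s}^{t,x,u}\in D\}}\in L_{G}^{2}(\Omega_{s}^{t})$ for cubes $D$. One then partitions simultaneously in $\tilde X_{s}^{t,x,u}$ and in $B_{s}^{1}-B_{t}^{1}$ (the latter also quasi-continuous by Theorem~\ref{quasi}), and recovers an approximation $\xi^{k,u}$ of $X_{s}^{t,x,u}$ as the difference on each cell. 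This ``implied partition'' device is the missing ingredient; once you insert it, your argument goes through essentially as written.
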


\begin{proof}
For each $\varepsilon>0$, there exists a $u(\cdot)\in\mathbb{U}^{t}[t,s]$ such
that%
\[
\mathbb{G}_{t,s}^{t,x,u}[V(s,X_{s}^{t,x,u})]-\varepsilon\leq\underset{v(\cdot
)\in\mathbb{U}^{t}[t,s]}{\inf}\mathbb{G}_{t,s}^{t,x,v}[V(s,X_{s}^{t,x,v})].
\]
We can write $u(\cdot)$ as%
\[
u_{r}=a_{1}^{0}I_{[t_{0},t_{1})}(r)+\sum_{i=1}^{N-1}(\sum_{j=1}^{l_{i}}%
a_{j}^{i}I_{A_{j}^{i}})I_{[t_{i},t_{i+1})}(r),
\]
where $t=t_{0}<t_{1}<\cdots<t_{N}=s$, $l_{i}\in\mathbb{N}$, $a_{j}^{i}\in U$,
$I_{A_{j}^{i}}\in L_{G}^{2}(\Omega_{t_{i}}^{t})$ and $\{A_{j}^{i}%
\}_{j=1}^{l_{i}}$ is a partition of $\Omega$. Consider the following SDE: for
any $v(\cdot)\in\mathbb{U}^{t}[t,s]$,
\[%
\begin{array}
[c]{rl}%
d\tilde{X}_{r}^{t,x,v}= & b(r,\tilde{X}_{r}^{t,x,v}-\tilde{X}_{r}%
e,v_{r})dr+h_{ij}(r,\tilde{X}_{r}^{t,x,v}-\tilde{X}_{r}e,v_{r})d\langle
B^{i},B^{j}\rangle_{r}\\
& +\sigma(r,\tilde{X}_{r}^{t,x,v}-\tilde{X}_{r}e,v_{r})dB_{r}+(\Lambda
+1)edB_{r}^{1},\\
d\tilde{X}_{r}= & (\Lambda+1)dB_{r}^{1},\\
\tilde{X}_{t}^{t,x,v}= & x,\text{ }\tilde{X}_{t}=0,\text{ }r\in\lbrack t,s],
\end{array}
\]
where $e=[1,\ldots,1]^{T}\in\mathbb{R}^{n}$. It is easy to verify that%
\[
\tilde{X}_{r}^{t,x,v}=X_{r}^{t,x,v}+(\Lambda+1)(B_{r}^{1}-B_{t}^{1})e,\text{
}\tilde{X}_{r}=(\Lambda+1)(B_{r}^{1}-B_{t}^{1}),\text{ }r\in\lbrack t,s]
\]
is the solution of the above SDE. We set%
\[
I_{i+1}=\{J_{i+1}=(j_{0},j_{1},\ldots,j_{i}):1\leq j_{k}\leq l_{k},0\leq k\leq
i\},
\]
where $l_{0}=1$, $i\leq N-1$. For each given $J_{i+1}=(j_{0},j_{1}%
,\ldots,j_{i})\in I_{i+1}$, we denote%
\[
\tilde{X}_{r}^{t,x,J_{i+1}}:=\tilde{X}_{r}^{t,x,u^{J_{i+1}}},\text{ }%
r\in\lbrack t,t_{i+1}],
\]
where $u_{r}^{J_{i+1}}=\sum_{k=0}^{i}a_{j_{k}}^{k}I_{[t_{k},t_{k+1})}(r)$ is
deterministic. We claim that%
\begin{equation}
\tilde{X}_{t_{i+1}}^{t,x,u}=\sum_{J_{i+1}\in I_{i+1}}(%
{\displaystyle\prod\limits_{k=1}^{i}}
I_{A_{j_{k}}^{k}})\tilde{X}_{t_{i+1}}^{t,x,J_{i+1}}\text{ for }0\leq i\leq
N-1, \label{eq-dpp-1}%
\end{equation}
where $\Pi_{k=1}^{0}I_{A_{j_{k}}^{k}}=1$. It is easy to check that the
equality (\ref{eq-dpp-1}) holds for $i=0$. Suppose that the equality
(\ref{eq-dpp-1}) holds for $i_{0}\geq0$, then by the similar analysis as in
the proof of Lemma \ref{le-dpp-3}, we can get%
\[
\tilde{X}_{t_{i_{0}+2}}^{t,x,u}=\sum_{j=1}^{l_{i_{0}+1}}I_{A_{j}^{i_{0}+1}%
}\tilde{X}_{t_{i_{0}+2}}^{t_{i_{0}+1},\xi,a_{j}^{i_{0}+1}},\text{ }\tilde
{X}_{t_{i_{0}+2}}^{t_{i_{0}+1},\xi,a_{j}^{i_{0}+1}}=\sum_{J_{i_{0}+1}\in
I_{i_{0}+1}}(%
{\displaystyle\prod\limits_{k=1}^{i_{0}}}
I_{A_{j_{k}}^{k}})\tilde{X}_{t_{i_{0}+2}}^{t_{i_{0}+1},\xi^{J_{i_{0}+1}}%
,a_{j}^{i_{0}+1}},
\]
where $\xi=\tilde{X}_{t_{i_{0}+1}}^{t,x,u}$, $\xi^{J_{i_{0}+1}}=\tilde
{X}_{t_{i_{0}+1}}^{t,x,J_{i_{0}+1}}$. It is easy to verify that $\tilde
{X}_{t_{i_{0}+2}}^{t_{i_{0}+1},\xi^{J_{i_{0}+1}},a_{j}^{i_{0}+1}}=\tilde
{X}_{t_{i_{0}+2}}^{t,x,(J_{i_{0}+1},a_{j}^{i_{0}+1})}$. Thus the equality
(\ref{eq-dpp-1}) holds for $i_{0}+1$. From this we can deduce that%
\[
\tilde{X}_{s}^{t,x,u}=\sum_{J_{N}\in I_{N}}(%
{\displaystyle\prod\limits_{k=1}^{N-1}}
I_{A_{j_{k}}^{k}})\tilde{X}_{s}^{t,x,J_{N}}.
\]
It is easy to check that
\[
\sqrt{|\sigma_{i1}+\Lambda+1|^{2}+|\sigma_{i2}|^{2}+\cdots+|\sigma_{id}|^{2}%
}\geq1,
\]
where $\sigma_{ij}$ is the $i$-th row and $j$-th column of $\sigma$. Then by
Theorem \ref{quasi}, we have $I_{\{ \tilde{X}_{s}^{t,x,J_{N}}\in\lbrack
a,a^{\prime})\}}\in L_{G}^{2}(\Omega_{s}^{t})$ for $a$, $a^{\prime}%
\in\mathbb{R}^{n}$ with $a\leq a^{\prime}$. Thus%
\[
I_{\{ \tilde{X}_{s}^{t,x,u}\in\lbrack a,a^{\prime})\}}=\sum_{J_{N}\in I_{N}}(%
{\displaystyle\prod\limits_{k=1}^{N-1}}
I_{A_{j_{k}}^{k}})I_{\{ \tilde{X}_{s}^{t,x,J_{N}}\in\lbrack a,a^{\prime}%
)\}}\in L_{G}^{2}(\Omega_{s}^{t}).
\]
For each integer $k\geq1$, we can choose finite nonempty cubes $D_{j}%
^{k}\subset\mathbb{R}^{n}$, $E_{j}^{k}\subset\mathbb{R}$ and $x_{j}^{k}\in
D_{j}^{k}$, $q_{j}^{k}\in E_{j}^{k}$ for $j=1,\ldots,l^{k}$, such that
$[-ke,ke)=\cup_{j\leq l^{k}}D_{j}^{k}$, $[-k,k)=\cup_{j\leq l^{k}}E_{j}^{k}$,
$\rho(D_{j}^{k})=\sup\{|x-\bar{x}|:x,\bar{x}\in D_{j}^{k}\} \leq\frac{1}{k}$
and $\rho(E_{j}^{k})=\sup\{|q-\bar{q}|:q,\bar{q}\in E_{j}^{k}\} \leq\frac
{1}{\sqrt{n}(\Lambda+1)k}$. Set%
\[
\xi^{k,u}=\sum_{j_{1}=1}^{l^{k}}\sum_{j_{2}=1}^{l^{k}}(x_{j_{1}}^{k}%
-(\Lambda+1)q_{j_{2}}^{k}e)I_{\{ \tilde{X}_{s}^{t,x,u}\in D_{j_{1}}^{k}%
\}}I_{\{B_{s}^{1}-B_{t}^{1}\in E_{j_{2}}^{k}\}}.
\]
Note that $\tilde{X}_{s}^{t,x,u}=X_{s}^{t,x,u}+(\Lambda+1)(B_{s}^{1}-B_{t}%
^{1})e$, then we get%
\[
|X_{s}^{t,x,u}-\xi^{k,u}|\leq\frac{2}{k}+|X_{s}^{t,x,u}|\frac{|X_{s}%
^{t,x,u}|+\sqrt{n}(\Lambda+1)|B_{s}^{1}-B_{t}^{1}|}{k}.
\]
By Theorem \ref{SDE-est}, there exists a constant $C_{1}>0$ depending on $T$,
$n$, $U$, $G$ and $C$ such that%
\[
\mathbb{\hat{E}}[|X_{s}^{t,x,u}|^{4}]\leq C_{1}(1+|x|^{4}).
\]
Thus we obtain%
\begin{align*}
\mathbb{\hat{E}}[|X_{s}^{t,x,u}-\xi^{k,u}|^{2}]  &  \leq\mathbb{\hat{E}}%
[\frac{8(1+|X_{s}^{t,x,u}|^{4})+n^{2}(\Lambda+1)^{4}|B_{s}^{1}-B_{t}^{1}|^{4}%
}{k^{2}}]\\
&  \leq\frac{C_{2}(1+|x|^{4})}{k^{2}}.
\end{align*}
Set $\tilde{x}_{j_{1}j_{2}}^{k}=x_{j_{1}}^{k}-(\Lambda+1)q_{j_{2}}^{k}e$ for
$j_{1}$, $j_{2}\leq l^{k}$, by Theorem \ref{the-dpp-1}, $V(s,\tilde{x}%
_{j_{1}j_{2}}^{k})=\inf_{u\in\mathbb{U}^{s}[s,T]}Y_{s}^{s,\tilde{x}%
_{j_{1}j_{2}}^{k},u}$ and $V(s,0)=\inf_{u\in\mathbb{U}^{s}[s,T]}Y_{s}^{s,0,u}%
$, then we can choose $\bar{u}^{j_{1}j_{2},k}$, $\bar{u}^{0,k}\in
\mathbb{U}^{s}[s,T]$ such that
\[
V(s,\tilde{x}_{j_{1}j_{2}}^{k})\leq Y_{s}^{s,\tilde{x}_{j_{1}j_{2}}^{k}%
,\bar{u}^{j_{1}j_{2},k}}\leq V(s,\tilde{x}_{j_{1}j_{2}}^{k})+\varepsilon
,\text{ }V(s,0)\leq Y_{s}^{s,0,\bar{u}^{0,k}}\leq V(s,0)+\varepsilon.
\]
Set%
\[
\bar{u}^{k}=\sum_{j_{1}=1}^{l^{k}}\sum_{j_{2}=1}^{l^{k}}I_{\{ \tilde{X}%
_{s}^{t,x,u}\in D_{j_{1}}^{k}\}}I_{\{B_{s}^{1}-B_{t}^{1}\in E_{j_{2}}^{k}%
\}}\bar{u}^{j_{1}j_{2},k}+I_{\{ \tilde{X}_{s}^{t,x,u}\in\lbrack-ke,ke)^{c}\}
\cup\{B_{s}^{1}-B_{t}^{1}\in\lbrack-k,k)^{c}\}}\bar{u}^{0,k},
\]
by the similar analysis as in the proof of Lemma \ref{le-dpp-3}, we can get%
\[
V(s,\xi^{k,u})\leq Y_{s}^{s,\xi^{k,u},\bar{u}^{k}}\leq V(s,\xi^{k,u}%
)+\varepsilon.
\]
By Theorem \ref{BSDE-est}, there exists a constant $C_{3}>0$ depending on $T$,
$G$ and $C$ such that%
\[
\mathbb{\hat{E}}[|Y_{s}^{s,X_{s}^{t,x,u},\bar{u}^{k}}-Y_{s}^{s,\xi^{k,u}%
,\bar{u}^{k}}|^{2}]\leq C_{3}\mathbb{\hat{E}}[|X_{s}^{t,x,u}-\xi^{k,u}|^{2}].
\]
Set $\tilde{u}(r)=u(r)I_{[t,s]}(r)+\bar{u}^{k}(r)I_{(s,T]}(r)$, it is easy to
check that $\tilde{u}\in\mathbb{U}^{t}[t,T]$ and%
\[
V(t,x)\leq Y_{t}^{t,x,\tilde{u}}=\mathbb{G}_{t,s}^{t,x,u}[Y_{s}^{s,X_{s}%
^{t,x,u},\bar{u}^{k}}].
\]
By Theorem \ref{BSDE-est}, we get%
\begin{align*}
&  |\mathbb{G}_{t,s}^{t,x,u}[Y_{s}^{s,X_{s}^{t,x,u},\bar{u}^{k}}%
]-\mathbb{G}_{t,s}^{t,x,u}[V(s,X_{s}^{t,x,u})]|^{2}\\
&  \leq C_{3}\mathbb{\hat{E}}[|Y_{s}^{s,X_{s}^{t,x,u},\bar{u}^{k}}%
-V(s,X_{s}^{t,x,u})|^{2}]\\
&  \leq2C_{3}(\mathbb{\hat{E}}[|Y_{s}^{s,X_{s}^{t,x,u},\bar{u}^{k}}%
-Y_{s}^{s,\xi^{k,u},\bar{u}^{k}}|^{2}]+\mathbb{\hat{E}}[|Y_{s}^{s,\xi
^{k,u},\bar{u}^{k}}-V(s,X_{s}^{t,x,u})|^{2}])\\
&  \leq2C_{3}(C_{3}\mathbb{\hat{E}}[|X_{s}^{t,x,u}-\xi^{k,u}|^{2}%
]+\mathbb{\hat{E}}[(|V(s,\xi^{k,u})-V(s,X_{s}^{t,x,u})|+\varepsilon)^{2}])\\
&  \leq\frac{C_{4}(1+|x|^{4})}{k^{2}}+4C_{3}\varepsilon^{2}.
\end{align*}
Thus%
\[
V(t,x)-\frac{\sqrt{C_{4}(1+|x|^{4})}}{k}-(2\sqrt{C_{3}}+1)\varepsilon
\leq\underset{v(\cdot)\in\mathbb{U}^{t}[t,s]}{\inf}\mathbb{G}_{t,s}%
^{t,x,v}[V(s,X_{s}^{t,x,v})].
\]
Letting $k\rightarrow\infty$ first and then $\varepsilon\downarrow0$, we
obtain $V(t,x)\leq\underset{v(\cdot)\in\mathbb{U}^{t}[t,s]}{\inf}%
\mathbb{G}_{t,s}^{t,x,v}[V(s,X_{s}^{t,x,v})]$. The proof is complete.
\end{proof}

\begin{remark}
In the above proof, $\xi^{k,u}$ is called an \textquotedblleft implied
partition\textquotedblright\ of $X_{s}^{t,x,u}$.
\end{remark}

\begin{lemma}
\label{le-new-dpp2}Let Assumptions (A1) and (A2) hold. Then for any $t<s\leq
T$, $x\in\mathbb{R}^{n}$, we have%
\[
V(t,x)\leq\underset{u(\cdot)\in\mathbb{U}^{t}[t,s]}{\inf}\mathbb{G}%
_{t,s}^{t,x,u}[V(s,X_{s}^{t,x,u})].
\]

\end{lemma}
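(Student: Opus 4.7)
The plan is to reduce Lemma~\ref{le-new-dpp2} to Lemma~\ref{le-new-dpp1} by a truncation argument on the coefficients $b,h_{ij},\sigma$, and then pass to the limit. Lemma~\ref{le-new-dpp1} already handles everything except the boundedness of the SDE coefficients, and (A1) together with the compactness of $U$ already makes the coefficients bounded on bounded $x$-sets. So the natural idea is to truncate in $x$ at level $N$ and let $N\to\infty$.

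First I would introduce a smooth cutoff $\chi_N:\mathbb{R}^n\to[0,1]$ with $\chi_N\equiv 1$ on $\{|y|\leq N\}$, $\chi_N\equiv 0$ on $\{|y|\geq 2N\}$ and $|\nabla\chi_N|\leq 2/N$, and set $b^N:=\chi_N b$, $h_{ij}^N:=\chi_N h_{ij}$, $\sigma^N:=\chi_N\sigma$. Condition (A1) and the bound $|b(s,y,u)|\leq C(1+|y|)+|b(s,0,u)|$ imply that $b^N,h_{ij}^N,\sigma^N$ are uniformly bounded in $(s,y,u)$ with bounds $L_N,\Lambda_N$ depending on $N$, and that their Lipschitz constants in $(x,u)$ remain controlled \emph{uniformly in $N$} (on the transition annulus, $|\nabla\chi_N|\cdot|b|\leq (2/N)\cdot C(1+2N)=O(1)$). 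Denoting by $X^{N,t,x,u},Y^{N,t,x,u},V^N,\mathbb{G}^{N,t,x,u}$ the state, cost, value function, and backward semigroup associated with $(b^N,h_{ij}^N,\sigma^N,f,g_{ij},\Phi)$, Lemma~\ref{le-new-dpp1} applies to the truncated system and yields
\[
V^N(t,x)\leq\inf_{u\in\mathbb{U}^t[t,s]}\mathbb{G}^{N,t,x,u}_{t,s}\bigl[V^N(s,X^{N,t,x,u}_s)\bigr]\qquad\text{for all }N\geq 1.
\]

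Next I would let $N\to\infty$. By Theorem~\ref{SDE-est}, $\mathbb{\hat{E}}[\sup_{r\in[t,T]}|X^{t,x,u}_r|^p]\leq\bar C(1+|x|^p)$ uniformly in $u$, so Markov's inequality yields $c\bigl(\sup_r|X^{t,x,u}_r|>N\bigr)\leq\bar C(1+|x|^p)N^{-p}$ uniformly in $u$. Since $b^N,h_{ij}^N,\sigma^N$ coincide with $b,h_{ij},\sigma$ on $\{|y|\leq N\}$, a standard localization and Gronwall argument on the SDE difference (with constants depending only on $C,G,T,n,U$ by the uniform-in-$N$ Lipschitz bound above) gives
\[
\sup_{u\in\mathcal{U}[t,T]}\mathbb{\hat{E}}\Bigl[\sup_{r\in[t,T]}|X^{N,t,x,u}_r-X^{t,x,u}_r|^2\Bigr]\longrightarrow 0.
\]
Combining this with Theorem~\ref{BSDE-est}, the Lipschitz continuity of $V$ in $x$ (Lemma~\ref{le-dpp-4}), and the linear growth (Lemma~\ref{le-neww-dpp-5}), one obtains $V^N(t,x)\to V(t,x)$ and
\[
\sup_{u\in\mathbb{U}^t[t,s]}\Bigl|\mathbb{G}^{N,t,x,u}_{t,s}\bigl[V^N(s,X^{N,t,x,u}_s)\bigr]-\mathbb{G}^{t,x,u}_{t,s}\bigl[V(s,X^{t,x,u}_s)\bigr]\Bigr|\longrightarrow 0,
\]
so one may take $\limsup_N$ on the right-hand side of the truncated inequality and interchange the limit with the infimum, obtaining the claimed bound.

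The main obstacle is precisely this \emph{uniformity in $u$} of the convergence, which is what permits $\lim_N$ to pass through $\inf_u$. It rests on the fact that all constants in Theorems~\ref{SDE-est} and~\ref{BSDE-est} and in Lemmas~\ref{le-dpp-4}, \ref{le-neww-dpp-5} depend only on $C,G,T,n,U,p$ and the initial value $x$, never on the particular control; and on the truncated coefficients $b^N,h_{ij}^N,\sigma^N$ retaining Lipschitz constants that stay bounded uniformly in $N$. A secondary technical point is that the Lipschitz constant of $V^N$ in $x$, obtained by applying Lemma~\ref{le-dpp-4} to the truncated problem, is itself bounded independently of $N$, so that the estimate $|V^N(s,X^{N,t,x,u}_s)-V(s,X^{t,x,u}_s)|\leq L_2|X^{N,t,x,u}_s-X^{t,x,u}_s|+|V^N(s,X^{t,x,u}_s)-V(s,X^{t,x,u}_s)|$ delivers vanishing bounds uniformly in $u$.
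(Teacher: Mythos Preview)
Your proposal follows essentially the same strategy as the paper's proof: truncate the SDE coefficients so that Lemma~\ref{le-new-dpp1} applies, obtain the inequality for the truncated system, and then pass to the limit using the SDE and BSDE stability estimates together with the uniform (in $u$) Lipschitz bound on $V^N$. The only substantive difference is the choice of truncation. The paper truncates the \emph{values} of the coefficients componentwise, setting $b^{i_1,N}=(b^{i_1}\wedge N)\vee(-N)$ and similarly for $h_{ij}^{N},\sigma^{N}$; this is automatically $1$-Lipschitz as a post-composition, so Assumption~(A1) is inherited with the same constant and no further checking is needed. Your spatial cutoff $b^N=\chi_N b$ also works, but it costs you the extra verification that the Lipschitz constants remain bounded in $N$ (your $O(1)$ calculation on the annulus is correct, but note that it must be carried through for the mixed case $|x|\le 2N<|x'|$ via an intermediate point on the sphere $\{|y|=2N\}$).

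One point worth tightening: in place of ``a standard localization and Gronwall argument'' you should invoke the SDE perturbation estimate directly, as the paper does, namely
\[
\mathbb{\hat{E}}\bigl[|X_r^{N,t,x,u}-X_r^{t,x,u}|^2\bigr]\le C_1\,\mathbb{\hat{E}}\Bigl[\int_t^T\bigl(|b-b^N|^2+|h_{ij}-h_{ij}^N|^2+|\sigma-\sigma^N|^2\bigr)(s,X_s^{t,x,u},u_s)\,ds\Bigr],
\]
and then bound the integrand pointwise by $C(1+|X_s^{t,x,u}|^{4})N^{-2}$ using $(1-\chi_N)(y)\le I_{\{|y|\ge N\}}\le |y|/N$ (for the paper's truncation the analogous bound is $|b-b^N|\le |b|^2/N$). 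This gives the explicit rate $C(1+|x|^4)N^{-2}$ uniformly in $u$, which is exactly what is needed to interchange $\lim_N$ and $\inf_u$. Stopping-time localization under a family of nondominated measures is more delicate than in the classical setting and is unnecessary here.
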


\begin{proof}
For each fixed $N>0$, we set $b^{i_{1},N}=(b^{i_{1}}\wedge N)\vee(-N)$,
$h_{ij}^{i_{1},N}=(h_{ij}^{i_{1}}\wedge N)\vee(-N)$, $\sigma_{i_{1}i_{2}}%
^{N}=(\sigma_{i_{1}i_{2}}\wedge N)\vee(-N)$ for $i_{1}\leq n$, $i_{2}\leq d$
and $b^{N}=(b^{1,N},\ldots,b^{n,N})^{T}$, $h_{ij}^{N}=(h_{ij}^{1,N}%
,\ldots,h_{ij}^{n,N})^{T}$, $\sigma^{N}=(\sigma_{i_{1}i_{2}}^{N})_{i_{1}i_{2}%
}$. Consider the following FBSDEs:%
\begin{align*}
dX_{s}^{t,x,u,N}  &  =b^{N}(s,X_{s}^{t,x,u,N},u_{s})ds+h_{ij}^{N}%
(s,X_{s}^{t,x,u,N},u_{s})d\langle B^{i},B^{j}\rangle_{s}+\sigma^{N}%
(s,X_{s}^{t,x,u,N},u_{s})dB_{s},\\
X_{t}^{t,x,u,N}  &  =x,
\end{align*}
and%
\begin{align*}
-dY_{s}^{t,x,u,N}  &  =f(s,X_{s}^{t,x,u,N},Y_{s}^{t,x,u,N},Z_{s}%
^{t,x,u,N},u_{s})ds\\
&  +g_{ij}(s,X_{s}^{t,x,u,N},Y_{s}^{t,x,u,N},Z_{s}^{t,x,u,N},u_{s})d\langle
B^{i},B^{j}\rangle_{s}-Z_{s}^{t,x,u,N}dB_{s}-dK_{s}^{t,x,u,N},\\
Y_{T}^{t,x,u,N}  &  =\Phi(X_{T}^{t,x,u,N}),\text{ \ \ \ }s\in\lbrack
t,T]\text{.}%
\end{align*}
We define%
\[
V^{N}(t,x)=\underset{u(\cdot)\in\mathbb{U}^{t}[t,T]}{\inf}Y_{t}^{t,x,u,N}.
\]
By Lemma \ref{le-new-dpp1}, we get for any $t<s\leq T$, $x\in\mathbb{R}^{n}$,%
\begin{equation}
V^{N}(t,x)\leq\underset{u(\cdot)\in\mathbb{U}^{t}[t,s]}{\inf}\mathbb{G}%
_{t,s}^{t,x,u,N}[V^{N}(s,X_{s}^{t,x,u,N})], \label{eq-dpp-2}%
\end{equation}
where $\mathbb{G}_{t,s}^{t,x,u,N}[\cdot]$ is defined as in $\mathbb{G}%
_{t,s}^{t,x,u}[\cdot]$. By Theorem \ref{SDE-est}, there exists a constant
$C_{1}>0$ depending on $T$, $n$, $U$, $G$ and $C$ such that for any
$u(\cdot)\in\mathbb{U}^{t}[t,T]$, $r\in\lbrack t,T]$,
\[
\mathbb{\hat{E}}[|X_{r}^{t,x,u}|^{4}]\leq C_{1}(1+|x|^{4}),
\]%
\begin{align*}
\mathbb{\hat{E}}[|X_{r}^{t,x,u,N}-X_{r}^{t,x,u}|^{2}]  &  \leq C_{1}%
\mathbb{\hat{E}}[\int_{t}^{T}(|b-b^{N}|^{2}+|h_{ij}-h_{ij}^{N}|^{2}%
+|\sigma-\sigma^{N}|^{2})(s,X_{s}^{t,x,u},u)ds]\\
&  \leq C_{1}\mathbb{\hat{E}}[\int_{t}^{T}\frac{1}{N^{2}}(|b|^{4}+|h_{ij}%
|^{4}+|\sigma|^{4})(s,X_{s}^{t,x,u},u)ds]\\
&  \leq\frac{C_{2}(1+|x|^{4})}{N^{2}},
\end{align*}
where $C_{2}$ depending on $T$, $n$, $U$, $G$ and $C$. By Theorem
\ref{BSDE-est}, there exists a constant $C_{3}>0$ depending on $T$, $G$ and
$C$ such that for any $u(\cdot)\in\mathbb{U}^{t}[t,T]$,%
\begin{align*}
|Y_{t}^{t,x,u,N}-Y_{t}^{t,x,u}|^{2}  &  \leq C_{3}\mathbb{\hat{E}}[|\Phi
(X_{T}^{t,x,u,N})-\Phi(X_{T}^{t,x,u})|^{2}+\int_{t}^{T}|X_{r}^{t,x,u,N}%
-X_{r}^{t,x,u}|^{2}dr]\\
&  \leq\frac{C_{4}(1+|x|^{4})}{N^{2}},
\end{align*}
where $C_{4}$ depending on $T$, $n$, $U$, $G$ and $C$. Thus we get%
\[
|V^{N}(t,x)-V(t,x)|\leq\underset{u(\cdot)\in\mathbb{U}^{t}[t,T]}{\sup}%
|Y_{t}^{t,x,u,N}-Y_{t}^{t,x,u}|\leq\frac{\sqrt{C_{4}(1+|x|^{4})}}{N}.
\]
It is easy to verify that Lemma \ref{le-dpp-4} still holds for $V^{N}$. Then
we can get%
\begin{align*}
&  |V^{N}(s,X_{s}^{t,x,u,N})-V(s,X_{s}^{t,x,u})|^{2}\\
&  \leq2(|V^{N}(s,X_{s}^{t,x,u,N})-V^{N}(s,X_{s}^{t,x,u})|^{2}+|V^{N}%
(s,X_{s}^{t,x,u})-V(s,X_{s}^{t,x,u})|^{2})\\
&  \leq2(L_{2}^{2}|X_{s}^{t,x,u,N}-X_{s}^{t,x,u}|^{2}+\frac{C_{4}%
(1+|X_{s}^{t,x,u}|^{4})}{N^{2}}).
\end{align*}
Similar to the proof of Lemma \ref{le-new-dpp1}, we can obtain for any
$u(\cdot)\in\mathbb{U}^{t}[t,s]$,
\begin{align*}
&  |\mathbb{G}_{t,s}^{t,x,u,N}[V^{N}(s,X_{s}^{t,x,u,N})]-\mathbb{G}%
_{t,s}^{t,x,u}[V(s,X_{s}^{t,x,u})]|^{2}\\
&  \leq C_{3}\mathbb{\hat{E}}[|V^{N}(s,X_{s}^{t,x,u,N})-V(s,X_{s}%
^{t,x,u})|^{2}+\int_{t}^{s}|X_{r}^{t,x,u,N}-X_{r}^{t,x,u}|^{2}dr].
\end{align*}
Thus%
\[
|\underset{u(\cdot)\in\mathbb{U}^{t}[t,s]}{\inf}\mathbb{G}_{t,s}%
^{t,x,u,N}[V^{N}(s,X_{s}^{t,x,u,N})]-\underset{u(\cdot)\in\mathbb{U}%
^{t}[t,s]}{\inf}\mathbb{G}_{t,s}^{t,x,u}[V(s,X_{s}^{t,x,u})]|\leq\frac
{\sqrt{C_{5}(1+|x|^{4})}}{N},
\]
where $C_{5}$ depending on $T$, $n$, $U$, $G$ and $C$. Taking $N\rightarrow
\infty$ in inequality (\ref{eq-dpp-2}), we obtain the result. The proof is complete.
\end{proof}

Now we give the proof of Theorem \ref{the-dpp-2}:

\begin{proof}
(1) By Lemma \ref{le-dpp-4}, we have for any $u(\cdot)\in\mathcal{U}[t,T]$,
\[
Y_{s}^{s,X_{s}^{t,x,u},u}\geq V(s,X_{s}^{t,x,u})\; \text{q.s.,}%
\]
where $Y_{s}^{s,X_{s}^{t,x,u},u}=Y_{s}^{t,x,u}$ is the solution of equation
(\ref{state equ-2}) at time $s$. Then, by the comparison theorem of $G$-BSDE,
we obtain%
\[
Y_{t}^{t,x,u}\geq\mathbb{G}_{t,s}^{t,x,u}[V(s,X_{s}^{t,x,u})]\; \text{q.s.},
\]
which leads to%
\[
V(t,x)\geq\underset{u(\cdot)\in\mathcal{U}[t,s]}{\text{ess}\inf}%
\mathbb{G}_{t,s}^{t,x,u}[V(s,X_{s}^{t,x,u})].
\]

(2) Now we prove the converse inequality.

By Theorem \ref{the-dpp-1}, we get%
\begin{align*}
\underset{u(\cdot)\in\mathcal{U}[t,s]}{\text{ess}\inf}\mathbb{G}_{t,s}%
^{t,x,u}[V(s,X_{s}^{t,x,u})]  &  =\underset{u(\cdot)\in\mathcal{U}%
^{t}[t,s]}{\inf}\mathbb{G}_{t,s}^{t,x,u}[V(s,X_{s}^{t,x,u})]\\
&  =\underset{u(\cdot)\in\mathbb{U}^{t}[t,s]}{\inf}\mathbb{G}_{t,s}%
^{t,x,u}[V(s,X_{s}^{t,x,u})].
\end{align*}
By Lemma \ref{le-new-dpp2}, we obtain%
\[
V(t,x)\leq\underset{u(\cdot)\in\mathbb{U}^{t}[t,s]}{\inf}\mathbb{G}%
_{t,s}^{t,x,u}[V(s,X_{s}^{t,x,u})].
\]
This completes the proof.
\end{proof}

The following lemma shows the continuity of $V$ in $t.$

\begin{lemma}
\label{Lem-dpp ine 2}The value function $V$ is $\frac{1}{2}$ H\"{o}lder
continuous in $t$.
\end{lemma}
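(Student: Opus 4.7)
The plan is to use the dynamic programming principle just established in order to reduce the time-regularity of $V$ to a short-interval a priori estimate for the $G$-FBSDE system. Fix $0\le t<s\le T$ and $x\in\mathbb{R}^{n}$. Theorem \ref{the-dpp-2} with terminal time $s$ reads
$$V(t,x)=\inf_{u\in\mathcal{U}^{t}[t,s]}\mathbb{G}_{t,s}^{t,x,u}[V(s,X_{s}^{t,x,u})],$$
so it suffices to bound $|\mathbb{G}_{t,s}^{t,x,u}[V(s,X_{s}^{t,x,u})]-V(s,x)|$ uniformly in $u\in\mathcal{U}^{t}[t,s]$; taking the infimum in one direction and an $\varepsilon$-minimizer in the other then yields $|V(t,x)-V(s,x)|\le C(1+|x|)(s-t)^{1/2}$.

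For each admissible $u$ I would decompose
$$\mathbb{G}_{t,s}^{t,x,u}[V(s,X_{s}^{t,x,u})]-V(s,x)=I_{1}(u)+I_{2}(u),$$
where $I_{1}(u)=\mathbb{G}_{t,s}^{t,x,u}[V(s,X_{s}^{t,x,u})]-\mathbb{G}_{t,s}^{t,x,u}[V(s,x)]$ and $I_{2}(u)=\mathbb{G}_{t,s}^{t,x,u}[V(s,x)]-V(s,x)$. The term $I_{1}(u)$ compares two $G$-BSDEs on $[t,s]$ with the same driver but different (and in the second case deterministic) terminal data; Theorem \ref{BSDE-est} bounds $|I_{1}(u)|^{2}$ by a constant times $\hat{\mathbb{E}}[|V(s,X_{s}^{t,x,u})-V(s,x)|^{2}]$, and then Lemma \ref{le-dpp-4} followed by Theorem \ref{SDE-est} gives $|I_{1}(u)|^{2}\le C(1+|x|^{2})(s-t)$.

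The term $I_{2}(u)$ is handled by comparing the solution of (\ref{state equ-2}) with deterministic terminal $V(s,x)$ against the trivial process identically equal to $V(s,x)$. Theorem \ref{BSDE-est} then controls $|I_{2}(u)|^{2}$ by the $\hat{\mathbb{E}}$-norm of the driver evaluated at $(X_{r}^{t,x,u},V(s,x),0,u_{r})$ on $[t,s]$. The Lipschitz bounds on $f$ and $g_{ij}$ from (A1), continuity in the time variable from (A2) together with the compactness of $U$, the moment estimate of Theorem \ref{SDE-est}, and the linear growth of $V$ from Lemma \ref{le-neww-dpp-5} combine to give $|I_{2}(u)|^{2}\le C(1+|x|^{2})(s-t)$. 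Putting the two pieces together and appealing to the DPP yields the claimed $\frac{1}{2}$-Hölder estimate.

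The main subtlety I expect is that the decreasing $G$-martingale $\tilde{K}^{t,x,u}$ appearing in (\ref{state equ-2}) prevents one from identifying $\mathbb{G}_{t,s}^{t,x,u}[V(s,x)]$ with the classical conditional expectation of the driver integrals, which is exactly what would give the short-time bound in the Wiener setting. This is precisely the difficulty that Theorem \ref{BSDE-est} is designed to absorb, so once that estimate is invoked the remaining computation reduces to moment bounds that are by now routine in the $G$-framework.
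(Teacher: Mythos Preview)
Your approach is correct and follows the same strategy as the paper's proof: use the dynamic programming principle to reduce the question to a short-interval estimate, then compare $\mathbb{G}_{t,s}^{t,x,u}[V(s,X_{s}^{t,x,u})]$ with the constant $V(s,x)$ via the $G$-BSDE stability estimate and the forward moment bounds. The only organisational difference is that the paper does the comparison in a single step---observing that $(V(s,x),0,0)$ solves the $G$-BSDE on $[t,s]$ with zero generator and terminal value $V(s,x)$, and invoking Proposition~5.1 of \cite{HJPS1} directly---whereas you insert the intermediate quantity $\mathbb{G}_{t,s}^{t,x,u}[V(s,x)]$ and treat $I_{1}$ and $I_{2}$ separately. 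Both routes give the same $(1+|x|^{2})(s-t)$ bound (your $I_{2}$ in fact contributes order $(s-t)^{2}$, so $I_{1}$ dominates). One small point of precision: Theorem~\ref{BSDE-est} as stated compares the full FBSDE system with terminal $\Phi(X_{T}^{\cdot})$, so for the comparison of two $G$-BSDEs with \emph{general} terminal data you should, as the paper does, cite the underlying stability estimate (Proposition~5.1 in \cite{HJPS1}) rather than Theorem~\ref{BSDE-est} itself.
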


\textbf{Proof. }Set $(t,x)\in\mathbb{R}^{n}\times\lbrack0,T]$ and $\delta>0.$
By dynamic programming principle, we have%
\[
V(t,x)=\underset{u(\cdot)\in\mathcal{U}^{t}[t,t+\delta]}{\inf}\mathbb{G}%
_{t,t+\delta}^{t,x,u}[V(t+\delta,X_{t+\delta}^{t,x,u})].
\]
Set $\bar{f}=\bar{g}_{ij}=0$, it is easy to verify that $(V(t+\delta
,x),0,0)_{s\in\lbrack t,t+\delta]}$ is the solution of $G$-BSDE
(\ref{state equ-2}) with terminal condition $\bar{Y}_{t+\delta}=V(t+\delta
,x)$. Thus by Proposition 5.1 in \cite{HJPS1}, there exists a constant
$C_{1}>0$ depending on $T$, $G$ and $C$ such that for any $u(\cdot
)\in\mathcal{U}^{t}[t,t+\delta]$,%
\begin{align*}
&  |\mathbb{G}_{t,t+\delta}^{t,x,u}[V(t+\delta,X_{t+\delta}^{t,x,u}%
)]-V(t+\delta,x)|^{2}\\
&  \leq C_{1}\mathbb{\hat{E}}[|V(t+\delta,X_{t+\delta}^{t,x,u})-V(t+\delta
,x)|^{2}+(\int_{t}^{t+\delta}(1+|X_{s}^{t,x,u}|+|V(t+\delta,x)|)ds)^{2}].
\end{align*}
By Lemmas \ref{le-dpp-4} and \ref{le-neww-dpp-5}, we can get%
\begin{align*}
&  |\mathbb{G}_{t,t+\delta}^{t,x,u}[V(t+\delta,X_{t+\delta}^{t,x,u}%
)]-V(t+\delta,x)|^{2}\\
&  \leq C_{2}\mathbb{\hat{E}}[(1+|x|^{2})\delta^{2}+\delta\int_{t}^{t+\delta
}|X_{s}^{t,x,u}|^{2}ds+|X_{t+\delta}^{t,x,u}-x|^{2}],
\end{align*}
where $C_{2}$ depending on $T$, $G$ and $C$. By Theorem \ref{SDE-est}, there
exists a constant $C_{3}>0$ depending on $T$, $n$, $U$, $G$ and $C$ such that
for any $u(\cdot)\in\mathcal{U}^{t}[t,t+\delta]$,%
\[
\mathbb{\hat{E}}[|X_{s}^{t,x,u}|^{2}]\leq C_{3}(1+|x|^{2}),\text{
}\mathbb{\hat{E}}[|X_{t+\delta}^{t,x,u}-x|^{2}]\leq C_{3}(1+|x|^{2})\delta.
\]
Then we obtain%
\[
|\mathbb{G}_{t,t+\delta}^{t,x,u}[V(t+\delta,X_{t+\delta}^{t,x,u}%
)]-V(t+\delta,x)|^{2}\leq C_{4}(1+|x|^{2})\delta,
\]
where $C_{4}$ depending on $T$, $n$, $U$, $G$ and $C$. Thus%
\begin{align*}
|V(t,x)-V(t+\delta,x)|  &  \leq\underset{u(\cdot)\in\mathcal{U}^{t}%
[t,t+\delta]}{\sup}|\mathbb{G}_{t,t+\delta}^{t,x,u}[V(t+\delta,X_{t+\delta
}^{t,x,u})]-V(t+\delta,x)|\\
&  \leq\sqrt{C_{4}(1+|x|^{2})}\delta^{\frac{1}{2}}.
\end{align*}
The proof is complete.

\section{The viscosity solution of HJB equation}

The following theorem gives the relationship between the value function $V$
and the second-order partial differential equation (\ref{hjb}).

\begin{theorem}
\label{viscosity} Let Assumptions (A1) and (A2) hold. $V$ is the value
function defined by (\ref{valuefunction0}). Then $V$ is the unique viscosity
solution of the following second-order partial differential equation:
\begin{align}
&  \partial_{t}V(t,x)+\underset{u\in U}{\inf}H(t,x,V,\partial_{x}%
V,\partial_{xx}^{2}V,u)=0,\label{hjb}\\
&  V(T,x)=\Phi(x),\quad\ \ x\in\mathbb{R}^{n},\nonumber
\end{align}
where%
\[%
\begin{array}
[c]{cl}%
H(t,x,v,p,A,u)= & G(F(t,x,v,p,A,u))+\langle p,b(t,x,u)\rangle+f(t,x,v,\sigma
^{T}(t,x,u)p,u),\\
F_{ij}(t,x,v,p,A,u)= & (\sigma^{T}(t,x,u)A\sigma(t,x,u))_{ij}+2\langle
p,h_{ij}(t,x,u)\rangle\\
& +2g_{ij}(t,x,v,\sigma^{T}(t,x,u)p,u),
\end{array}
\]
$(t,x,v,p,A,u)\in\lbrack0,T]\times\mathbb{R}^{n}\times\mathbb{R}%
\times\mathbb{R}^{n}\times\mathbb{S}_{n}\times U$, $G$ is defined by equation
(\ref{eq-G}).
\end{theorem}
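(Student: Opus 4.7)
The plan is to show that $V$ is both a viscosity subsolution and a viscosity supersolution of the HJB equation, and then invoke a comparison principle for uniqueness. The backbone is the dynamic programming principle (Theorem \ref{the-dpp-2}), which localizes the analysis to a small interval $[t_0,t_0+\delta]$ so one can compare the $G$-backward semigroup $\mathbb{G}^{t_0,x_0,u}_{t_0,t_0+\delta}$ applied to a $C^{1,2}$ test function against the Itô-type expansion of that test function along $X^{t_0,x_0,u}$.

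For the subsolution inequality, I would fix $\varphi\in C^{1,2}$ with $\varphi\ge V$ near $(t_0,x_0)$ and $\varphi(t_0,x_0)=V(t_0,x_0)$, and test against an arbitrary constant control $u\equiv u_0\in U$. By the DPP combined with the comparison theorem for $G$-BSDEs, $\varphi(t_0,x_0)=V(t_0,x_0)\le \mathbb{G}^{t_0,x_0,u_0}_{t_0,t_0+\delta}[\varphi(t_0+\delta,X^{t_0,x_0,u_0}_{t_0+\delta})]$. Applying the $G$-Itô formula to $\varphi(s,X^{t_0,x_0,u_0}_s)$ produces a $G$-semimartingale representation in which the $d\langle B^i,B^j\rangle_s$ terms arrange themselves exactly into the quantity $F_{ij}$ from the statement. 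Subtracting the two $G$-BSDEs (the one for $\mathbb{G}^{t_0,x_0,u_0}[\varphi]$ and the one coming from Itô), dividing by $\delta$, and sending $\delta\downarrow 0$ converts this into $\partial_t\varphi(t_0,x_0)+H(t_0,x_0,\varphi,\partial_x\varphi,\partial^2_{xx}\varphi,u_0)\ge 0$; taking the infimum over $u_0\in U$ yields the subsolution property.

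For the supersolution inequality I would take $\varphi\le V$ with equality at $(t_0,x_0)$. The DPP now gives $V(t_0,x_0)=\inf_{u\in \mathcal{U}^{t_0}[t_0,t_0+\delta]}\mathbb{G}^{t_0,x_0,u}_{t_0,t_0+\delta}[V(t_0+\delta,X^{t_0,x_0,u}_{t_0+\delta})]$, so for each $\varepsilon>0$ one selects a near-optimal $u^{\varepsilon}\in\mathcal{U}^{t_0}[t_0,t_0+\delta]$ along which the reverse inequality $\varphi(t_0,x_0)+\varepsilon\delta\ge \mathbb{G}^{t_0,x_0,u^\varepsilon}_{t_0,t_0+\delta}[\varphi(t_0+\delta,X^{t_0,x_0,u^\varepsilon}_{t_0+\delta})]$ holds. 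Repeating the Itô comparison and carefully sending $\delta\downarrow 0$ (after averaging $u^\varepsilon$ in a right-continuity sense at $t_0$) produces the matching $\partial_t\varphi(t_0,x_0)+\inf_{u}H(t_0,x_0,\varphi,\partial_x\varphi,\partial^2_{xx}\varphi,u)\le 0$.

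The main obstacle, flagged in the introduction, is the appearance of \emph{two} decreasing $G$-martingale increments at the infinitesimal level: the $K^{t_0,x_0,u}$ from the defining $G$-BSDE, and the implicit decreasing $G$-martingale produced when the second-order term $\tfrac12\mathrm{tr}(\sigma\sigma^T\partial^2_{xx}\varphi)\,ds$ is rewritten against $d\langle B\rangle_s$ through the $G$-nonlinearity. A naive $\mathbb{\hat{E}}$-argument would lose sign control here. The remedy, as announced in the text, is to use the decreasing $G$-martingale lemma (Lemma \ref{le-hjb-4}) to convert these terms into one-sided estimates that survive the $\mathbb{\hat{E}}$ step, and then to apply the auxiliary Lemma \ref{Lem-auxilary-3} to close the comparison. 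For uniqueness, the Hamiltonian $H$ is continuous and structurally of Bellman--Isaacs type, so together with the regularity of $V$ established earlier (Lipschitz in $x$ by Lemma \ref{le-dpp-4}, linear growth by Lemma \ref{le-neww-dpp-5}, and $\tfrac12$-H\"older in $t$ by Lemma \ref{Lem-dpp ine 2}), $V$ lies in a uniqueness class for which the standard doubling-of-variables comparison principle applies.
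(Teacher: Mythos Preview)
Your overall architecture matches the paper: reduce via DPP to the backward semigroup on $[t,t+\delta]$, apply $G$-It\^o to the test function to produce the auxiliary $G$-BSDE $Y^{1,u}$ (Lemma~\ref{le-hjb-1}), freeze $x$ to get $Y^{2,u}$ with error $O(\delta^{3/2})$ (Lemma~\ref{le-hjb-2}), and finish by comparing $\inf_u Y^{2,u}_t$ with the ODE solution $Y^{0}_t$ (Lemma~\ref{Lem-auxilary-3}). Your subsolution argument via a constant control $u_0$ is correct and is essentially part~(2) of the proof of Lemma~\ref{Lem-auxilary-3}: for deterministic $u_0$ the unique $G$-BSDE solution $Y^{2,u_0}$ is the ODE with driver $F_1(\cdot,u_0)+2G(F_2(\cdot,u_0))$, so dividing by $\delta$ and letting $\delta\downarrow 0$ gives $\partial_t\varphi + H(\cdot,u_0)\ge 0$ for every $u_0$.

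The gap is in your supersolution step. After selecting a near-optimal $u^{\varepsilon}\in\mathcal{U}^{t}[t,t+\delta]$ you have $Y^{2,u^{\varepsilon}}_t\le \varepsilon\delta + L_4\delta^{3/2}$, but $u^{\varepsilon}$ is genuinely stochastic, so $Y^{2,u^{\varepsilon}}$ is \emph{not} an ODE and your phrase ``averaging $u^{\varepsilon}$ in a right-continuity sense at $t_0$'' has no meaning here: there is no reference measure under which to average, and $u^{\varepsilon}$ depends on $\delta$. What the paper actually does---and what you cite at the end but do not wire into the argument---is to prove the \emph{uniform} lower bound $Y^{2,u}_t\ge Y^{0}_t$ for every $u\in\mathcal{U}^{t}[t,t+\delta]$ (part~(1) of Lemma~\ref{Lem-auxilary-3}). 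This is where the two decreasing $G$-martingales enter: rewriting $\int F_2^{ij}d\langle B^i,B^j\rangle = \int 2G(F_2)\,dr + M$ produces a decreasing $G$-martingale $M$ in addition to $K^{2,u}$. Lemma~\ref{le-hjb-4} furnishes a particular $Q\in\mathcal{P}$ under which $M\equiv 0$; then, working \emph{under $Q$} (not under $\mathbb{\hat{E}}$), $-K^{2,u}$ is increasing and the driver dominates $F_0$, so classical BSDE comparison gives $Y^{2,u}_t\ge Y^{0}_t$. Combining this with your upper bound yields $Y^{0}_t\le \varepsilon\delta + L_4\delta^{3/2}$, hence $F_0(t,x,0,0)\le 0$. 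Replace the ``averaging'' sentence with this $Q$-comparison and your proof is complete.
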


For simplicity, we only consider the case $h_{ij}=g_{ij}=0$.

Suppose $\varphi\in C_{b,Lip}^{2,3}([0,T]\times\mathbb{R}^{n})$. Define%
\begin{equation}%
\begin{array}
[c]{rl}%
F_{1}(s,x,y,z,u)= & \partial_{s}\varphi(s,x)+f(s,x,y+\varphi(s,x),z+\sigma
^{T}(s,x,u)\partial_{x}\varphi(s,x),u)\\
& +\langle b(s,x,u),\partial_{x}\varphi(s,x)\rangle,\\
F_{2}^{ij}(s,x,u)= & \frac{1}{2}\langle\partial_{xx}^{2}\varphi(s,x)\sigma
_{i}(s,x,u),\sigma_{j}(s,x,u)\rangle.
\end{array}
\label{EQS-1}%
\end{equation}

Consider the following G-BSDEs: $\forall s\in\lbrack t,t+\delta],$%
\begin{equation}%
\begin{array}
[c]{rl}%
Y_{s}^{1,u}= & \int_{s}^{t+\delta}F_{1}(r,X_{r}^{t,x,u},Y_{r}^{1,u}%
,Z_{r}^{1,u},u_{r})dr+\int_{s}^{t+\delta}F_{2}^{ij}(r,X_{r}^{t,x,u}%
,u_{r})d\langle B^{i},B^{j}\rangle_{r}\\
& -\int_{s}^{t+\delta}Z_{r}^{1,u}dB_{r}-(K_{t+\delta}^{1,u}-K_{s}^{1,u}),
\end{array}
\label{auxilary-1}%
\end{equation}
and%
\begin{equation}
Y_{s}^{u}=\varphi(t+\delta,X_{t+\delta}^{t,x,u})+\int_{s}^{t+\delta}%
f(r,X_{r}^{t,x,u},Y_{r}^{u},Z_{r}^{u},u_{r})dr-\int_{s}^{t+\delta}Z_{r}%
^{u}dB_{r}-(K_{t+\delta}^{u}-K_{s}^{u}). \label{equa-1}%
\end{equation}

\begin{lemma}
\label{le-hjb-1}For each $s\in\lbrack t,t+\delta],$ we have%

\begin{equation}
Y_{s}^{1,u}=Y_{s}^{u}-\varphi(s,X_{s}^{t,x,u}). \label{auxilary-1-sol}%
\end{equation}

\end{lemma}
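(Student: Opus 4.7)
\textbf{Proof proposal for Lemma \ref{le-hjb-1}.} The plan is to apply the $G$-It\^o formula to the smooth functional $\varphi(s, X_s^{t,x,u})$, subtract it from equation \eqref{equa-1}, recognize that the resulting BSDE is exactly \eqref{auxilary-1}, and conclude by uniqueness of solutions to $G$-BSDEs.

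First I would compute, using the $G$-It\^o formula (with $h_{ij}=0$ by the simplifying assumption),
\begin{align*}
d\varphi(s,X_s^{t,x,u}) &= \partial_s\varphi(s,X_s^{t,x,u})\,ds + \langle\partial_x\varphi(s,X_s^{t,x,u}),b(s,X_s^{t,x,u},u_s)\rangle\,ds\\
&\quad + \langle\partial_x\varphi(s,X_s^{t,x,u}),\sigma(s,X_s^{t,x,u},u_s)\,dB_s\rangle\\
&\quad + \tfrac{1}{2}\langle\partial_{xx}^2\varphi(s,X_s^{t,x,u})\sigma_i(s,X_s^{t,x,u},u_s),\sigma_j(s,X_s^{t,x,u},u_s)\rangle\,d\langle B^i,B^j\rangle_s.
\end{align*}
The bracketed coefficient of $d\langle B^i,B^j\rangle_s$ is precisely $F_2^{ij}(s,X_s^{t,x,u},u_s)$ as defined in \eqref{EQS-1}.

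Next, set $\tilde Y_s := Y_s^u - \varphi(s,X_s^{t,x,u})$, $\tilde Z_s := Z_s^u - \sigma^T(s,X_s^{t,x,u},u_s)\partial_x\varphi(s,X_s^{t,x,u})$, and $\tilde K_s := K_s^u$. Subtracting the It\^o expansion above from \eqref{equa-1} and collecting the $ds$, $d\langle B^i,B^j\rangle_s$, $dB_s$, and $dK$ terms yields
\[
-d\tilde Y_s = \bigl[\partial_s\varphi + \langle\partial_x\varphi,b\rangle + f(s,X_s^{t,x,u},\tilde Y_s+\varphi, \tilde Z_s+\sigma^T\partial_x\varphi, u_s)\bigr]ds + F_2^{ij}\,d\langle B^i,B^j\rangle_s - \tilde Z_s\,dB_s - d\tilde K_s,
\]
where the driver in brackets is, by \eqref{EQS-1}, exactly $F_1(s,X_s^{t,x,u},\tilde Y_s,\tilde Z_s,u_s)$. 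The terminal condition is $\tilde Y_{t+\delta} = \varphi(t+\delta,X_{t+\delta}^{t,x,u}) - \varphi(t+\delta,X_{t+\delta}^{t,x,u}) = 0$, matching $Y_{t+\delta}^{1,u}=0$.

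Thus $(\tilde Y,\tilde Z,\tilde K)$ solves the same $G$-BSDE \eqref{auxilary-1} on $[t,t+\delta]$ as $(Y^{1,u},Z^{1,u},K^{1,u})$. To invoke uniqueness one needs to check that $(\tilde Y,\tilde Z,\tilde K)$ lies in the correct solution spaces: $\tilde Y \in S_G^2$, $\tilde Z \in M_G^2$, and $\tilde K$ is a decreasing $G$-martingale with $\tilde K_{t+\delta}\in L_G^2(\Omega_{t+\delta})$. Since $\varphi\in C_{b,Lip}^{2,3}$ has bounded derivatives, the processes $\varphi(\cdot,X^{t,x,u})$ and $\sigma^T\partial_x\varphi(\cdot,X^{t,x,u})$ are in $S_G^2$ and $M_G^2$ respectively by the regularity of $X^{t,x,u}$, so $(\tilde Y,\tilde Z)$ has the right integrability; and $\tilde K = K^u$ is already a decreasing $G$-martingale with the required integrability by hypothesis. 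The main subtlety---really the only place where care is needed---is the verification of these space memberships so that the uniqueness result for $G$-BSDEs (Theorem on equation \eqref{state-2}) applies. Once that is in hand, uniqueness gives $\tilde Y_s = Y_s^{1,u}$ for all $s\in[t,t+\delta]$, which is exactly \eqref{auxilary-1-sol}.
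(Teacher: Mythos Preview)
Your proposal is correct and follows essentially the same approach as the paper: apply the $G$-It\^o formula to $\varphi(s,X_s^{t,x,u})$, subtract from \eqref{equa-1}, and identify the result with the BSDE \eqref{auxilary-1}. The paper's argument is terser---it simply asserts $d(Y_s^u-\varphi(s,X_s^{t,x,u}))=dY_s^{1,u}$ and matches terminal conditions---whereas you spell out the candidate triple $(\tilde Y,\tilde Z,\tilde K)$ and explicitly invoke uniqueness of $G$-BSDE solutions, which is a welcome bit of extra care.
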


\begin{proof}
Applying It\^{o}'s formula to $\varphi(s,X_{s}^{t,x,u})$, we have%
\[
d(Y_{s}^{u}-\varphi(s,X_{s}^{t,x,u}))=dY_{s}^{1,u}.
\]
Since $Y_{t+\delta}^{u}-\varphi(t+\delta,X_{t+\delta}^{t,x,u})=Y_{t+\delta
}^{1,u}=0,$ we obtain%
\[
Y_{s}^{1,u}=Y_{s}^{u}-\varphi(s,X_{s}^{t,x,u}),\text{ \ \ }\forall s\in\lbrack
t,t+\delta].
\]
The proof is completed.
\end{proof}

Consider the G-BSDE: $\forall s\in\lbrack t,t+\delta],$%

\begin{equation}%
\begin{array}
[c]{rl}%
Y_{s}^{2,u}= & \int_{s}^{t+\delta}F_{1}(r,x,Y_{r}^{2,u},Z_{r}^{2,u}%
,u_{r})dr+\int_{s}^{t+\delta}F_{2}^{ij}(r,x,u_{r})d\langle B^{i},B^{j}%
\rangle_{r}\\
& -\int_{s}^{t+\delta}Z_{r}^{2,u}dB_{r}-(K_{t+\delta}^{2,u}-K_{s}^{2,u}).
\end{array}
\label{auxilary-2}%
\end{equation}

We have the following estimates.

\begin{lemma}
\label{le-hjb-2}For each $u(\cdot)\in\mathcal{U}^{t}[t,t+\delta]$, we have
\begin{equation}
\mid Y_{t}^{1,u}-Y_{t}^{2,u}\mid\leq L_{4}\delta^{3/2}, \label{auxilary-ine}%
\end{equation}
where $L_{4}$ is a positive constant dependent on $x$ and independent of
$u(\cdot)$.
\end{lemma}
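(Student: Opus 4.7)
\textbf{Proof plan for Lemma \ref{le-hjb-2}.} The plan is to apply a standard $G$-BSDE a priori estimate to the difference of the two equations (\ref{auxilary-1}) and (\ref{auxilary-2}). Both $G$-BSDEs are posed on the short interval $[t,t+\delta]$, they have the same terminal value $0$, and they differ only in that the drivers of the first are evaluated along $X_r^{t,x,u}$ while those of the second are evaluated at the frozen point $x$. Because the terminal values agree, the a priori estimate (in the form of Theorem \ref{BSDE-est}, or equivalently Proposition 5.1 of \cite{HJPS1}) gives
\[
|Y_t^{1,u}-Y_t^{2,u}|^{2}\leq \bar C\,\mathbb{\hat E}_t\!\left[\Bigl(\int_{t}^{t+\delta}\tilde F_r\,dr\Bigr)^{2}\right],
\]
where
\[
\tilde F_r=\bigl|F_{1}(r,X_r^{t,x,u},Y_r^{2,u},Z_r^{2,u},u_r)-F_{1}(r,x,Y_r^{2,u},Z_r^{2,u},u_r)\bigr|+\sum_{i,j}\bigl|F_{2}^{ij}(r,X_r^{t,x,u},u_r)-F_{2}^{ij}(r,x,u_r)\bigr|.
\]

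The next step is to check that $F_{1}(s,\cdot,y,z,u)$ and $F_{2}^{ij}(s,\cdot,u)$ are Lipschitz continuous in $x$ with a constant $L$ independent of $(s,y,z,u)$. Using $\varphi\in C^{2,3}_{b,Lip}([0,T]\times\mathbb{R}^{n})$, together with the Lipschitz hypotheses (A1) on $b$, $\sigma$ and $f$, each of the three summands in the definition \eqref{EQS-1} of $F_{1}$ is Lipschitz in $x$; the crucial point is that in the argument $(y+\varphi(s,x),\,z+\sigma^{T}(s,x,u)\partial_{x}\varphi(s,x))$ of $f$ the $y,z$-dependence drops out when we take the difference in $x$, so the Lipschitz constant $L$ depends only on $C$, $U$, $T$ and the $C^{2,3}_{b,Lip}$-norm of $\varphi$. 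Similarly $F_{2}^{ij}(s,x,u)=\tfrac12\langle\partial_{xx}^{2}\varphi(s,x)\sigma_{i},\sigma_{j}\rangle$ is Lipschitz in $x$. Consequently $\tilde F_r\leq L\,|X_r^{t,x,u}-x|$.

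Once this is established, the estimate collapses by Cauchy--Schwarz and the flow estimate of Theorem \ref{SDE-est}:
\[
\Bigl(\int_{t}^{t+\delta}\tilde F_r\,dr\Bigr)^{2}\leq \delta L^{2}\int_{t}^{t+\delta}|X_r^{t,x,u}-x|^{2}\,dr\leq \delta^{2}L^{2}\sup_{r\in[t,t+\delta]}|X_r^{t,x,u}-x|^{2},
\]
and Theorem \ref{SDE-est} with $p=2$ gives $\mathbb{\hat E}_t[\sup_{r\in[t,t+\delta]}|X_r^{t,x,u}-x|^{2}]\leq \bar C(1+|x|^{2})\delta$. Combining the inequalities yields $|Y_t^{1,u}-Y_t^{2,u}|^{2}\leq \bar C' L^{2}(1+|x|^{2})\,\delta^{3}$; taking the square root and setting $L_{4}:=(\bar C' L^{2}(1+|x|^{2}))^{1/2}$ gives the claim, with $L_{4}$ depending on $x$ but not on the control $u(\cdot)$.

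The only nontrivial point is step~(2)--(3): one must be sure the Lipschitz constant of $F_{1}$ in $x$ does not depend on $(y,z)$ so that it can be pulled outside the conditional $G$-expectation and applied uniformly along the solution $(Y^{2,u},Z^{2,u})$. This is immediate from the additive structure of the $\varphi$-perturbations inside $f$ together with (A1), so no further work is required beyond bookkeeping the constants.
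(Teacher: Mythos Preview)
Your overall strategy matches the paper's proof exactly: apply the $G$-BSDE a~priori estimate (Proposition~5.1 in \cite{HJPS1}) to the pair \eqref{auxilary-1}--\eqref{auxilary-2}, bound the resulting driver difference $\tilde F_r$ in terms of $|X_r^{t,x,u}-x|$, and close with the SDE moment bound of Theorem~\ref{SDE-est}.

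There is, however, a gap in your Lipschitz step. You assert that $F_1(s,\cdot,y,z,u)$ and $F_2^{ij}(s,\cdot,u)$ are globally Lipschitz in $x$ with a constant $L$ depending only on $C,U,T$ and the $C^{2,3}_{b,Lip}$-norm of $\varphi$. Under (A1)--(A2) the coefficients $b$ and $\sigma$ are Lipschitz but \emph{not} bounded; they have linear growth. Since $F_2^{ij}(s,x,u)=\tfrac12\langle\partial_{xx}^2\varphi(s,x)\sigma_i(s,x,u),\sigma_j(s,x,u)\rangle$ is quadratic in $\sigma$, and since $F_1$ contains the products $\langle b,\partial_x\varphi\rangle$ and $\sigma^{T}\partial_x\varphi$, these functions are only \emph{locally} Lipschitz in $x$, with a modulus that grows with the evaluation point. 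Consequently the clean bound $\tilde F_r\le L|X_r^{t,x,u}-x|$ with $L$ independent of $X_r^{t,x,u}$ is not available, and you cannot simply pull $L$ outside the expectation.

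The paper fixes exactly this: writing $|X_r^{t,x,u}|\le |x|+|X_r^{t,x,u}-x|$ one obtains
\[
\tilde F_r\le C_2\bigl(|X_r^{t,x,u}-x|+|X_r^{t,x,u}-x|^{2}\bigr),
\]
with $C_2$ depending on $x$ but not on $u$, and then invokes Theorem~\ref{SDE-est} for general $p\ge2$ (in particular $p=4$) to control the quadratic term. After this correction, H\"older's inequality gives $|Y_t^{1,u}-Y_t^{2,u}|\le L_4\delta^{3/2}$ just as you claim. So your plan is right; only the order of the polynomial bound on $\tilde F_r$ and the moment exponent used from Theorem~\ref{SDE-est} need adjusting.
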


\begin{proof}
By Proposition 5.1 in \cite{HJPS1}, there exists a constant $C_{1}>0$
depending on $T$, $G$ and $C$ such that for any $u(\cdot)\in\mathcal{U}%
^{t}[t,t+\delta]$,%
\[
|Y_{t}^{1,u}-Y_{t}^{2,u}|^{2}\leq C_{1}\mathbb{\hat{E}}[(\int_{t}^{t+\delta
}\hat{F}_{r}dr)^{2}],
\]
where
\begin{align*}
\hat{F}_{r}  &  =|F_{1}(r,X_{r}^{t,x,u},Y_{r}^{2,u},Z_{r}^{2,u},u_{r}%
)-F_{1}(r,x,Y_{r}^{2,u},Z_{r}^{2,u},u_{r})|\\
&  +\sum_{i,j=1}^{d}|F_{2}^{ij}(r,X_{r}^{t,x,u},u_{r})-F_{2}^{ij}(r,x,u_{r})|.
\end{align*}
Note that $\varphi\in C_{b,Lip}^{2,3}([0,T]\times\mathbb{R}^{n})$, it is easy
to verify that
\[
\hat{F}_{r}\leq C_{2}(|X_{r}^{t,x,u}-x|+|X_{r}^{t,x,u}-x|^{2}),
\]
where $C_{2}$ is dependent on $x$ and\ independent of $u(\cdot)$. By Theorem
\ref{SDE-est}, we can obtain that for any $p\geq2$,
\[
\mathbb{\hat{E}}[\underset{r\in\lbrack t,t+\delta]}{\sup}\mid X_{r}%
^{t,x,u}-x\mid^{p}]\leq C_{3}(1+|x|^{p})\delta^{p/2},
\]
where $C_{3}$ is independent of $u(\cdot)$. Then by H\"{o}lder's inequality we
can deduce that $|Y_{t}^{1,u}-Y_{t}^{2,u}|\leq L_{4}\delta^{3/2}$, where
$L_{4}$ is dependent on $x$ and\ independent of $u(\cdot)$. This completes the proof.
\end{proof}

Now we compute $\underset{u(\cdot)\in\mathcal{U}^{t}[t,t+\delta]}{\inf}%
Y_{t}^{2,u}.$

\begin{lemma}
\label{Lem-auxilary-3}We have
\[
\underset{u(\cdot)\in\mathcal{U}^{t}[t,t+\delta]}{\inf}\text{ }Y_{t}%
^{2,u}=Y_{t}^{0},
\]
where $Y^{0}$ is the solution of the following ordinary differential equation%
\begin{equation}
-dY_{s}^{0}=F_{0}(s,x,Y_{s}^{0},0)ds,\text{ \ \ }Y_{t+\delta}^{0}=0,\text{
}s\in\lbrack t,t+\delta] \label{auxilary-3}%
\end{equation}
and%
\[%
\begin{array}
[c]{rl}%
F_{0}(s,x,y,z):= & \underset{u\in U}{\inf}[F_{1}(s,x,y,z,u)+2G(F_{2}(s,x,u))].
\end{array}
\]

\end{lemma}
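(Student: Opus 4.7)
The plan is to prove the two inequalities $\inf_u Y_t^{2,u}\le Y_t^0$ and $\inf_u Y_t^{2,u}\ge Y_t^0$ separately, leveraging the decomposition of $F_0$ as an infimum and the standard $G$-BSDE linearization technique. Throughout, I will write $\Delta Y_s:=Y_s^{2,u}-Y_s^0$ and $\Delta Z_s:=Z_s^{2,u}$ (the control $u$ being clear from context).

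For the upper bound, fix $\varepsilon>0$. Using the continuity of $F_1$ and $F_2$ in $u$ and the compactness of $U$, I would construct a deterministic step control $u^{\varepsilon}\in\mathbb{U}^{t}[t,t+\delta]$ (obtained by a measurable selection on a fine partition of $[t,t+\delta]$) satisfying
\[
F_{1}(s,x,Y_s^{0},0,u_s^{\varepsilon})+2G(F_{2}(s,x,u_s^{\varepsilon}))\le F_{0}(s,x,Y_s^{0},0)+\varepsilon,\quad s\in[t,t+\delta].
\]
Applying the linearization argument below with the reversed inequality, together with the $G$-BSDE stability estimate (Proposition 5.1 in \cite{HJPS1}) and the Lipschitz continuity of $F_{1}$, yields $Y_{t}^{2,u^{\varepsilon}}-Y_{t}^{0}\le C\varepsilon\delta$, so sending $\varepsilon\downarrow 0$ gives the upper bound.

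For the lower bound, fix any $u\in\mathcal{U}^{t}[t,t+\delta]$. Subtracting \eqref{auxilary-3} from \eqref{auxilary-2}, Lipschitz-linearizing $F_1$ in $(y,z)$ as $F_{1}(s,x,Y_s^{2,u},Z_s^{2,u},u_s)-F_{1}(s,x,Y_s^{0},0,u_s)=\alpha_s\Delta Y_s+\langle\beta_s,\Delta Z_s\rangle$ with bounded $\alpha,\beta$, and using the defining inequality $F_{1}(s,x,Y_s^{0},0,u_s)-F_{0}(s,x,Y_s^{0},0)\ge -2G(F_{2}(s,x,u_s))$, I arrive at
\[
-d\Delta Y_s\ge [\alpha_s\Delta Y_s+\langle\beta_s,\Delta Z_s\rangle]\,ds-\Delta Z_s\,dB_s+dM_s^{u}-dK_s^{2,u},
\]
where $M_s^{u}:=\int_t^{s}F_{2}^{ij}(r,x,u_r)\,d\langle B^{i},B^{j}\rangle_r-\int_t^{s}2G(F_{2}(r,x,u_r))\,dr$ is a decreasing $G$-martingale with $M_t^{u}=0$ (the standard identity $\hat{\mathbb{E}}_s[\int_s^{r}F_{2}^{ij}d\langle B^{i},B^{j}\rangle]=\int_s^{r}2G(F_{2})\,dr$). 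The plan is then to take $\hat{\mathbb{E}}_t$ on both sides, use that $\int\Delta Z\,dB$ is a symmetric $G$-martingale (so it can be removed cleanly), invoke Lemma \ref{le-hjb-4} to discard the combined decreasing-$G$-martingale contribution, and close the estimate via a Gr\"onwall-type argument in the $G$-framework, producing $\Delta Y_t\ge 0$.

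The hard part will be precisely the handling of the two decreasing $G$-martingale terms $M^{u}$ and $K^{2,u}$: they enter with opposite signs (the first is a non-positive increment, the second gives a non-negative contribution $-K_{t+\delta}^{2,u}\ge 0$), and their sum is \emph{not} itself a decreasing $G$-martingale, so naive sublinearity of $\hat{\mathbb{E}}_t$ is too lossy—one would lose a term of order $\delta$ and be unable to conclude $\Delta Y_t\ge 0$. This is exactly the obstacle flagged in the introduction, and this is where Lemma \ref{le-hjb-4} is needed: it supplies the property of decreasing $G$-martingales that lets us absorb/control the $M^{u}$--$K^{2,u}$ interaction under $\hat{\mathbb{E}}_t$ without slack. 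Once that step is in place, the rest of the argument reduces to the classical linearization-plus-Gr\"onwall template for BSDE comparison, now executed in the $G$-framework.
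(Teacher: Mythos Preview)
Your upper bound is fine in spirit, though more indirect than needed: for any \emph{deterministic} control $u$, the $G$-BSDE \eqref{auxilary-2} has explicit solution $Z^{2,u}=0$, $K^{2,u}_s=\int_t^s F_2^{ij}(r,x,u_r)\,d\langle B^i,B^j\rangle_r-\int_t^s 2G(F_2(r,x,u_r))\,dr$, and $Y^{2,u}$ solves the ODE with driver $F_1(s,x,\cdot,0,u_s)+2G(F_2(s,x,u_s))$. Then $Y_t^0=\inf_{u\in\mathcal U_1}Y_t^{2,u}$ is a classical ODE comparison, no $\varepsilon$-selection or stability estimate required.

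The lower bound, however, has a genuine gap. You correctly isolate the difficulty (the two decreasing $G$-martingales $M^u$ and $K^{2,u}$ entering with opposite signs) and correctly name Lemma~\ref{le-hjb-4} as the tool, but your plan misreads what that lemma delivers. Lemma~\ref{le-hjb-4} does \emph{not} give a property ``under $\hat{\mathbb E}_t$'' that lets you absorb the $M^u$--$K^{2,u}$ interaction inside a $G$-framework Gr\"onwall argument; it produces a single probability $Q\in\mathcal P$ under which the decreasing $G$-martingale $M^u$ vanishes identically. Staying with $\hat{\mathbb E}_t$ and sublinear estimates, as you propose, will still lose an $O(\delta)$ term---exactly the slack you yourself warn about---and there is no way to recover from that by invoking Lemma~\ref{le-hjb-4} post hoc.

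The paper's move is to \emph{leave} the $G$-framework at this point: apply Lemma~\ref{le-hjb-4} to $M^u$ to get $Q\in\mathcal P$ with $M^u_s=0$ $Q$-a.s.\ for all $s\in[t,t+\delta]$. Under this fixed $Q$, equation \eqref{auxilary-2} becomes a \emph{classical} BSDE with generator $F_1(r,x,\cdot,\cdot,u_r)+2G(F_2(r,x,u_r))\ge F_0(r,x,\cdot,\cdot)$ plus the increasing process $-K^{2,u}$, while $Y^0$ (being deterministic) solves the classical BSDE with generator $F_0$ and no increasing term. The ordinary comparison theorem for classical BSDEs under $Q$ then yields $Y_t^{2,u}\ge Y_t^0$ directly. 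No linearization of the difference, no Gr\"onwall in the $G$-sense---the switch to a single $Q$ is precisely what makes both martingale terms harmless simultaneously.
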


In order to prove Lemma \ref{Lem-auxilary-3}, we need the following property
of the decreasing $G$-martingale.

\begin{lemma}
\label{le-hjb-4}Suppose that $(M_{s})_{t\leq s\leq t+\delta}$ is a decreasing
$G$-martingale. Then there exists a $Q\in\mathcal{P}$ such that%
\[
M_{t+\delta}=M_{t},\;Q-\text{a.s.}.
\]

\end{lemma}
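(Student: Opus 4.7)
The plan is to combine the $G$-martingale identity with the tower property to produce a $G$-expectation that vanishes, then translate this into a statement about the representing family $\mathcal{P}$ and use weak compactness to attain the optimum.

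Concretely, set $X := M_t - M_{t+\delta}$. Since $M$ is decreasing q.s.\ we have $X \geq 0$ q.s. Because $(M_s)$ is a $G$-martingale, $\mathbb{\hat{E}}_t[M_{t+\delta}] = M_t$, and since $M_t$ is $\mathcal{F}_t$-measurable, the standard conditional $G$-expectation rules give $\mathbb{\hat{E}}_t[M_{t+\delta} - M_t] = 0$. Applying $\mathbb{\hat{E}}[\cdot]$ and the tower property then yields
\[
\mathbb{\hat{E}}[-X] \;=\; \mathbb{\hat{E}}[M_{t+\delta} - M_t] \;=\; \mathbb{\hat{E}}\big[\mathbb{\hat{E}}_t[M_{t+\delta} - M_t]\big] \;=\; 0.
\]
Invoking Theorem \ref{the2.7 copy(1)} this rewrites as
\[
0 \;=\; \sup_{P \in \mathcal{P}} E_P[-X] \;=\; -\inf_{P \in \mathcal{P}} E_P[X].
\]

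Next I would argue that the infimum is attained. Since $K_{t+\delta} \in L_G^2(\Omega_{t+\delta})$ for a decreasing $G$-martingale, $M_t, M_{t+\delta} \in L_G^2$, hence $X \in L_G^2(\Omega_{t+\delta}) \subset L_G^1(\Omega_{t+\delta})$. The family $\mathcal{P}$ is weakly compact, and the map $P \mapsto E_P[X]$ is lower semicontinuous on $\mathcal{P}$ (equivalently $P \mapsto E_P[-X]$ is upper semicontinuous): this follows by approximating $X$ in the $L_G^1$-norm by bounded Lipschitz cylinder functionals, for which the map is weakly continuous, together with the uniform $L^2$-integrability on $\mathcal{P}$ coming from $X \in L_G^2$. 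Hence there exists $Q \in \mathcal{P}$ realizing the infimum: $E_Q[X] = 0$.

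Finally, since $X \geq 0$ $Q$-a.s.\ (because $X \geq 0$ q.s.) and $E_Q[X] = 0$, we conclude $X = 0$ $Q$-a.s., i.e., $M_{t+\delta} = M_t$ $Q$-a.s., which is the required statement. The main obstacle in the above plan is the attainment step: the identity $\inf_{P\in\mathcal{P}} E_P[X] = 0$ does not by itself produce a minimizer, and one must invoke the semicontinuity/weak compactness argument in the $G$-framework (essentially the same machinery underlying the representation in Theorem \ref{the2.7 copy(1)}, as developed in \cite{DHP11}). Everything else, including the vanishing of $\mathbb{\hat{E}}[-X]$, reduces to routine manipulations with conditional $G$-expectations.
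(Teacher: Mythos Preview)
Your proposal is correct and follows essentially the same approach as the paper: establish $\mathbb{\hat{E}}[M_{t+\delta}-M_t]=0$ via the martingale property and tower rule, rewrite this as a supremum over $\mathcal{P}$, and then use weak compactness together with the continuity result from \cite{DHP11} (Lemma~29 there) to produce a $Q\in\mathcal{P}$ attaining the value $0$. The paper carries out the attainment step by extracting an approximating sequence $Q_k$ and passing to a weakly convergent subsequence, which is exactly the concrete version of the semicontinuity argument you outline.
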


\begin{proof}
By the representation of $G$-expectation, we know that%
\[
\mathbb{\hat{E}}[M_{t+\delta}-M_{t}]=\underset{P\in\mathcal{P}}{\sup}%
E_{P}[M_{t+\delta}-M_{t}].\;
\]
Thus there exist $Q_{k}\in\mathcal{P}$, $k=1,2,...,$ such that%
\[
E_{Q_{k}}[M_{t+\delta}-M_{t}]\uparrow\mathbb{\hat{E}}[M_{t+\delta}%
-M_{t}]=\mathbb{\hat{E}}[\mathbb{\hat{E}}_{t}[M_{t+\delta}]-M_{t}]=0.
\]
Since $\mathcal{P}$ is weakly compact, there exist $Q\in\mathcal{P}$ and a
subsequence $(Q_{k_{i}})$ of $(Q_{k})$ such that $Q_{k_{i}}$ converges weakly
to $Q$. By Lemma 29 in \cite{DHP11}, then we get%
\[
E_{Q}[M_{t+\delta}-M_{t}]=\lim_{i\rightarrow\infty}E_{Q_{k_{i}}}[M_{t+\delta
}-M_{t}]=0.
\]
Note that $M_{t+\delta}-M_{t}\leq0$, q.s.. Thus, we obtain that%
\[
M_{t+\delta}=M_{t},\;Q-\text{a.s.}.
\]
This completes the proof.
\end{proof}

Now we give the proof of Lemma \ref{Lem-auxilary-3}.

\begin{proof}
(1) We first prove that for any $u(\cdot)\in\mathcal{U}^{t}[t,t+\delta]$,%
\[
Y_{t}^{2,u}\geq Y_{t}^{0}.
\]

Note that%
\[%
\begin{array}
[c]{rl}%
Y_{s}^{2,u}= & \int_{s}^{t+\delta}[F_{1}(r,x,Y_{r}^{2,u},Z_{r}^{2,u}%
,u_{r})+2G(F_{2}(r,x,u_{r}))]dr\\
& +[\int_{s}^{t+\delta}F_{2}^{ij}(r,x,u_{r})d\langle B^{i},B^{j}\rangle
_{r}-\int_{s}^{t+\delta}2G(F_{2}(r,x,u_{r}))dr]\\
& -\int_{s}^{t+\delta}Z_{r}^{2,u}dB_{r}-(K_{t+\delta}^{2,u}-K_{s}%
^{2,u}),\text{ q.s.}.
\end{array}
\]
It is easy to verify that
\[
M_{s}=\int_{t}^{s}F_{2}^{ij}(r,x,u_{r})d\langle B^{i},B^{j}\rangle_{r}%
-\int_{t}^{s}2G(F_{2}(r,x,u_{r}))dr
\]
is a decreasing $G$-martingale.

By Lemma \ref{le-hjb-4}, there exists a $Q\in\mathcal{P}$ such that for
$s\in\lbrack t,t+\delta]$,%
\[
M_{s}=0,\;Q-\text{a.s.}.
\]
Then
\[%
\begin{array}
[c]{rl}%
Y_{s}^{2,u}= & \int_{s}^{t+\delta}[F_{1}(r,x,Y_{r}^{2,u},Z_{r}^{2,u}%
,u_{r})+2G(F_{2}(r,x,u_{r}))]dr\\
& -\int_{s}^{t+\delta}Z_{r}^{2,u}dB_{r}-(K_{t+\delta}^{2,u}-K_{s}%
^{2,u}),\;Q-\text{a.s.}.
\end{array}
\]

Consider the following BSDE: for $s\in\lbrack t,t+\delta],$
\[%
\begin{array}
[c]{cl}%
Y_{s}^{0}= & \int_{s}^{t+\delta}F_{0}(r,x,Y_{r}^{0},Z_{r}^{0})dr-\int%
_{s}^{t+\delta}Z_{r}^{0}dB_{r}-(K_{t+\delta}^{0}-K_{s}^{0}),\;Q-\text{a.s.}.
\end{array}
\]
Since $F_{0}$ is a deterministic function, we obtain that $Z_{s}^{0}%
=0,K_{s}^{0}=0$ and $Y_{s}^{0}$ is just the solution of equation
(\ref{auxilary-3}). Note that $(-K_{s}^{2,u})_{s\in\lbrack t,t+\delta]}$ is a
increasing process and $F_{1}(r,x,y,z,u_{r})+2G(F_{2}(r,x,u_{r}))\geq
F_{0}(r,x,y,z)$, then by the comparison theorem of classical BSDE (under the
reference probability measure $Q$), we deduce that
\[
Y_{t}^{2,u}\geq Y_{t}^{0}.
\]

(2) We denote the class of all deterministic controls in $\mathcal{U}%
^{t}[t,t+\delta]$ by $\mathcal{U}_{1}$. Then, for every $u(\cdot
)\in\mathcal{U}_{1},$ $Y^{2,u}$ is the solution of the following ordinary
differential equation:%
\[%
\begin{array}
[c]{l}%
-dY_{s}^{2,u}=[F_{1}(s,x,Y_{s}^{2,u},0,u_{s})+2G(F_{2}(s,x,u_{s}))]ds,\text{
\ \ }s\in\lbrack t,t+\delta],\\
Y_{t+\delta}^{2,u}=0.
\end{array}
\]
It is easy to check that%
\[
Y_{t}^{0}=\underset{u(\cdot)\in\mathcal{U}_{1}}{\inf}Y_{t}^{2,u}%
\geq\underset{u(\cdot)\in\mathcal{U}^{t}[t,t+\delta]}{\inf}\text{ }Y_{t}%
^{2,u}.
\]
This completes the proof.
\end{proof}

Finally we give the proof of Theorem \ref{viscosity}.

\begin{proof}
The uniqueness of viscosity solution of equation (\ref{hjb}) can be proved
similarly as in Theorem 6.1 in \cite{BL}, we only prove that $V$ is a
viscosity solution of equation (\ref{hjb}). By Lemmas \ref{le-dpp-4} and
\ref{Lem-dpp ine 2}, $V$ is a continuous functions on $[0,T]\times
\mathbb{R}^{n}$. We first prove that $V$ is the subsolution of (\ref{hjb}).

Given $t\leq T$ and $x\in\mathbb{R}^{n}$, suppose $\varphi\in C_{b,Lip}%
^{2,3}([0,T]\times\mathbb{R}^{n})$ such that $\varphi(t,x)=V(t,x)$ and
$\varphi\geq V$ on $[0,T]\times\mathbb{R}^{n}$. By Theorem\ \ref{the-dpp-2},
we have%
\[%
\begin{array}
[c]{rl}%
V(t,x)= & \underset{u(\cdot)\in\mathcal{U}^{t}[t,t+\delta]}{\inf}%
\mathbb{G}_{t,t+\delta}^{t,x,u}[V(t+\delta,X_{t+\delta}^{t,x,u})].
\end{array}
\]

Note that $\varphi\geq V$ on $[0,T]\times\mathbb{R}^{n}$. Then by comparison
theorem, we get
\[
\underset{u(\cdot)\in\mathcal{U}^{t}[t,t+\delta]}{\inf}\text{ }\{
\mathbb{G}_{t,t+\delta}^{t,x,u}[\varphi(t+\delta,X_{t+\delta}^{t,x,u}%
)]-\varphi(t,x)\} \geq0.
\]
By equality (\ref{auxilary-1-sol}), we have
\[
\underset{u(\cdot)\in\mathcal{U}^{t}[t,t+\delta]}{\inf}Y_{t}^{1,u}\geq0.
\]
By inequality (\ref{auxilary-ine}) and Lemma \ref{Lem-auxilary-3}, we get
\[
\underset{u(\cdot)\in\mathcal{U}^{t}[t,t+\delta]}{\inf}Y_{t}^{2,u}\geq
-L_{4}\delta^{3/2}%
\]
and%
\[
Y_{t}^{0}\geq-L_{4}\delta^{3/2}.
\]
Thus%
\[
-L_{4}\delta^{1/2}\leq\delta^{-1}Y_{t}^{0}=\delta^{-1}\int_{t}^{t+\delta}%
F_{0}(r,x,Y_{r}^{0},0)dr.
\]
Letting $\delta\rightarrow0$, we get $F_{0}(t,x,0,0)=\underset{u\in U}{\inf
}(F_{1}(t,x,0,0,u)+G(F_{2}(t,x,u)))\geq0$, which implies that $V$ is a
subsolution of (\ref{hjb}). Using the same method, we can\ prove $V$ is the
supersolution of (\ref{hjb}).\quad This completes the proof.$\ $
\end{proof}

\bigskip

\textbf{Acknowledgments}

We would like to thank S. Peng for many helpful discussions.

\end{document}